\def\thm@space@setup{%
  \thm@preskip=\parskip \thm@postskip=0pt
}
\numberwithin{equation}{section}
\newtheorem{theo}{Theorem}[section]
\newtheorem{pro}[theo]{Proposition}
\newtheorem{lemma}[theo]{Lemma}
\newtheorem{cor}[theo]{Corollary}
\newtheorem{re}[theo]{Remark}
\newtheorem{assumption}[theo]{Assumption}
\theoremstyle{plain}
\theoremstyle{definition}
\newtheorem{de}[theo]{Definition}
\newcommand{\mbP}{\mathbb{P}}
\newcommand{\mbC}{\mathbb{C}}
\newcommand{\mbN}{\mathbb{N}}
\newcommand{\mbZ}{\mathbb{Z}}
\newcommand{\Pic}{\mathrm{Pic}}
\newcommand{\cE}{\mathcal E}
\title{\textbf{Dynamical degrees of birational maps\\ from indices of polynomials \\with respect to blow-ups I. \\General theory and 2D examples}}
\author{Jaume Alonso \and Yuri B.\ Suris \and Kangning Wei}
\date{\small Institut für Mathematik, MA 7-1\\ Technische Universität Berlin, Str.\ des 17.\ Juni, 10623 Berlin, Germany\\
E-mail: alonso@math.tu-berlin.de, suris@math.tu-berllin.de, wei@math.tu-berlin.de}
\begin{document}

\maketitle

\begin{abstract}
\noindent
In this paper we address the problem of computing $\deg(f^n)$, the degrees of iterates of a birational map $f:\mbP^N\dasharrow\mbP^N$. For this goal, we develop a method  based on two main ingredients: the factorization of a polynomial under pull-back of $f$, based on local indices of a polynomial associated to blow-ups used to resolve the contraction of hypersurfaces by $f$, and the propagation of these indices along orbits of $f$.\\[0.2cm]
\noindent
For maps admitting algebraically stable modifications $f_X:X\dasharrow X$, where $X$ is a variety obtained from $\mbP^N$ by a finite number of blow-ups, this method leads to an algorithm producing a finite system of recurrent equations relating the degrees and indices of iterated pull-backs of linear polynomials. We illustrate the method by three representative two-dimensional examples. It is actually applicable in any dimension, and we will provide a number of three-dimensional examples as a separate companion paper. 
\end{abstract}

\section{Introduction}

Birational maps belong to the most important objects of the theory of integrable systems. In fact, it was in the context of difference equations and the corresponding birational maps in which one of the most celebrated integrability criteria was formulated, namely the \emph{``singularity confinement''} \cite{Grammaticos_Ramani_1991}. Although it soon became clear that this criterium is neither necessary nor sufficient for integrability, its investigation and application has led to many important discoveries and developments. In particular, the whole area of discrete Painlev\'e equations and their geometric theory has been established \cite{Sakai2001, Grammaticos_Ramani_2004, Kajiwara_Noumi_Yamada_2017, Joshi2019}. A closely related line of research deals with the so called QRT maps and their relatives \cite{QRT1988, QRT1989, Tsuda2004, Duistermaat2010, Carstea_Takenawa_2012, Carstea_Takenawa_2019,Alonso_Suris_Wei_2022}.  An interpretation of the ``singularity confinement'' criterion in the language of projective and algebraic geometry \cite{Hietarinta_Viallet_1997, Hietarinta_Viallet_2000, Takenawa2001, Takenawa_2001b, Mase_Willox_Ramani_Grammaticos_2019}, the introduction of the notion of \emph{``algebraic entropy''}, and the establishment of the vanishing of algebraic entropy as a valid integrability criterium \cite{BellonViallet1999, Viallet_2008} are the further hallmarks on this path. 
 
It should be mentioned that one of the main sources of birational maps in the theory of integrable systems is the problem of \emph{integrable discretization} of continuous time integrable systems. A large number of interesting examples with distinguished geometric properties appeared in the recent years in the context of Kahan-Hirota-Kimura discretization. Such a discretization produces for any quadratic vector field on $\mathbb{R}^n$  a birational map of degree $n$ on $\mathbb{P}^n$, see \cite{HirotaKimura2000, Kahan1993, KimuraHirota2000}. The study of remarkable conservation properties of this class of birational maps, including their integrability under certain circumstances, was initiated in \cite{PPS2009, PPS2011, Celledoni1, Celledoni2}, and led to further findings.

Another line of research leading to the study of the same structures is the multi-dimensional complex dynamics. A representative though, of course, non-exhaustive list of relevant references includes \cite{FornaessSibony2, Diller1996, Bedford_1998, FornaessSibony1, DillerFavre2001, Bedford_2004, Bedford_Kim_2004, Bedford_Diller_2005, Bedford_Diller_2006, Bedford_Kim_2006, McMullen_2007, Bedford_Kim_2009, Bedford_Kim_2010, Bedford_2011, Diller2011, Bedford_Kim_2011, silverman_2012, Blanc_2013, Bedford_Kim_2014, Bedford_2015, Bedford_Diller_Kim_2015, Blanc_Cantat_2016, Deserti_2018, Bell_Diller_Jonsson_2020, Birkett2022}. 

An important quantity associated to a birational map $f:\mbP^N\dasharrow\mbP^N$ is the \textit{dynamical degree} $\lambda(f)$, defined as  $\lambda(f)=\lim_{n\to\infty} (\deg(f^n))^{1/n}$. Its logarithm $h(f)=\log(\lambda(f))$ is called the \textit{algebraic entropy} of $f$, and its vanishing belongs to the most important indicators of integrability for discrete systems. The growth rate of the sequence $\deg(f^n)$ is, thus, crucial for  understanding the dynamics. In the case of dimension $N=2$, one can actually establish a precise correspondence between the growth rate of $\deg(f^n)$ and the underlying geometry of an algebraic surface \cite{DillerFavre2001}. In particular, $\deg(f^n)\sim n^2$ corresponds to an automorphism of an algebraic surface preserving a fibration of elliptic curves.

Our goal in this paper is even more ambitious than the computation of $\lambda(f)$ or the growth rate of $\deg(f^n)$ for a given birational map $f$, namely, it is the \emph{exact computation} of all degrees $\deg(f^n)$. A general method for doing this is available in the literature quoted above, and consists in, first, lifting the map $f$ on $\mbP^N$ to a so called \emph{algebraically stable} modification $f_X$ on some variety $X$ obtained from $\mbP^N$ via a birational morphism $\pi:X\to \mbP^N$ (a composition of blow-up transformations), and, second, studying the induced linear action $f_X^*$ on the Picard group $\Pic(X)$ (the free abelian group generated by divisor classes on $X$). A great advantage of this method is a reduction of a complicated dynamical problem to a problem of linear algebra (studying powers of the matrix of $f_X^*$ relative to a suitable basis). 

The method we propose in the present paper is based on a detailed investigation of the main analytic phenomenon responsible for the fact that $\deg(f^n)\neq (\deg(f))^n$, namely the cancellation of common factors in the components of $f^n$. The most closely related method is that by Viallet \cite{Viallet_2015, Viallet_2019}. The main difference of his approach as opposed to ours is the following: he studies the peculiar factorization patterns in the components of $f^n$, which amounts to the study of iterated pull-backs of the \emph{coordinate hyperplanes}; for doing this, the interplay between the operations of pull-back and push-forward plays a crucial role. For us, on the contrary, the main role is played by the iterated pull-backs of \emph{generic hyperplanes}. Neither do we pay any special attention to the specific behavior of coordinate hyperplanes nor do we consider the push-forward operation.

Our method is based on the use of  local indices of polynomials associated to blowing-ups coming from resolution of the singularities of the birational map. The factorization phenomenon mentioned above is based on the following fact. For a birational map $f$ on $\mbP^N$ (given by its minimal lift $\tilde{f}$ to homogeneous coordinates, a polynomial map on $\mbC^{N+1}$) and for an irreducible homogeneous polynomial $P$ on $\mbC^{N+1}$, the pull-back $f^*P=P\circ \tilde f$ need not be irreducible anymore; rather, it can acquire certain factors of geometric origin:
$$
f^*P=\left(\prod_{i=1}^r K_i^{\nu_i(P)}\right)\widetilde P,
$$
where $\tilde P$ is irreducible and $K_i$ are minimal generating polynomials of the irreducible components of the critical set $\mathcal C(f)$, defined by the equation $\det d\tilde f=0$. Our first contribution is the exact computation of indices $\nu_i(P)$ for an arbitrary homogeneous polynomial $P$ (see Theorem \ref{theorem split off}). It is based on the (local) blow-up maps used to resolve the contraction of the divisors $\{K_i=0\}$ to the respective points $p_i$.

Based on this result, one defines inductively 
$$
P_{n+1}=\left(\prod_{i=1}^r K_i^{-\nu_i(P_n)}\right)(f^*P_n),
$$
then $\deg(f^n)=\deg(P_n)$ for a \emph{generic} homogeneous polynomial $P_0$ of degree 1. Thus, the task of computing $\deg(P_n)$ is reduced to the task of determining the recurrent relations which express $\deg(P_{n+1})$ and $\nu_i(P_{n+1})$ through the data of the polynomial $P_n$. This is achieved by further blow-ups along the orbits of the points $p_i$. If $f$ admits an algebraically stable modification $f_X$, the resulting system of recurrent relations is closed (finite). It is then related to (but does not necessarily coincide with) the analogous system describing the powers of the linear map $f_X^*$ on $\Pic(X)$. 

The paper is organized as follows. In Sections \ref{Sect rational} and \ref{Sect birational} we recall the basic notions and results related to rational and birational maps on $\mbP^N$, based mainly on \cite{Hudson1927, Diller1996, FornaessSibony2, Alberich}.
In Section \ref{sect index} we discuss the problem of factorization of polynomials under pull-backs, introduce our \textit{local index} of a polynomial associated to a blow-up, and prove the main Theorem \ref{theorem split off}. The propagation of the local indices under the birational map is studied in Section \ref{sect propagate}. The discussion here is not exhaustive but covers the most common and useful situations. These results are combined in Section \ref{section first algorithm} to an algorithm for computing the sequence of degrees $\deg(f^n)=\deg(P_n)$, which terminates if $f$ admits an algebraically stable modification $f_X:X\dasharrow X$. The more traditional algorithm based on the study of the action of $f_X^*$ on $\Pic(X)$ is presented in Section \ref{sect method Picard}, and the two methods are compared in Section \ref{sect two methods}. The last three Sections \ref{sect example 1}--\ref{sect linearizable} contain a detailed study of three two-dimensional examples, illustrating various aspects of our method and its realization for concrete maps. 
 
It should be mentioned that our method can be applied to birational maps in all dimensions. In a companion paper, we will provide examples in dimension $N=3$. Of course, in dimension $N=2$ the geometry is more thoroughly understood, which gives several additional insights. For instance, our local indices can be interpreted as linear combinations of intersection numbers between divisors on the surface. In the higher dimensional cases, such an interpretation is not yet available.


\section{Rational maps}
\label{Sect rational}

The general setting for our discussion will be rational maps of $\mathbb P^N$,
\begin{equation}\label{eq: rational map}
f: \mathbb{P}^N\dasharrow\mathbb{P}^N, \quad [x_0:x_1:\ldots:x_N]\mapsto[X_0:X_1:\ldots:X_N],
\end{equation}
where $X_k=X_k(x_0,\ldots,x_N)$, $k=0,\ldots,N$, are homogeneous polynomials of one and the same degree $d$ without a non-trivial common factor. The number $d$ is called the {\em degree} of $f$, denoted by $\deg(f)$. The polynomial map 
\begin{equation}\label{eq: polynomial map}
\tilde f: \mathbb C^{N+1}\to\mathbb C^{N+1}, \quad (x_0,x_1,\ldots,x_N)\mapsto (X_0,X_1,\ldots,X_N)
\end{equation}
will be called a {\em minimal lift} of $f$. It is defined up to a constant factor.
To each rational map we associate:
\begin{itemize}
\item the {\em indeterminacy set} $\mathcal I(f)$ consisting of the points $ [x_0:x_1:\ldots:x_N]\in \mathbb{P}^N$ for which $X_0=X_1=\ldots=X_N=0$;  and
\item the {\em critical set} $\mathcal C(f)$ consisting of the points $[x_0:x_1:\ldots:x_N]\in \mathbb{P}^N$ where $\det d\tilde f=0$; the latter equation of degree $(N+1)(d-1)$ defines a variety of codimension 1.
\end{itemize} 
The set of singular points $\mathcal I(f)$ has codimension at least 2. Otherwise, if $f$ were singular on a codimension 1 variety defined by a polynomial equation $K=0$, then the homogeneous polynomials $X_0,X_1,\ldots,X_N$ that define $f$ would be divisible by $K$, which is a contradiction.

Let $\tilde f^k=\tilde f\circ \tilde f\circ...\circ \tilde f$ ($k$ times). The components of this polynomial map may contain a nontrivial common factor: $\tilde f^k=K\cdot f^{[k]}$, where $K$ is a homogeneous polynomial and components of $f^{[k]}$ have no nontrivial common factor, so that $f^{[k]}$ is a minimal lift of the rational map $f^k$ on $\mathbb{P}^n$. We write

$$
f^{[k]}(x_0,x_1,\ldots,x_N)=\big(X_0^k, X_1^k,\ldots, X_N^k\big).
$$
Thus, $\deg(f^k)$ is the common degree of the homogeneous polynomials $X_0^k,X_1^k,\ldots,X_N^k$ which equals $d^k-\deg K \leq d^k$. An understanding and a control of such a drop of degree is achieved based on the following result.
\begin{pro}\label{deg lowering}
Let $f$ and $g$ be rational maps of $\,\mathbb {P}^N$ with minimal lifts $\tilde f$ and $\tilde g$. Then $\tilde f\circ \tilde g$ is not a minimal lift of $f\circ g$ and $\deg (f\circ g)<\deg f \cdot\deg g$ if and only if there exists a hypersurface $A$ such that $g(A)\subset \mathcal I(f)$. Such a hypersurface, if exists, has to be a subset of $\mathcal C(g)$.
\end{pro}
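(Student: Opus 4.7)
The plan is to handle the biconditional via two dual arguments and then dispatch $A\subset\mathcal C(g)$ by a rank/dimension count. Without loss of generality I would take $A$ irreducible with minimal defining polynomial $K$. Throughout, I would invoke the already-noted fact that $\mathcal I(g)$ has codimension $\ge 2$, so that $A\setminus\mathcal I(g)$ is Zariski dense in $A$; this is what lets me bypass the nuisance of points where $\tilde g$ vanishes.

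For the ``if'' direction, at every $x\in A\setminus\mathcal I(g)$ one has $\tilde g(x)\neq 0$ and $[\tilde g(x)]=g(x)\in\mathcal I(f)$ by hypothesis, so $\tilde f(\tilde g(x))=0$ componentwise. Hence every component of $\tilde f\circ\tilde g$ vanishes on a dense subset of the irreducible hypersurface $V(K)$; by the Nullstellensatz $K$ divides each component, so $\tilde f\circ\tilde g$ is not minimal and $\deg(f\circ g)<\deg f\cdot\deg g$. The converse reverses this: I would extract an irreducible common factor $K$ of the components of $\tilde f\circ\tilde g$, set $A:=V(K)$, and note that for generic $x\in A\setminus\mathcal I(g)$, $\tilde g(x)\neq 0$ together with $\tilde f(\tilde g(x))=0$ forces $g(x)\in\mathcal I(f)$; by density and closedness of $\mathcal I(f)$, $g(A)\subset\mathcal I(f)$.

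For the final claim $A\subset\mathcal C(g)$, the idea is that $g$ must collapse $A$ onto a set of strictly smaller dimension. Since $\mathcal I(f)$ has codimension $\ge 2$, we get $\dim g(A)\le N-2$; so at a smooth $x\in A$ chosen outside $\mathcal I(g)$ with $g(x)$ smooth on $\overline{g(A)}$, the restricted differential $dg_x|_{T_xA}$ maps the $(N-1)$-dimensional $T_xA$ into a tangent space of dimension $\le N-2$, hence has a nontrivial kernel; a fortiori $dg_x$ itself is not injective. Euler's identity $d\tilde g(x)\cdot x=\deg(g)\,\tilde g(x)$ pins down one distinguished direction in the source and target of $d\tilde g(x)$, yielding $\mathrm{rank}\,dg_x=\mathrm{rank}\,d\tilde g(x)-1$; so $\det d\tilde g(x)=0$, i.e.\ $x\in\mathcal C(g)$, and this extends to all of $A$ by closedness of $\mathcal C(g)$ and irreducibility of $A$. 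The main delicate step is precisely this passage between $d\tilde g$ on $\mbC^{N+1}$ and $dg$ on $\mbP^N$—ensuring that exactly one dimension is quotiented on each side via Euler's relation—rather than the algebraic divisibility, which is nearly formal.
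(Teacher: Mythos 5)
Your proposal is correct and follows essentially the same route as the paper: the biconditional is handled by the same divisibility argument (a common factor of the components of $\tilde f\circ\tilde g$ corresponds exactly to a hypersurface sent into $\mathcal I(f)$), and the final claim $A\subset\mathcal C(g)$ rests on the same dimension count, which you state in contrapositive form ($\dim g(A)\le N-2$ forces a kernel of the differential) where the paper argues directly that a non-critical point would make $g(A)$ contain a codimension-1 piece. Your explicit Euler-identity bookkeeping relating $\operatorname{rank} d\tilde g$ to $\operatorname{rank} dg$ is a correct elaboration of a step the paper leaves implicit.
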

\begin{proof} If the map $\tilde f\circ \tilde g\ $ is not a minimal lift of $f\circ g$ then there is a homogeneous polynomial $a$ which divides all $N+1$ components of $\tilde f\circ \tilde g$. Let $A=\{a=0\}\subset \mathbb {P}^N$ be the hypersurface defined by $a$. Then $\tilde f$ vanishes at the lift of $g(A)$ to $\mathbb C^{N+1}$, so that $g(A)\subset \mathcal I(f)$. Conversely, for any hypersurface $A$ with this property, $\tilde f\circ \tilde g$ vanishes on the lift of $A$ to $\mathbb C^{N+1}$, and all $N+1$ components of this map are divisible by a minimal defining polynomial of $A$. Finally, we show that a hypersurface with this property has to be a subset of $\mathcal C(g)$: indeed, if $A$ would contain non-critical point of $g$ then the image $g(A)$ would contain a piece of codimension 1 and therefore could not be contained in the variety $\mathcal I(f)$ of codimension 2. 
\end{proof}

A specialization to the most important case, namely when $g$ is an iterate of $f$, leads to the following definition and corollary.

\begin{de}\label{def deg lowering}
A hypersurface $A=\{a=0\}\subset \mathcal C(f)$ is called a  \emph{degree lowering hypersurface} if $f^k(A)\subset \mathcal I(f)$ for some $k\in\mathbb N$.
\end{de}

\begin{cor}
For a rational map $f$ of $\,\mbP^N$, we have $\deg(f^n)=(\deg f)^n$ for all $n\in \mathbb N$ if and only if it does not possess a degree lowering hypersurface. If such a hypersurface exists, then the dynamical degree of the map $f$, defined as
\begin{equation}\label{dyn deg}
\lambda(f):=\lim_{n\to\infty} (\deg(f^n))^{1/n},
\end{equation}
is strictly less than $\deg f$. 
\end{cor}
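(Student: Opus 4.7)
The plan is to reduce both directions of the equivalence to Proposition~\ref{deg lowering} applied to the decomposition $f^n=f\circ f^{n-1}$, and then to obtain the strict inequality $\lambda(f)<\deg f$ from submultiplicativity of the degree sequence.

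For the ``if'' direction of the equivalence (i.e.\ the contrapositive: existence of a degree lowering hypersurface forces a degree drop), suppose $A\subset\mathcal C(f)$ satisfies $f^k(A)\subset\mathcal I(f)$ for some $k\in\mathbb N$. Applying Proposition~\ref{deg lowering} with $g=f^k$ gives $\deg(f^{k+1})<\deg f\cdot\deg(f^k)$, which combined with the routine upper bound $\deg(f^k)\le(\deg f)^k$ (itself an induction based on the same proposition) yields $\deg(f^{k+1})<(\deg f)^{k+1}$.

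For the converse, assume $\deg(f^n)<(\deg f)^n$ for some $n$, and let $n\ge 2$ be minimal with this property. Then $\deg(f^{n-1})=(\deg f)^{n-1}$ and so $\deg(f^n)<\deg f\cdot\deg(f^{n-1})$. Proposition~\ref{deg lowering} applied to $f\circ f^{n-1}$ produces an irreducible hypersurface $A$ with $f^{n-1}(A)\subset\mathcal I(f)$. To upgrade $A$ to a hypersurface contained in $\mathcal C(f)$, I would take $j\in\{0,1,\dots,n-2\}$ maximal with the property that $A_j:=\overline{f^j(A)}$ is still of codimension one; such $j$ exists because $A_0=A$ is a hypersurface while $f^{n-1}(A)\subset\mathcal I(f)$ has codimension at least two. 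By maximality, the rational map $f$ contracts the irreducible hypersurface $A_j$ to a variety of strictly smaller dimension. At any non-critical, non-indeterminate point $p\in A_j$ the differential $d\tilde f_p$ has full rank, so $f$ would map a neighborhood of $p$ in $A_j$ to a subvariety of the same dimension, contradicting the contraction. Hence every point of $A_j$ outside $\mathcal I(f)$ lies in $\mathcal C(f)$, and irreducibility of $A_j$ together with $\mathrm{codim}\,\mathcal I(f)\ge 2$ forces $A_j\subset\mathcal C(f)$. With $k:=n-1-j\ge 1$ one has $f^k(A_j)\subset f^{n-1}(A)\subset\mathcal I(f)$, so $A_j$ is a degree lowering hypersurface.

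For the dynamical degree, iterating Proposition~\ref{deg lowering} gives the submultiplicative estimate $\deg(f^{m+n})\le\deg(f^m)\deg(f^n)$, so by Fekete's lemma the limit $\lambda(f)=\lim_{n\to\infty}(\deg(f^n))^{1/n}$ exists and equals $\inf_{n\ge 1}(\deg(f^n))^{1/n}$. The first part of the corollary then supplies some $n_0$ with $\deg(f^{n_0})<(\deg f)^{n_0}$, whence $\lambda(f)\le(\deg(f^{n_0}))^{1/n_0}<\deg f$.

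The most delicate step is the extraction of a hypersurface living in $\mathcal C(f)$ itself from the hypersurface supplied by the proposition, which a priori only controls the critical locus of the iterate $f^{n-1}$. The geometric content is that the last hypersurface in the forward orbit of $A$ before its image collapses into $\mathcal I(f)$ is necessarily contracted by a single application of $f$, hence must lie inside $\mathcal C(f)$ and not merely inside $\mathcal C(f^{n-1})$.
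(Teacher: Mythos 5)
Your proof is correct and follows the same route as the paper: both directions of the equivalence reduce to Proposition \ref{deg lowering} applied to $f\circ f^{k}$, and the bound $\lambda(f)<\deg f$ comes from submultiplicativity of $\deg(f^n)$ together with Fekete's lemma. The paper dispatches the first statement with a single sentence, whereas you carefully supply the one genuinely non-immediate step --- pushing the hypersurface produced by the proposition (which a priori lies only in $\mathcal C(f^{n-1})$) forward to the last codimension-one member of its orbit, which is then contracted by a single application of $f$ and hence lies in $\mathcal C(f)$, as the definition of a degree lowering hypersurface requires.
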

\begin{proof}
The first statement follows directly from Proposition \ref{deg lowering}. For the second one, set $d_n=\deg(f^n)$. Then $d_{n+m}\leq d_{n}d_{m}$ for any $n,m\in\mathbb N$, and as a consequence of Fekete lemma \cite{Fekete}, the limit in \eqref{dyn deg} always exists, cf. \cite{silverman_2012}. 
If there exists a hypersurface $A$ with $f^k(A)\subset\mathcal I(f)$ for some $k\in \mathbb N$, then by Proposition \ref{deg lowering} we have $d_{k+1}=\deg(f^{k+1})=\deg(f\circ f^k)<(\deg f)^{k+1}$, so that
$
(d_{k+1})^{1/(k+1)}<\deg f.
$
By induction, 
$(d_{(k+1)n})^{1/(k+1)n}\le (d_{k+1})^{1/(k+1)}.$
Therefore, $\lambda(f)\le (d_{k+1})^{1/(k+1)}<\deg f$.
\end{proof}


\section{Birational maps}
\label{Sect birational}

\begin{de}
A rational map $f_+:\mathbb{P}^N \dasharrow \mathbb{P}^N$ is called birational if there is a rational map $f_-:\mathbb{P}^N \dasharrow\mathbb{P}^N$ such that $f_-\circ f_+=id$ and $f_+\circ f_-=id$ away from some subvariety of codimension 1.
\end{de}
This means that
\begin{equation}\label{f+f-}
\tilde{f}_-\circ \tilde{f}_+=K_+\cdot id, \quad  \tilde{f}_+\circ \tilde{f}_-=K_-\cdot id, 
\end{equation}
where $K_+$ and $K_-$ are homogeneous polynomials. The degrees $d_+=\deg(f_+)$ and $d_-=\deg(f_-)$ need not coincide. Notice that $\deg(K_+)=\deg(K_-)=d_+d_--1$.

\begin{pro} \label{prop birational gen}\quad {\rm\cite[Proposition 3.3 and Corollary 3.6]{Diller1996}}

\begin{itemize}
\item[{\rm a)}] The image of $\mathcal C(f_+)\setminus \mathcal I(f_+)$ under the map $f_+$ belongs to $\mathcal I(f_-)$ (in particular, it is of codimension $\ge 2$).
\item[{\rm b)}] There holds $\mathcal I(f_+)\subset \mathcal C(f_+)$. Moreover, every irreducible component of $\mathcal C(f_+)$ contracted by $f_+$ to a point contains a codimension 2 subvariety from $\mathcal I(f_+)$. 
\item[{\rm c)}] The restriction $f_+:\mathbb{P}^N\setminus \mathcal{C}(f_+)\to \mathbb{P}^N\setminus\mathcal{C}(f_-)$ is biregular. 
\item[{\rm d)}] The polynomial $K_+$ is a (in general non-minimal) defining polynomial of $\mathcal C(f_+)$, so that 
$\mathcal C(f_+)=\{K_+=0\}$. 
\end{itemize}
\end{pro}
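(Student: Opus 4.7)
The plan is to prove (d) first, then (c), then (a), and finally (b), since each item feeds the next. The core algebraic input is a Jacobian computation for the polynomial map $F(x)=K_+(x)\,x$ on $\mbC^{N+1}$. The matrix determinant lemma applied to $dF(x)=K_+(x)I+x(\nabla K_+(x))^T$, combined with Euler's identity $\langle x,\nabla K_+(x)\rangle=(\deg K_+)K_+(x)$, gives
\begin{equation*}
\det dF(x)=(\deg K_++1)\,K_+(x)^{N+1}.
\end{equation*}
Differentiating the relation $\tilde f_-\circ\tilde f_+=K_+\cdot\mathrm{id}$ via the chain rule then yields the polynomial identity
\begin{equation*}
\big(\det d\tilde f_-\big)\!\circ\tilde f_+\;\cdot\;\det d\tilde f_+ \;=\; (\deg K_++1)\,K_+^{N+1}.
\end{equation*}
Reading off irreducible factors gives $\mathcal C(f_+)\subseteq\{K_+=0\}$. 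For the reverse containment, I would argue by contradiction: if $k$ is an irreducible factor of $K_+$ with $\{k=0\}\not\subset\mathcal C(f_+)$, then at a generic $p\in\{k=0\}$ the map $\tilde f_+$ is a local biholomorphism, so $\tilde f_+(\{k=0\})$ is a hypersurface; but the relation $\tilde f_-\circ\tilde f_+=K_+\cdot\mathrm{id}$ forces $\tilde f_-$ to vanish on it, placing this hypersurface inside the codimension-$\geq 2$ set (lifted from) $\mathcal I(f_-)$, a contradiction. This proves (d).

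For (c), the same identities immediately show that $f_-\circ f_+=\mathrm{id}$ on $\{K_+\neq 0\}=\mbP^N\setminus\mathcal C(f_+)$ as a regular morphism, and symmetrically for $f_+\circ f_-$. To see that $f_+$ actually maps $\mbP^N\setminus\mathcal C(f_+)$ into $\mbP^N\setminus\mathcal C(f_-)$, I would substitute $\tilde f_-(\tilde f_+(p))=K_+(p)\,p$ into $\tilde f_+\circ\tilde f_-=K_-\cdot\mathrm{id}$ and use homogeneity of $\tilde f_+$ of degree $d_+$ to extract $K_-(\tilde f_+(p))=K_+(p)^{d_+}$; combined with (d), this closes the biregularity. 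For (a), take $p\in\mathcal C(f_+)\setminus\mathcal I(f_+)$ and suppose for contradiction that $q=f_+(p)\notin\mathcal I(f_-)$. Then $f_+$ is regular at $p$, $f_-$ at $q$, so $f_-\circ f_+$ is a regular morphism on a neighborhood of $p$ that coincides with $\mathrm{id}$ on the dense open set $\{K_+\neq 0\}$ by (c); hence it equals $\mathrm{id}$ near $p$. Thus $f_+$ is locally biregular at $p$, which via Euler's identity (comparing the projective differential with $d\tilde f_+$ modulo the radial direction) forces $\det d\tilde f_+(p)\neq 0$, contradicting $p\in\mathcal C(f_+)$ by (d).

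Part (b) splits into two statements. The inclusion $\mathcal I(f_+)\subseteq\mathcal C(f_+)$ is immediate: if $\tilde f_+(p)=0$ and $d\tilde f_+(p)$ were invertible, then $\tilde f_+^{-1}(0)$ would be discrete near $p$; but by homogeneity of degree $d_+\geq 1$ the entire line $\mbC\cdot p$ sits inside $\tilde f_+^{-1}(0)$. The second clause---that any irreducible component $A$ of $\mathcal C(f_+)$ contracted by $f_+$ to a point contains a codimension-$2$ piece of $\mathcal I(f_+)$---I would handle by combining (a), which places the image point $q$ in $\mathcal I(f_-)$, with a dimension count on the graph $\Gamma_f\subset\mbP^N\times\mbP^N$: the fiber of the second projection $\pi_2$ over $q$ has dimension $\geq N-1$ and contains the strict transform of $A$, and the first projection $\pi_1$ then contracts this fiber down to the required codimension-$2$ subvariety inside $A\cap\mathcal I(f_+)$.

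The main obstacle is precisely this second clause of (b): the passage from the qualitative fact "$A$ is contracted to a single point of $\mathcal I(f_-)$" to the quantitative statement about a codimension-$2$ piece of $\mathcal I(f_+)$ living in $A$ cannot be handled by the polynomial manipulations that dispatch (a), (c), and (d); it inherently requires a resolution of $f_+$ (or the graph construction) and careful tracking of fiber dimensions, which is the only step that goes beyond direct algebra on the defining identities \eqref{f+f-}.
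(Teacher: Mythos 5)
The paper itself offers no proof of this proposition---it is quoted from \cite{Hudson1927, Diller1996, FornaessSibony2, Alberich}---so you are supplying an argument where the authors give none. Most of what you write is sound. The Jacobian identity $\bigl(\det d\tilde f_-\bigr)\circ\tilde f_+\cdot\det d\tilde f_+=(\deg K_++1)K_+^{N+1}$ is correct and gives $\mathcal C(f_+)\subseteq\{K_+=0\}$ cleanly; your contradiction argument for the reverse inclusion, the identity $K_-\circ\tilde f_+=K_+^{d_+}$ underlying (c), and the homogeneity argument for $\mathcal I(f_+)\subseteq\mathcal C(f_+)$ all check out. For (a) you take a detour: once (d) is known, the one-line proof is that for $\tilde p$ a lift of $p\in\mathcal C(f_+)\setminus\mathcal I(f_+)$ one has $\tilde f_-(\tilde f_+(\tilde p))=K_+(\tilde p)\,\tilde p=0$ while $\tilde f_+(\tilde p)\neq 0$, so $f_+(p)\in\mathcal I(f_-)$ directly; no local-biregularity or Euler-identity argument is needed.

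The genuine gap is the second clause of (b), and your proposed fix does not work as described. The strict transform $\widetilde A$ of $A$ inside the graph $\Gamma_f$ is mapped by $\pi_1$ \emph{birationally onto $A$} (it is, by definition, the closure of $\{(p,f_+(p)):p\in A\setminus\mathcal I(f_+)\}$), so $\pi_1$ does not ``contract the fiber $\pi_2^{-1}(q)$ down to'' anything of codimension $2$; the image of that fiber under $\pi_1$ \emph{contains} all of $A$. No dimension count on the graph, by itself, locates a codimension-$2$ piece of $\mathcal I(f_+)$ inside $A$. The statement does, however, follow from elementary algebra on the defining data, contrary to your closing remark. Write $A=\{k=0\}$ with $k$ irreducible and normalize the image point to $q=[1:0:\cdots:0]$. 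That $f_+$ contracts $A$ to $q$ means the components $X_1,\ldots,X_N$ of $\tilde f_+$ vanish on $\{k=0\}$, i.e.\ $k\mid X_i$ for $i=1,\ldots,N$, while $X_0$ does not vanish identically on $A$ (otherwise $A\subseteq\mathcal I(f_+)$, which has codimension $\geq 2$). Hence
\begin{equation*}
A\cap\{X_0=0\}\;\subseteq\;\{X_0=\cdots=X_N=0\}=\mathcal I(f_+),
\end{equation*}
and by the projective dimension theorem $A\cap\{X_0=0\}$ is a nonempty subvariety of pure codimension $1$ in the irreducible $(N-1)$-dimensional variety $A$, i.e.\ of codimension $2$ in $\mbP^N$. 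This is exactly the required codimension-$2$ subvariety of $\mathcal I(f_+)$ contained in $A$, obtained without any resolution of $f_+$.
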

\begin{proof} \quad These statements are well known and can be found in the classical book  \cite{Hudson1927} and in a more modern presentation (for 2D only) in \cite{Diller1996, FornaessSibony2,Alberich}.
\end{proof}

Thus, an important feature of a birational map $f_+$ (which does not hold for general rational maps) is that it contracts its critical set to a variety of codimension $\ge 2$. Notice that neither $\det(d{\tilde f}_+)$ nor $K_+$ must be a minimal defining polynomial of the critical set $\mathcal C(f_+)$. The degree $\det(d{\tilde f}_+)$ equals $(N+1)(d_+-1)$, while $\deg(K_+)=d_+d_--1$. In any case, if we denote by $K_1,\ldots,K_r$ the minimal defining polynomials of the irreducible components of the critical set $\mathcal{C}(f_+)$, then 
$$
\det({\tilde f}_+)=\prod_{i=1}^r K_i^{\alpha_i}, \quad K_+=\prod_{i=1}^r K_i^{\beta_i}, \quad \alpha_i,\beta_i\in\mathbb N.
$$

In what follows, we will denote a birational map $f_+$ simply by $f$, and we will call $f_-$ the \emph{birational inverse} of $f$, and write $f_-=f^{-1}$. Let $A\subset\mathcal C(f)$ be a degree lowering hypersurface. According to Proposition \ref{prop birational gen}, part a), the image $f(A)\subset \mathcal I(f^{-1})$ is a variety of codimension $\ge 2$. For $N=2$, this means $f(A)$ is a point, but for $N\ge 3$, $f(A)$ can be more complicated. If the dimension of $f(A)$ is $\ge 1$, then the images of $f(A)$ under further iterates of $f$ become more and more complicated algebraic varieties (i.e.\ of higher and higher degree). Therefore it becomes increasingly difficult to arrange (by adjusting parameters of the map $f$, say) that some $f^k(A)$ is a subset of $\mathcal I(f)$. One can hope to be able to arrange this only if $f(A)$ is a point, so that some iterate of this point under $f^{k-1}$ has to land at a point belonging to $\mathcal I(f)$. Therefore, we make the following assumption: 

\begin{assumption} \label{assumption}
 All degree lowering components of $\mathcal C(f)$ are contracted by $f$ to points. 
 \end{assumption}

We stress that this condition is satisfied in all our examples. Of course, there could exist interesting examples where this assumption is violated.


\section{Control of factorization under pull-back: \\ local indices}
\label{sect index}

The question whether the $N+1$ components of $\tilde f^k$ have a common factor, so that $\deg(f^k)<(\deg(f))^k$, reduces to the problem of factorization and common factors of the successive pull-backs of the coordinate monomials $x_0,\ldots,x_N$, where the pull-back of a homogeneous polynomial  $P=P(x_0,\ldots,x_N)$ is defined as
\begin{equation}\label{pull back}
f^*P=P\circ \tilde f.
\end{equation}

The role of components of the critical set $\mathcal C(f)$ contracted to points is clarified by the following statement. 

\begin{pro}\label{lemma prefactor}
Let $f:\mbP^N\dasharrow\mbP^N$ be a birational map, let $\{K=0\}$ be an irreducible component of $\mathcal C(f)$ (with $K$ being its minimal defining polynomial), such that $f$ contracts $\{K=0\}\setminus \mathcal I(f)$ to a point $p\in\mathcal I(f^{-1})$. Then, for any homogeneous polynomial $P$ such that $\{P=0\}$ passes through the point $p$, its pull-back $f^*P$ is divisible by $K$. Moreover, if the multiplicity of the point $p$ on the hypersurface $\{P=0\}$ is $m\ge 1$, then $f^*P$ is divisible by $K^m$.
\end{pro}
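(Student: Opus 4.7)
The plan is to reduce everything to a direct substitution argument after choosing convenient homogeneous coordinates. First I would normalise by a projective change of coordinates so that $p=[1:0:\ldots:0]$. Let $L_p=\{(t,0,\ldots,0)\,:\,t\in\mbC\}\subset\mbC^{N+1}$ denote the line above $p$. By hypothesis, for every $x\in\{K=0\}\setminus\mathcal I(f)$ the image $f(x)$ equals $p$, which means $\tilde f(x)\in L_p$, i.e.\ $X_1(x)=\cdots=X_N(x)=0$. Since $\mathcal I(f)$ has codimension at least $2$, its complement is dense in the irreducible hypersurface $\{K=0\}$, so $X_1,\ldots,X_N$ vanish identically on $\{K=0\}$.

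Next I would upgrade "vanishing on $\{K=0\}$" to "divisibility by $K$". Since $K$ is the minimal defining polynomial of an irreducible hypersurface, it generates a prime (hence radical) ideal, so by the Nullstellensatz any polynomial vanishing on $\{K=0\}$ is divisible by $K$. Thus $X_j=K\cdot Y_j$ for $j=1,\ldots,N$ and some homogeneous polynomials $Y_j$. Note that minimality of $\tilde f$ forces $X_0$ to be coprime to $K$, but this will not actually be needed.

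For the multiplicity claim, I would translate the geometric statement $\mathrm{mult}_p\{P=0\}=m$ into an algebraic statement about $P$. In the affine chart $x_0=1$ around $p$, the polynomial $P(1,y_1,\ldots,y_N)$ vanishes to order $m$ at the origin, which means that as a homogeneous polynomial
\begin{equation*}
P(x_0,x_1,\ldots,x_N)=\sum_{|\alpha|\ge m}c_\alpha\, x_0^{\deg P-|\alpha|}x_1^{\alpha_1}\cdots x_N^{\alpha_N}.
\end{equation*}
In other words, every monomial of $P$ contains at least $m$ factors drawn from $x_1,\ldots,x_N$.

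Finally, applying $f^*$ just substitutes $x_j\mapsto X_j$:
\begin{equation*}
f^*P=\sum_{|\alpha|\ge m}c_\alpha\, X_0^{\deg P-|\alpha|}X_1^{\alpha_1}\cdots X_N^{\alpha_N}=\sum_{|\alpha|\ge m}c_\alpha\, K^{|\alpha|}X_0^{\deg P-|\alpha|}Y_1^{\alpha_1}\cdots Y_N^{\alpha_N},
\end{equation*}
which is divisible by $K^m$, as claimed. The only delicate point, and the step I would write out most carefully, is the passage from pointwise contraction on $\{K=0\}\setminus\mathcal I(f)$ to the identity $X_j|_{\{K=0\}}\equiv 0$; this relies on the codimension bound $\mathrm{codim}\,\mathcal I(f)\ge 2$ recalled in Section~\ref{Sect rational}. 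The remaining steps are elementary once the coordinates are chosen.
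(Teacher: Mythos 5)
Your proof is correct and follows essentially the same route as the paper's: the paper likewise expands $P$ around $p$ in terms of order $\ge m$ (working with $X_i-p_iX_0$ at a general $p=[1:p_1:\ldots:p_N]$ rather than normalising $p$ to $[1:0:\ldots:0]$ first) and substitutes the $K$-divisible combinations into each monomial. Your additional care about passing from pointwise contraction on $\{K=0\}\setminus\mathcal I(f)$ to identical vanishing on $\{K=0\}$ via the codimension bound, and the Nullstellensatz step for divisibility by the minimal defining polynomial, only makes explicit what the paper leaves implicit.
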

Informally speaking, each branch of a hypersurface $\{P=0\}$ through the singularity $p\in\mathcal I(f^{-1})$ causes an extra factor $K$ in the pull-back of the polynomial $P$ under $f$ to appear.

\begin{proof}
We start with the case $m=1$. We have: $f^*P(x_0,\ldots,x_N)=P(\tilde f(x_0,\ldots,x_N))$. By hypothesis, whenever $[x_0:\ldots:x_N]\in \mbP^N$ satisfies  $K(x_0,\ldots,x_N)=0$ (and does not belong to $\mathcal I(f)$), its image $\tilde f(x_0,\ldots,x_N)=\tilde p$ is a lift of $p$, so that  $P(\tilde f(x_0,\ldots,x_N))=P(\tilde p)=0$. Thus, $P\circ \tilde f=f^*P$ is divisible by $K$.

In the general case $m\ge 1$, we assume, without loss of generality, that $p=[1:p_1:\ldots:p_N]$. Set $\xi_i=x_i/x_0$ for $i=1,\ldots,N$. We have the Taylor expansion at $p$:
\begin{equation}\label{Taylor}
P(1,\xi_1,\ldots,\xi_N)=\sum_{i_1+\ldots+i_N\ge m} c_{i_1\ldots i_N}(\xi_1-p_1)^{i_1}\cdots (\xi_N-p_N)^{i_N}
\end{equation}
(the sum is finite). We denote $\tilde f(x_0,\ldots,x_N)=(X_0,\ldots,X_N)$. By hypothesis, polynomials
$X_i-p_iX_0$ with $i=1,\ldots,N$ vanish as soon as $K(x_0,\ldots,x_N)=0$, thus, they are divisible by $K(x_0,\ldots,x_N)=0$. This implies that (every term of) the polynomial
$$
P(X_0,\ldots,X_N)=\sum_{i_1+\ldots+i_N\ge m} c_{i_1\ldots i_N}(X_1-p_1X_0)^{i_1}\cdots (X_N-p_NX_0)^{i_N}X_0^{\deg(P)-i_1-\ldots-i_N}
$$
is divisible by $K^m$.
\end{proof}

One can prove a similar statement for any irreducible component $\{K=0\}$ of $\mathcal C(f)$ whose $f$-image is a variety of dimension between 1 and $N-2$, namely that $f^*P$ acquires a factor which is a certain power of $K$  if $\{P=0\}$ contains the $f$-image of $\{K=0\}$. However, due to Assumption \ref{assumption} we do not consider such components any further. 

\begin{de}\label{factorization under pullback}
Given a homogeneous polynomial $P$,  its pull-back under $f$ is given by
\begin{equation}\label{def Ptilde}
f^*P=K_1^{\nu_1}\cdots K_r^{\nu_r}\widetilde P,
\end{equation}
where $K_1,\ldots,K_r$ are the minimal defining polynomials of the components of $\mathcal C(f)$ contracted by $f$ to points, and $\widetilde P$ is not divisible by any of $K_1,\ldots,K_r$. We call the polynomial $\widetilde P$ the \emph{proper pull-back of $P$ under $f$}.
\end{de}

It is important to notice that Proposition \ref{lemma prefactor} gives actually only a lower bound for the exponents $\nu_i$ in \eqref{def Ptilde}, namely 
\begin{equation}
\nu_i\ge \; \textrm{multiplicity of the point} \; p_i=f(\{K_i=0\}) \; \textrm{on}\;  \{P=0\}. 
\end{equation}
To determine these exponents precisely, we use the concept of blow-up: given an $N$-dimensional variety $Y$, the \emph{blow-up} at a point $p\in Y$ is a variety $X$ and a birational map $\pi:X\rightarrow Y$ such that $E:=\pi^{-1}(p)\cong \mathbb{P}^{N-1}$, and $\pi:X\setminus E \rightarrow Y\setminus\{p\}$ is biregular. We write $X=Bl_p(Y)$. The hypersurface $E\subset X$ is called the \emph{exceptional divisor} of the blow-up. In particular, if $Y=\mbP^N$  then $\pi$ is the famous \emph{sigma-process}, cf.\ \cite{Shafarevich2013}, conveniently written in homogeneous coordinates $[x_0:x_1:\ldots:x_N]$ as follows. Suppose (without loss of generality) that $p$ lies in the affine space $\{x_0\neq 0\}$, so that $p=[1:p_1:\ldots:p_N]$. Then, in one of the affine coordinate patches on $Bl_p(\mbP^N)$ around $E$, with coordinates $(u_1,\ldots,u_N)$, the projection $\pi$ is given by
\begin{equation}\label{sigma}
\pi: \; x_0=1,\quad x_1=p_1+u_1,\quad x_2=p_2+u_1u_2,\; \ldots \; x_N=p_N+u_1u_N.
\end{equation}
The exceptional divisor $E$ in $Bl_p(\mbP^N)$ is isomorphic to $\mbP^{N-1}$ given in affine coordinates by $\{u_1=0\}$. For any hypersurface $H$ in $Y$, the preimage $\pi^{-1}(H)$ is called the \emph{total transform} of $H$ in the blow-up variety $X$, while $\overline{\pi^{-1}(H\setminus\{p\})}$ is called the \emph{proper transform} of $H$. 

Blow-ups can be used to \emph{resolve} singularities of birational maps. Generically, if $f$ contracts $\{K=0\}$ to a point $p$,  then the lift of $f$ to $Bl_{p}(\mbP^N)$ maps  (the proper transform of) $\{K=0\}$ to $E$ birationally. In such a case, we say that the singularity $p$ of $f^{-1}$ is \emph{simple}. In more complicated cases, the lift of $f$ to $Bl_p(\mbP^N)$ could still contract $\{K=0\}$ to a subvariety of $E$ of codimension $>1$. Then a further blow-up of this image is necessary. The expressions for a blow-up of a subvariety are similar to \eqref{sigma}, see \cite{Shafarevich2013}, and can always be arranged to be polynomial in suitable affine coordinates. The successive blow-ups are performed until the lift of $f$ maps $\{K=0\}$ birationally  to a subvariety of codimension 1, in which case we say that the singularity $p\in\mathcal I(f^{-1})$ has been resolved. 

Assuming that the process of successive blow-ups terminates, we obtain, for each irreducible component $\{K=0\}$ of $\mathcal C(f)$ contracted to a point $p$, a top level blow-up at $p$, whose exceptional divisor is a birational image of $\{K_i=0\}$.
Assume without loss of generality that $p$ lies in the affine space $x_0\neq 0$. Then the above mentioned top level blow-up at $p$ can be described by a local change of coordinate  
$$
\phi: (u_1,u_2,\ldots,u_N)\to [1:x_1:\ldots:x_N],
$$ 
in which the exceptional divisor $E$ is given by $\{u_1=0\}$.  We may assume that all components $x_k(u_1,\ldots,u_N)$ are polynomials such that $[1:x_1:\ldots:x_N]|_{u_1=0}=p$. Moreover $\phi$ is birational, so the functions $u_i=u_i(x_1,\ldots,x_N)$ are rational. They can (and should) be naturally considered also as rational functions in homogeneous coordinates, $u_i=u_i(x_0,x_1,\ldots,x_N)$. Since $f$ establishes a birational map of $\{K=0\}$ to $E$, we can assume that
$$
f^*u_1=K\cdot \widetilde{u}_1, \quad f^*u_2=\widetilde{u}_2,\;\ldots\; f^*u_N=\widetilde{u}_N,
$$
where neither of the rational functions $\widetilde{u}_i$ is constant on $\{K=0\}$ (in particular, neither of $\widetilde{u}_i$ vanishes identically on $\{K=0\}$).
 
\begin{de}\label{def index}
The \emph{local index} $\nu_{\phi}(P)$ of a homogeneous polynomial $P=P(x_0,x_1,\ldots,x_N)$ with respect to the blow-up map $\phi$ is defined by
\begin{equation}\label{local-indice}
    P\circ \phi=P(1,x_1(u),\ldots,x_N(u))=u_1^{\nu_{\phi}(P)}\bar{P}(u_1,\ldots,u_N),
\end{equation}
where $\bar P$ is a polynomial such that $\bar{P}(0,u_2,\ldots,u_N)\not\equiv 0$. 
\end{de}

Thus, $\nu_\phi(P)$ is the highest degree of $u_1$ which divides the polynomial $P\circ\phi$.

\begin{re} \label{remark multiplicity}
If $p$ is a simple singularity of $f^{-1}$, and $\phi$ is the standard sigma-process \eqref{sigma}, then $\nu_\phi(P)$ coincides with the multiplicity of point $p$ on the hypersurface $\{P=0\}$.
\end{re}

This follows directly from formula \eqref{sigma}.

\begin{theo} \label{theorem split off}
For a birational map $f$ of $\,\mbP^N$, let $\{K=0\}$ be a component of ${\mathcal C}(f)$ mapped by $f$ to a point $p$. Let $\phi:X\to\mbP^N$ be a blow-up map over $p$ such that the lift of $f$ to $X$ maps the proper image of $\{K=0\}$ in $X$ birationally to the exceptional divisor $\{u_1=0\}$. Then, for any homogeneous polynomial $P$, the pull-back $f^*P$ is divisible by $K^{\nu_\phi(P)}$, but not by any higher power of $K$.
\end{theo}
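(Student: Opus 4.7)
The plan is to factor $f=\phi\circ\hat f$ with $\hat f:=\phi^{-1}\circ f\colon\mbP^N\dasharrow X$, so that the pull-back of $P$ is computed in two stages, through $\phi$ and then through $\hat f$. Working in the affine chart $\{x_0\ne 0\}$ containing $p$ and using the defining relation $\phi(u)=[1:x_1(u):\ldots:x_N(u)]$, one obtains $X_i/X_0=x_i\bigl(f^*u_1,\ldots,f^*u_N\bigr)$, since $u_j\circ\tilde f=f^*u_j$ records the $u$-coordinates of $\hat f(x)$. Combined with the homogeneity of $P$ of degree $d_P$, this yields
$$
f^*P(x)\;=\;X_0(x)^{\,d_P}\,(P\circ\phi)\bigl(f^*u_1(x),\ldots,f^*u_N(x)\bigr).
$$
Inserting $(P\circ\phi)(u)=u_1^{\,\nu}\bar P(u)$ with $\nu:=\nu_\phi(P)$ and $\bar P(0,u_2,\ldots,u_N)\not\equiv 0$, together with $f^*u_1=K\tilde u_1$ and $f^*u_j=\tilde u_j$ for $j\ge 2$, produces the master identity
$$
f^*P \;=\; K^{\nu}\,Q, \qquad Q \;:=\; X_0^{d_P}\,\tilde u_1^{\,\nu}\,\bar P\bigl(K\tilde u_1,\tilde u_2,\ldots,\tilde u_N\bigr).
$$

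Both halves of the theorem then reduce to the single statement $v_K(Q)=0$, where $v_K$ is the discrete valuation on $\mbC(x_0,\ldots,x_N)$ associated with the prime divisor $\{K=0\}$. Indeed, if this holds then $v_K(f^*P)=\nu+v_K(Q)=\nu$, which is precisely the claim that $K^{\nu}\mid f^*P$ but $K^{\nu+1}\nmid f^*P$; as a by-product, the rational function $Q=f^*P/K^{\nu}$ is automatically a polynomial, because the denominator of $Q$ in lowest terms must divide a power of $K$ (since $K^{\nu}Q=f^*P$ is a polynomial and $K$ is irreducible), and $v_K(Q)=0$ forbids any $K$-factor below.

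The verification of $v_K(Q)=0$ splits additively into three pieces: (i) $v_K(X_0)=0$, because $\tilde f$ maps the generic point of $\{K=0\}$ to the lift of $p=[1:p_1:\ldots:p_N]$, so $X_0|_{\{K=0\}}$ is a non-zero constant; (ii) $v_K(\tilde u_1)=v_K(f^*u_1)-v_K(K)=1-1=0$, where $v_K(f^*u_1)=1$ is obtained by inspecting the induced local homomorphism of DVRs $\mathcal O_{X,E}\hookrightarrow\mathcal O_{\mbP^N,\{K=0\}}$: since $\hat f$ is birational these DVRs share a fraction field, forcing the ramification index to equal $1$, so the uniformizer $u_1$ of $E$ pulls back to a uniformizer of $\{K=0\}$; (iii) $v_K\bigl(\bar P(K\tilde u_1,\tilde u_2,\ldots,\tilde u_N)\bigr)=0$, because reducing modulo $K$ returns $\bar P(0,\tilde u_2|_{\{K=0\}},\ldots,\tilde u_N|_{\{K=0\}})$, the pull-back of the non-zero polynomial $\bar P(0,u_2,\ldots,u_N)$ along the dominant map $(\tilde u_2,\ldots,\tilde u_N)|_{\{K=0\}}\colon\{K=0\}\dasharrow E$ furnished by birationality of $\hat f|_{\{K=0\}}\to E$.

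The main obstacle is step (iii): transporting the birational-geometric hypothesis $\hat f|_{\{K=0\}}\to E$ into the analytic statement that $\bar P(0,\tilde u_2,\ldots,\tilde u_N)$ does not vanish identically on $\{K=0\}$. This requires some care in coordinating the chosen affine chart on $X$ with the generic behaviour of $\hat f$, so that $(\tilde u_2,\ldots,\tilde u_N)$ genuinely parameterise a dense open subset of $E$ — a fact built into the construction of $\phi$ by successive blow-ups and into the hypothesis that the lift lands birationally in the exceptional divisor $\{u_1=0\}$. The remaining verifications (i) and (ii), as well as the reduction to the valuation claim, are essentially formal bookkeeping given the master identity.
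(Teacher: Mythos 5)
Your proof is correct and follows essentially the same route as the paper's: both rest on the identity $f^*P = X_0^{\deg P}\,K^{\nu}\,\widetilde u_1^{\,\nu}\,\bar P(K\widetilde u_1,\widetilde u_2,\ldots,\widetilde u_N)$ together with the observation that the cofactor of $K^{\nu}$ is not divisible by $K$ because $\bar P(0,u_2,\ldots,u_N)\not\equiv 0$ and the restriction $\{K=0\}\dasharrow E$ is dominant. You merely make explicit — via the valuation $v_K$, the homogenization factor $X_0^{\deg P}$, and the DVR argument giving $v_K(f^*u_1)=1$ — several points that the paper builds into its setup preceding the definition of the local index (where $f^*u_1=K\widetilde u_1$ with $\widetilde u_1$ not vanishing identically on $\{K=0\}$ is assumed), so no gap remains.
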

\begin{proof} 
We have:
$$
f^*P(x)=(f^*u_1)^{\nu_\phi(P)}\bar{P}(f^*u_1,f^*u_2,\ldots,f^*u_N)=K^{\nu_\phi(P)}\widetilde{u}_1^{\nu_\phi(P)}\bar{P}(K\widetilde u_1,\widetilde u_2,\ldots,\widetilde u_N).
$$
This function, by definition, is a polynomial on $x_0,\ldots,x_N$. The last formula shows that this polynomial is divisible by $K^{\nu_\phi(P)}$ and by no higher degree of $K$, since $\bar{P}(0,u_2,\ldots,u_N)\not\equiv 0$.
\end{proof}

This theorem will be illustrated by concrete computations of the involved objects and changes of variables in the examples below, compare Propositions \ref{lemma prefactor z y-z}, \ref{lemma Example 3 pullback}.


\section{Propagation of local indices}
\label{sect propagate}

Now that we have control over the possible factorizations of polynomials under the pull-back, we apply this to the problem of the exact computation of degrees of iterates of $f$.

\begin{de}\label{inductive factorization}
Given a homogeneous polynomial $P=P_0$, we define  inductively the polynomials $P_n$ for $n\geq 1$ by 
\begin{equation}\label{def Pn}
f^*P_n=K_1^{\nu_1(n)}\cdots K_r^{\nu_r(n)}P_{n+1},
\end{equation}
where $K_1,\ldots,K_r$ are the minimal defining polynomials of the components of $\mathcal C(f)$ contracted by $f$ to points, and $P_{n+1}$ is not divisible by any of $K_1,\ldots,K_r$, so that $P_{n+1}$ is the proper pull-back of $P_n$ under $f$. 
\end{de}

\begin{pro}
For a generic hyperplane $P=P_0$, we have $\deg(f^n)=\deg(P_n)$.
\end{pro}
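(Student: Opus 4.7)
The plan is to identify $P_n$ with $Q_n := P_0(f^{[n]})$ (up to a nonzero constant) for generic $P_0$, where $f^{[n]}$ is the minimal lift of $f^n$. Once this is established, the conclusion follows immediately: since $P_0$ is linear and each component of $f^{[n]}$ is homogeneous of degree $\deg(f^n)$, we have $\deg(Q_n) = \deg(f^n)$, hence $\deg(P_n) = \deg(f^n)$.

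First I would pin down the genericity condition. Writing $Q_n = \sum_j a_j Y_j^{(n)}$, where the $Y_j^{(n)}$ are the components of $f^{[n]}$ and the $a_j$ are the coefficients of $P_0$, the polynomial $Q_n$ is not divisible by $K_i$ as long as the $a_j$ avoid a proper linear subspace of $\mbC^{N+1}$: since the $Y_j^{(n)}$ are coprime, some $Y_j^{(n)}$ is not divisible by $K_i$, and a generic linear combination then inherits this property. Over $\mbC$, the countable intersection over all $(n,i)$ of these Zariski-open dense conditions still leaves a dense set of $P_0$; fix any such $P_0$ for the rest of the argument.

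Next I would prove $P_n = c_n Q_n$ with $c_n \in \mbC^{*}$ by induction on $n$, the case $n=0$ being trivial. The inductive step hinges on the identity
\begin{equation*}
f^{[n]} \circ \tilde f = R_n \cdot f^{[n+1]},
\end{equation*}
where $R_n$ denotes the gcd of the components of the polynomial map on the left; this identity follows from comparing the two minimal decompositions of $\tilde f^{n+1} = \tilde f^n \circ \tilde f$, namely $\tilde f^{n+1} = (f^*K^{(n)})\,(f^{[n]}\circ\tilde f) = K^{(n+1)} f^{[n+1]}$. Composing with the linear $P_0$ gives $f^* Q_n = R_n Q_{n+1}$. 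Combined with the inductive hypothesis $P_n = c_n Q_n$ and the defining relation $f^*P_n = \prod_i K_i^{\nu_i(n)} P_{n+1}$, this yields
\begin{equation*}
\prod_i K_i^{\nu_i(n)} P_{n+1} = c_n\, R_n\, Q_{n+1}.
\end{equation*}

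The main obstacle—the crucial structural lemma—is that every irreducible factor of $R_n$ must coincide (up to a constant) with one of the $K_i$. If $\pi$ is such a factor, then $\tilde f$ maps $\{\pi = 0\}$ into the common zero set of the components of $f^{[n]}$, i.e., the affine cone over $\mathcal I(f^n)$, which has codimension $\ge 2$. Hence $\{\pi = 0\}$ is a hypersurface contracted by $f$ to a subvariety of codimension $\ge 2$, so by Assumption \ref{assumption} it coincides with some $\{K_i = 0\}$. Consequently $R_n = c \prod_i K_i^{r_i(n)}$, and matching $K_i$-adic valuations in the displayed equation, using that neither $P_{n+1}$ nor $Q_{n+1}$ is divisible by any $K_i$, forces $r_i(n) = \nu_i(n)$ and $P_{n+1} = c_{n+1} Q_{n+1}$, closing the induction.
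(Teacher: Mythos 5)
Your argument follows the same route as the paper's proof: both rest on identifying the iterated proper pull-back $P_n$ of a generic linear form with $P_0\circ f^{[n]}$ up to a nonzero constant, so that $\deg(P_n)=\deg(f^{[n]})=\deg(f^n)$. The paper compresses this identification into a single ``clearly''; you make it precise via the genericity argument, the factorization $f^{[n]}\circ\tilde f=R_n\cdot f^{[n+1]}$ obtained by comparing the two minimal decompositions of $\tilde f^{\,n+1}$, and the matching of $K_i$-adic valuations. All of that is sound and is a genuine gain in rigour over the published one-paragraph proof.

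The one step that needs repair is the appeal to Assumption \ref{assumption} in your structural lemma. From $f(\{\pi=0\})\subset\mathcal I(f^n)$ you conclude that $\{\pi=0\}$ is a hypersurface contracted to a set of codimension $\ge 2$ and then invoke the Assumption. But for a birational map \emph{every} contracted component of $\mathcal C(f)$ has image of codimension $\ge 2$ (Proposition \ref{prop birational gen}, part a)), so that property alone cannot single out the $\{K_i=0\}$; Assumption \ref{assumption} only concerns \emph{degree lowering} components in the sense of Definition \ref{def deg lowering}, i.e.\ those with $f^k(A)\subset\mathcal I(f)$ for some $k$. The missing link is to check that $\{\pi=0\}$ is indeed degree lowering. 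This does follow from your setup: if the forward orbit of a point $q$ avoids $\mathcal I(f)$ for $n$ steps, then $(\tilde f)^n(\hat q)\neq 0$ for any lift $\hat q$, hence $f^{[n]}(\hat q)\neq 0$ and $q\notin\mathcal I(f^n)$; consequently every point of $f(\{\pi=0\})\subset\mathcal I(f^n)$ reaches $\mathcal I(f)$ in fewer than $n$ steps, and by irreducibility of $f(\{\pi=0\})$ a single $k$ works on a dense subset, giving $f^{k+1}(\{\pi=0\})\subset\mathcal I(f)$. Now $\{\pi=0\}$ is degree lowering, Assumption \ref{assumption} forces it to be contracted to a point, and it is therefore one of the $\{K_i=0\}$. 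With this insertion your induction closes correctly.
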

\begin{proof}
Recall that $\deg(f^n)$ can be determined as $\deg(f^{[n]})$, where $f^{[n]}$ is defined by the relation $(\tilde f)^n=K\cdot f^{[n]}$, where $\tilde f$ is the minimal lift of the map $f$, and $K$ is the common factor appearing in all $N+1$ components of $(\tilde f)^n$. The $N+1$ components are nothing else but the $n$-th pullback polynomials of the coordinate functions $x_i$, $i=0,\ldots,N$. Clearly, $K$ can be also defined as the maximal factor that splits off the $n$-th iterated pullback $(f^*)^n P_0$ for a generic linear form $P_0=P_0(x_0,\ldots,x_N)$. This proves the statement. 
\end{proof}

Thus, to determine $\deg(f^n)$, we have to compute, for a generic linear polynomial $P_0$, the precise sequences of exponents $\nu_i(n)$ for all components $\{K_i=0\}\subset\mathcal C(f)$ contracted by $f$ to points. 
In the present section, we relate the sequence of exponents $\nu(n)$ for an irreducible component $\{K=0\}\subset\mathcal C(f)$ contracted by $f$ to a point $p$,  to certain local indices defined at the points of the orbit $f^m(p)$. We show how the local indices can be propagated along such an orbit. We do not provide a complete treatment, but rather discuss several representative cases which allow us to give a full account in our examples.

\begin{itemize}
\item \textbf{Case 1:} If $f^{m-1}(p)\notin\mathcal C(f)$, the map $f$ is biregular in a neighborhood of $f^{m-1}(p)$, we can transfer the tower of blow-ups from $f^{m-1}(p)$ to $f^m(p)$. This is done as follows: let $\phi_{m-1}$ be the blow-up at $f^{m-1}(p)$; it is a composition of elementary blow-ups along subvarieties, and the exceptional divisor $E_{m-1}$ of the top-level elementary blow-up is a birational image of the (proper transform of) $\{K=0\}$ under (the lift of) $f^{m-1}$. The lift of $f$ still contracts $E_{m-1}$ to the point $f^m(p)$. To resolve this singularity, we perform the algorithm described in the previous section. Namely, we perform the sequence of  blow ups at $f^m(p)$ until we arrive at an exceptional divisor $E_m$ of the top-level elementary blow-up which is a birational image of $E_{m-1}$. Actually, due to the biregular nature of $f$ at the neighborhood of $f^{m-1}(p)$, the structure of the sequence of blow-ups at $f^{m}(p)$ (i.e., the number of elementary blow ups and the dimensions of the subvarieties we blow up) is isomorphic to the analogous structure at $f^{m-1}(p)$. The composition of the so constructed elementary blow-ups will be $\phi_m$, the blow-up at $f^m(p)$. Since all $K_i\neq 0$ at $f^{m-1}(p)$, we derive from \eqref{def Pn}:
\begin{equation}\label{propagation regular}
\nu_{\phi_{m-1}}(P_{n+1})=\nu_{\phi_m}(P_n).
\end{equation}

\item \textbf{Case 2:} Now consider the case $f^{m-1}(p)\in\mathcal C(f)\setminus\mathcal I(f)$. As in case 1, let $\phi_{m-1}$ be the blow-up at $f^{m-1}(p)$ such that the exceptional divisor $E_{m-1}$ of the top-level elementary blow-up is a birational image of the (proper transform of) $\{K_i=0\}$ under (the lift of) $f^{m-1}$. As in case 1, the lift of $f$ still contracts $E_{m-1}$ to the point $f^m(p)$, and we resolve this singularity by blowing up at the point $f^m(p)$ until  $E_{m-1}$ is birationally mapped to the top level exceptional divisor $E_m$ at $f^m(p)$. However, this time the exact structure of the sequence of blow-ups (the number of elementary blow ups and the dimensions of the subvarieties we blow up) at $f^{m}(p)$ depends on the map $f$ and is not isomorphic to the analogous structure at $f^{m-1}(p)$. In Section \ref{sect example 2} we consider an example where the point $p_1\in \mathcal C(f)\setminus\mathcal I(f)$ falls into this case, and where we need two blow-ups at $p_1$ and three blow-ups at $p_2=f(p_1)$.

To determine the index of $P_{n+1}$ at $E_{m-1}$, we have to consider the relation \eqref{def Pn} upon the blow-up at the point $f^{m-1}(p)$, 
\begin{equation}\label{phi m-1}
\phi_{m-1}: (u_1,\ldots,u_N)\mapsto[x_0:x_1:\ldots:x_N],
\end{equation}
in which the exceptional divisor $E_{m-1}$ is given by $\{u_1=0\}$. We have also a blow up chart
$$
\phi_{m}: (\tilde u_1,\ldots,\tilde u_N)\mapsto[x_0:x_1:\ldots:x_N],
$$ 
in which the exceptional divisor $E_m$ is given by $\{\tilde u_1=0\}$. The map between $\{u_1=0\}$ and $\{\tilde u_1=0\}$ established by $f$ is birational.
Since $f^{m-1}(p)\not\in\mathcal I(f)$, the expression $\tilde f\circ \phi_{m-1}$ does not vanish identically at $u_1=0$, and it is convenient to consider the map $\tilde f\circ \phi_{m-1}$ as an affine chart for the blow-up at $f^m(p)$ in which $E_m$ is parametrized as $\{u_1=0\}$. Thus, we can assume that 
\begin{equation}
\tilde f\circ \phi_{m-1}=\phi_m.
\end{equation}
This allows us to evaluate the left-hand side of \eqref{def Pn} at $\phi_{m-1}$:
$$
f^*(P_n)\circ \phi_{m-1}=P_n(\tilde f\circ\phi_{m-1})=P_n(\phi_m).
$$
Further, some components $\{K_i=0\}$ of $\mathcal C(f)$ pass through the point $f^{m-1}(p)$. For such components, we have
\begin{equation}\label{K on blowup}
K_i\circ\phi_{m-1}\sim u_1^{\nu_{\phi_{m-1}}(K_i)}.
\end{equation}
Plugging all this into \eqref{def Pn}, we find:
\begin{equation}\label{propagation critical}
\nu_{\phi_{m-1}}(P_{n+1})=\nu_{\phi_m}(P_n)-\nu_1(n)\nu_{\phi_{m-1}}(K_1)-\ldots-\nu_r(n)\nu_{\phi_{m-1}}(K_r).
\end{equation}
This formula comes to replace \eqref{propagation regular} in the present case.

\item \textbf{Case 3:} Finally, we consider the case $f^{m-1}(p)\in \mathcal I(f)$. We can have different possibilities concerning the fate of the top-level exceptional divisor $E_{m-1}$.

\begin{enumerate}[(a)]
\item It can happen that $E_{m-1}$ is mapped birationally to some irreducible component of $\mathcal C(f^{-1})$. This is related to the notion of ``singularity confinement'', cf. \cite{BellonViallet1999}. To determine the index of $P_{n+1}$ at $E_{m-1}$, we have to consider the relation \eqref{def Pn} upon the blow-up \eqref{phi m-1} at the point $f^{m-1}(p)$,  in which the exceptional divisor $E_{m-1}$ is given by $\{u_1=0\}$. Since $f^{m-1}(p)\in\mathcal I(f)$, all $N+1$ components of $\tilde f\circ \phi_{m-1}$ vanish at $u_1=0$, therefore, every component is divisible by some power of $u_1$. Let $s\in\mbN$ be the the power of the greatest common divisor of the form $u_1^s$ of the $N+1$ components of $\tilde f\circ \phi_{m-1}$. Then $u_1^{-s}(\tilde f\circ \phi_{m-1})|_{u_1=0}$ is a parametrization of the corresponding component of  $\mathcal C(f^{-1})$.  Since generically the polynomial $P_n$ does not vanish identically on this component of  $\mathcal C(f^{-1})$, we have:
$$
(f^*P_n)\circ \phi_{m-1}\sim u_1^{s\deg(P_n)}.
$$ 
Further, some components $\{K_i=0\}$ of $\mathcal C(f)$ pass through the point $f^{m-1}(p)$. For such components, we have formula \eqref{K on blowup}. Plugging all this into \eqref{def Pn}, we find:
\begin{equation}\label{propagation inverse critical}
\nu_{\phi_{m-1}}(P_{n+1})=s\deg(P_n)-\nu_1(n)\nu_{\phi_{m-1}}(K_1)-\ldots-\nu_r(n)\nu_{\phi_{m-1}}(K_r).
\end{equation}

\item If $E_{m-1}$ is not mapped birationally to some component of $\mathcal C(f^{-1})$, the fate of the exceptional divisor $E_{m-1}$ under $f$ could be much more complicated. Instead of trying to classify all possible situations, we shall consider one particular case which is illustrated by the example of Section \ref{sect linearizable}. In this case, $E_{m-1}$ is mapped birationally to a top-level exceptional divisor of a blow-up at some point from $\mathcal I(f)$. This point can be denoted by $f^m(p)\in \mathcal I(f)$ (which is of course an abuse of notation), and the image by $f$ of $E_{m-1}$ can be denoted by $E_m$. To determine the index of $P_{n+1}$ at $E_{m-1}$, we have to consider the relation \eqref{def Pn} upon the blow-up \eqref{phi m-1} at the point $f^{m-1}(p)$,
in which the exceptional divisor $E_{m-1}$ is given by $\{u_1=0\}$. Since $f^{m-1}(p)\in\mathcal I(f)$, all $N+1$ components of $\tilde f\circ \phi_{m-1}$ vanish at $u_1=0$, therefore, every component is divisible by some power of $u_1$. Let $s\in\mbN$ be the the power of the greatest common divisor of the form $u_1^s$ of the $N+1$ components of $\tilde f\circ \phi_{m-1}$. Then we can use the map 
$$
\phi_m:=u_1^{-s}(\tilde f\circ \phi_{m-1})
$$ 
as an affine chart for the blow-up at $f^m(p)$ in which $E_m$ is parametrized as $\{u_1=0\}$.  We have:
$$
(f^*P_n)\circ \phi_{m-1}\sim u_1^{s\deg(P_n)}u_1^{\nu_{\phi_m}(P_n)}.
$$ 
Further, some components $\{K_i=0\}$ of $\mathcal C(f)$ pass through the point $f^{m-1}(p)$. For such components, we have \eqref{K on blowup}, as before. Plugging all this into \eqref{def Pn}, we find:
\begin{equation}\label{propagation singular}
\nu_{\phi_{m-1}}(P_{n+1})=s\deg(P_n)+\nu_{\phi_m}(P_n)-\nu_1(n)\nu_{\phi_{m-1}}(K_1)-\ldots-\nu_r(n)\nu_{\phi_{m-1}}(K_r).
\end{equation}
This formula can be considered as a mixture of \eqref{propagation critical} and \eqref{propagation inverse critical}.
\end{enumerate}
\end{itemize}


\section{Algorithm for computing $\deg(f^n)$}
\label{section first algorithm}

Based on the results of two previous sections, we now propose an algorithm for computing the degrees of the iterates of a birational map $f:\mathbb P^N\dasharrow \mathbb P^N$ satisfying Assumption \ref{assumption}: 
\begin{itemize}
\item \textbf{Step 1:} For all irreducible components $\{K_i=0\}$ of $\mathcal C(f)$ contracted by $f$ to points $p_i$, perform the blow-up of $\mbP^N$ so that  the lift of $f$ to the blow-up variety maps the proper transform of $\{K_i=0\}$ birationally to the corresponding exceptional divisor of a (local) blow-up map $\phi_i$, as described in Section \ref{sect index}.
\item \textbf{Step 2:} Propagate the blow-ups along the orbits of the points $p_i$, as described in Section \ref{sect propagate}.
\item \textbf{Step 3:} For a generic homogeneous polynomial $P_0$, define its iterated proper pullbacks $P_n$ by Definition \ref{inductive factorization}. Compute the indices of $P_n$ along the orbits of $p_i$ according to formulas for the propagation of indices, see \eqref{propagation regular}, \eqref{propagation critical}, \eqref{propagation inverse critical}, and \eqref{propagation singular}. Compute the degrees of $P_n$ according to the obvious consequence of \eqref{def Pn}:
\begin{equation}
\deg(P_{n+1})=d_+\deg(P_n)-\nu_1(n)\deg(K_1)-\ldots-\nu_r(n)\deg(K_r).
\end{equation}
Here, according to Theorem \ref{theorem split off}, the numbers $\nu_i(n)$ are local indices of the polynomial $P_n$ at the blow ups resolving the contraction of components $\{K_i=0\}$ to corresponding points.  
\end{itemize}
 This algorithm gives a system of recurrence relations for the desired degrees $\deg(f^n)=\deg(P_n)$. However, in general this algorithm does not terminate, so that the system of recurrent relations is infinite. There are two scenarios leading to a finite (closed) system of recurrent relations.
 \begin{itemize}
 \item The orbit $\{f^m(p)\}$ does not land at $\mathcal I(f)$, so that the component $\{K=0\}\subset\mathcal C(f)$ is not degree lowering. In this situation we are in the case 1) for all $m\ge 1$. For a generic polynomial $P_0$, the varieties $\{P_n=0\}$ for the proper pull-backs $P_n$ will avoid the sequence of points $\{f^m(p)\}$, and the recurrent formula \eqref{propagation regular} implies $\nu_{\phi_m}(P_n)=0$.
 \item The orbit of point $p$ lands after a finite number of iterates with the case 3 (a), so that the singularity $p\in\mathcal I(f^{-1})$ is confined in the sense of  \cite{BellonViallet1999}. The right-hand side of the corresponding recurrent formula \eqref{propagation inverse critical} does not contain $\nu_{\phi_m}(P_n)$, which terminates the corresponding part of the recurrent system.
 \end{itemize}
 
If all points $p_i=f(\{K_i=0\})$ fall into one of these two cases, we consider the variety $X$ obtained upon blowing up all orbits of $p_i$ corresponding to degree lowering hypersurfaces. Denote by $f_X$ the lift of $f$ to $X$. Then $f_X$ has the following fundamental property.
 
\begin{de}
A rational map $f_X:X\dasharrow X$ of an algebraic variety $X$ is called \emph{algebraically stable} if it does not possess degree lowering hypersurfaces.
\end{de}

Summarizing, we developed an algorithm for computing degrees of iterates for birational maps which admit algebraically stable modifications. Moreover, there exist examples when our algorithm of blowing up does \emph{not} terminate but still allows us to compute the full sequence of degrees $\deg(P_n)$, see the example in Section \ref{sect linearizable}.


\section{Alternative method for computing $\deg(f^n)$:\\ Picard group}
\label{sect method Picard}

In the dimension $N=2$, Diller and Favre \cite{DillerFavre2001} proved the following result:

\begin{theo}\label{th Diller-Favre}
Let $f$ be a birational map on $\mbP^2$, then there exists a proper modification $\pi: X\rightarrow \mbP^2$, i.e. a composition of finitely many sigma-processes, such that $f$ is lifted to an algebraically stable map $f_X$ on $X$.     
\end{theo}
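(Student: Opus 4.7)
The plan is to iteratively modify the surface by blowing up the orbits of indeterminacy points which absorb contractions coming from $f$, and to show that this procedure terminates after finitely many sigma-processes.

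First, I would record a useful reformulation of algebraic stability in dimension $N=2$. Part a) of Proposition \ref{prop birational gen} guarantees that every irreducible component of $\mathcal{C}(f)$ mapped into $\mathcal{I}(f^{-1})$ is contracted to a subvariety of codimension at least $2$; in $\mbP^2$, this forces every contracted component to collapse to a single point. Consequently, $f_X:X\dasharrow X$ is algebraically stable if and only if no irreducible curve $V\subset X$ satisfies $f_X^n(V)\in\mathcal{I}(f_X)$ for some $n\geq 0$; equivalently, $(f_X^*)^n=(f_X^n)^*$ on $\Pic(X)\otimes\mbR$ for all $n\geq 1$. In particular, it suffices to eliminate every degree lowering curve in the sense of Definition \ref{def deg lowering}.

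Next, I would describe the resolution of a single bad orbit. Suppose on the current surface there is an irreducible curve $V$ contracted by $f$ to a point $p_0$ whose forward orbit $p_0\mapsto p_1\mapsto\cdots\mapsto p_n$ first reaches $\mathcal{I}(f)$ at $p_n$, with none of $p_0,\ldots,p_{n-1}$ indeterminate. Performing the sequence of sigma-processes at $p_0,\ldots,p_n$, and lifting $f$ after each, produces a new surface on which the exceptional divisors $E_0,\ldots,E_n$ form a chain: the lifted map sends $E_i$ birationally onto $E_{i+1}$ for $0\leq i<n$, maps the proper transform $\widetilde V$ birationally onto $E_0$, and sends $E_n$ to a curve that no longer lies in the new indeterminacy locus. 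Thus this particular bad orbit has been eliminated.

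The main obstacle is termination: one must show that only finitely many such rounds are required, and that newly introduced exceptional divisors cannot spawn arbitrarily long new bad orbits. Following Diller--Favre, I would attach to $f_X$ a nonnegative-integer invariant $\chi(f_X)$ measuring the failure of algebraic stability, and prove that $\chi$ strictly decreases after each round of blow-ups. The natural input is that, for every nef class $[D]$ on $X$, the difference $(f_X^*)^n[D]-(f_X^n)^*[D]$ is an effective class supported on exceptional divisors produced by bad orbits of length at most $n$; the Hodge index theorem on the projective surface $X$ then bounds the total multiplicity, providing the desired monovariant. Hence only finitely many rounds of blow-ups are required to reach an algebraically stable modification $f_X$.
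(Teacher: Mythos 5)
The paper does not prove this theorem at all: it is quoted from \cite{DillerFavre2001}, and the surrounding text only describes the associated algorithm (blow up a minimal destabilizing orbit, repeat), explicitly deferring the termination of that algorithm to \cite{Birkett2022}. So your proposal must be judged on its own merits, and it has two genuine gaps. First, your description of a ``single round'' is too optimistic: after blowing up $p_0,\ldots,p_n$ there is no reason for the proper transform $\widetilde V$ to map birationally onto $E_0$, nor for $E_i$ to map birationally onto $E_{i+1}$, nor for $E_n$ to map onto a curve avoiding the new indeterminacy locus. The contraction can persist as an infinitely near singularity (the lift of $f$ still contracts $\widetilde V$ to a point \emph{on} $E_0$), exactly as happens in the paper's Example 2, where resolving the contraction of $\{y-z=0\}$ requires two blow-ups at $p_1$ and three at $p_2$. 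Moreover, each blow-up can create new contracted curves and new indeterminacy points, hence new destabilizing orbits; so a single bad orbit is not ``eliminated'' after one round, and the set of bad orbits is not simply shrinking.

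Second, and more seriously, the termination argument --- which is the entire content of the theorem --- is asserted rather than proved. You never define the invariant $\chi(f_X)$, never prove it is a nonnegative integer, and never prove it strictly decreases under a round of blow-ups. The appeal to the Hodge index theorem does not obviously produce a monovariant: the quantity $(f_X^*)^n[D]-(f_X^n)^*[D]$ lives on a surface whose Picard rank grows with every round, so a bound ``on $X$'' does not control the process across the changing sequence of surfaces $X_0, X_1, X_2, \ldots$. That this step is delicate is underscored by the history: the termination of precisely this blow-up procedure was only settled in \cite{Birkett2022}, two decades after \cite{DillerFavre2001}. To close the gap you would need to exhibit a concrete quantity (e.g.\ one built from the finitely many contracted curves of $f$ and $f^{-1}$ and the lengths of their orbits) and verify its monotone decrease under the lift, including the bookkeeping for newly created exceptional curves.
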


The following algorithm for finding an algebraically stable modification for a birational map $f$ on $\mbP^2$ was proposed in \cite{DillerFavre2001}; a proof that it terminates after a finite number of steps was given in \cite{Birkett2022}. To formulate the algorithm, one needs the notion of a minimal destablizing orbit.
\begin{de}
An orbit $p,f(p),...,f^{n}(p)$ is called \emph{destablizing} if $f^{-1}(p)$ is a curve and $f^{n}(p)\in \mathcal I(f)$. An orbit is called \emph{minimal destablizing} if, in addition, the points $p_j=f^j(p)$ for $0\le j\leq n$ are distinct, and we have $p_j\notin \mathcal I(f^{-1})$ for $0<j\leq n$ and  $p_j\notin \mathcal I(f)$ for  $0\leq j<n$.
\end{de}
Note that $f^{-1}(p)$ in this definition is exactly a degree lowering curve in the sense of Definition \ref{def deg lowering}. The following simple algorithm produces an algebraically stable map:
\begin{enumerate}
    \item Set $m=0$, $X_0=\mbP^2$ and $f_0=f:X_0\dasharrow X_0$.
    \item Induction in $m$: if $f_m:X_m\dasharrow X_m$ is not algebraically stable, pick a minimal destablizing orbit $p_0,\ldots,p_n$ (such an orbit always    exists), and blow up each of the points $p_j$ to produce the surface $X_{m+1}$; denote by $f_{m+1}:X_{m+1}\dasharrow X_{m+1}$ the lift of $f_m$ to $X_{m+1}$. Repeat until the resulting map is algebraically stable.
\end{enumerate}
We note that the blow-ups in this algorithm are performed in the different order from blow-ups in our algorithm in Section \ref{section first algorithm}. 

In the case when a birational map $f:\mbP^N\dasharrow\mbP^N$ can be lifted to an algebraically stable map $f_X:X\dasharrow X$, there exists another method for computing the degrees, which relates the degrees $\deg(f^n)$ to the induced linear action $f_X^*$ on the Picard group $\Pic(X)$ of the variety $X$. In particular, this method is always applicable in dimension $N=2$ (according to Theorem \ref{th Diller-Favre}).  Applications of this method in dimension $N=2$ are already abundant in the literature, see \cite{DillerFavre2001, Takenawa2001, Takenawa_2001b, Bedford_Diller_2005, Bedford_Diller_2006, Bedford_Kim_2006, McMullen_2007, Bedford_Kim_2010, Bedford_Kim_2011}. For applications in dimension $N>2$, we refer to \cite{Bedford_Kim_2004, Bedford_Kim_2014, Carstea_Takenawa_2019}.

Recall that for a variety $X$ one can define its Picard group $\Pic(X)$ as the free abelian group generated by the divisor classes on $X$. For a variety obtained from blowing ups on $\mbP^N$, one has an explicit description of $\Pic(X)$. If $X$ is obtained from $\mbP^N$ by a sequence of $n$ point blow-ups, and $\mathcal{E}_1,\ldots,\mathcal{E}_n$ are the total transforms of the centers of the blow-ups in $X$, then 
$\Pic(X)=\mathbb{Z}H\bigoplus\mathbb{Z}\mathcal{E}_1\bigoplus\ldots\bigoplus\mathbb{Z}\mathcal{E}_n$, where $H$ is the divisor class of the proper transform of a generic hyperplane in $\mbP^N$. The divisor $\mathcal{E}_i$ is called the exceptional divisor associated with the $i$-th blow-up. 

An important equivalent condition of algebraic stability is given in terms of ${\rm Pic}(X)$. Recall that every birational map $f_X$ on $X$ induces a linear action $f_X^*$ on $\Pic(X)$ via pull-back. 

\begin{pro}\label{AS-def}
A rational map $f_X:X\dasharrow X$ is algebraically stable if and only if $(f_X^n)^*=(f_X^*)^n$ for all $n\in\mathbb N$.
\end{pro}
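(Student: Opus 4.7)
The plan is to derive the statement from a composition lemma for pullbacks on the Picard group, which is essentially the divisor-class reformulation of Proposition \ref{deg lowering}.

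First, I would establish the following key lemma: for two birational self-maps $f, g: X \dasharrow X$, the identity $(f\circ g)^* = g^* \circ f^*$ holds on $\Pic(X)$ if and only if no irreducible hypersurface is contracted by $g$ into $\mathcal I(f)$. The idea is to resolve the indeterminacy of $g$ by a birational morphism $\sigma: Z \to X$ with a regular lift $\hat g: Z \to X$, and to compare the two expressions on an arbitrary class $D\in\Pic(X)$. Both can be written as $\sigma_*$ of a divisor on $Z$; any discrepancy is supported on the $\sigma$-exceptional locus, and a local analysis near a $\sigma$-exceptional divisor $E$ shows that a nonzero contribution arises precisely when $\hat g(E)\subset \mathcal I(f)$, with coefficient equal to the local index of $f^* D$ at that indeterminacy point in the sense of Theorem \ref{theorem split off}.

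With the composition lemma in hand, the direction ``$f$ algebraically stable $\Rightarrow (f_X^n)^* = (f_X^*)^n$'' follows by induction on $n$. The induction step reduces to $(f^{n+1})^* = (f^n)^* \circ f^*$, which the composition lemma yields as soon as $f^n$ does not contract any hypersurface into $\mathcal I(f)$. By algebraic stability, no such hypersurface can exist: if one did, tracing it back via the biregular restriction of $f$ away from $\mathcal C(f)$ (Proposition \ref{prop birational gen}\,c)) would produce a degree-lowering hypersurface in the sense of Definition \ref{def deg lowering}, contradicting algebraic stability. The reverse direction I would prove by contraposition: assuming $f$ is not algebraically stable, pick a degree-lowering hypersurface $A$ and the minimal $k\geq 0$ with $f^k(A) \subset \mathcal I(f)$. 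Applying the composition lemma to the factorization $f^{k+1} = f \circ f^k$ produces a non-vanishing correction supported on the relevant proper transform of $A$, breaking the identity $(f^{k+1})^* = (f^*)^{k+1}$ on a divisor class chosen to pass through the indeterminacy point with nontrivial local index.

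The main obstacle will be isolating the correction term in the composition lemma and verifying that it is genuinely nonzero. This demands careful bookkeeping on a common resolution of $f$, $g$, and $f\circ g$, together with a precise identification of how $f^* D$ acquires multiplicity at the points of $\mathcal I(f)$ --- effectively the Picard-theoretic avatar of the local index analysis in Section \ref{sect index}. Once that is controlled, the rest is a mechanical use of contravariant functoriality and induction.
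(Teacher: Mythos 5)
The paper does not actually prove Proposition \ref{AS-def}; it refers to \cite{DillerFavre2001} for $N=2$ and asserts that the argument generalizes. Your architecture --- a composition lemma characterizing when $(f\circ g)^*=g^*\circ f^*$ on $\Pic(X)$, then induction for one implication and contraposition for the other --- is exactly the Diller--Favre route, and your statement of the lemma (the identity holds iff no hypersurface is contracted by $g$ into $\mathcal I(f)$) is the correct criterion. Your reduction of ``some hypersurface is contracted by $f^n$ into $\mathcal I(f)$'' to the existence of a degree lowering hypersurface in the sense of Definition \ref{def deg lowering} also works: pass to the first iterate at which the image drops in dimension. One small quantifier slip: in the contraposition you must take $k$ minimal over \emph{all} degree lowering hypersurfaces, not just over the chosen $A$, so that $(f^{k})^*=(f^*)^{k}$ is already available when you break the $(k+1)$-st step.

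The genuine gap is in your justification of the composition lemma, specifically in where you locate the discrepancy. Writing $(f\circ g)^*D=\sigma_*A$ and $g^*(f^*D)=\sigma_*B$ with $A,B$ divisor classes on $Z$, the difference $A-B$ is supported on the divisorial part of $\hat g^{-1}(\mathcal I(f))$, which splits into (a) $\sigma$-exceptional divisors mapped by $\hat g$ into $\mathcal I(f)$, and (b) proper transforms of hypersurfaces $V\subset X$ with $g(V)\subset\mathcal I(f)$. The components of type (a) --- the ones your ``local analysis near a $\sigma$-exceptional divisor $E$'' singles out --- are annihilated by $\sigma_*$ and contribute nothing to the identity on $\Pic(X)$; if the discrepancy were really ``supported on the $\sigma$-exceptional locus,'' the identity $(f\circ g)^*=g^*\circ f^*$ would hold unconditionally and the ``only if'' direction of your lemma would be unprovable from your analysis. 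The obstruction is carried entirely by the components of type (b), i.e.\ by the non-$\sigma$-exceptional divisors of $Z$ contracted by $\hat g$, and it is precisely such a component (the proper transform of the degree lowering hypersurface) that must survive $\sigma_*$ in your contraposition step. Once the support is corrected, the remaining issue you already flag --- nonvanishing of the coefficient --- is handled by noting that for $p\in\mathcal I(f)$ the exceptional fibre of a resolution of $f$ over $p$ has positive-dimensional image, so a generic very ample $D$ meets that image and $f^*D$ passes through $p$ with multiplicity at least $1$, in the spirit of the local indices of Section \ref{sect index}.
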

We refer to \cite{DillerFavre2001} for a proof in the case of dimension $N=2$, where the arguments can be easily generalized to arbitrary dimensions.

Given a birational map $f$ on $\mbP^N$, lifted to an algebraically stable map $f_X$ on a blow-up variety $X$,
its degree $\deg(f)$ equals the coefficient of $H$ in the expansion of the divisor class $f_X^*H$ in the basis $H,\mathcal{E}_1,...,\mathcal{E}_n$, or, in other words, the $(H,H)$ entry of the matrix corresponding to the action $f_X^*$ in this basis. According  to Proposition \ref{AS-def}, we have:

\begin{pro}\label{AS-degrees}
For an algebraically stable rational map $f_X:X\dasharrow X$ which is a lift of a birational map $f:\mbP^N\dasharrow\mbP^N$ to a blow-up variety $X$, the degree $\deg(f^n)$ equals  the $(H,H)$ entry of the matrix $(f_X^*)^n$ relative to the basis $H,\mathcal{E}_1,...,\mathcal{E}_n$.
\end{pro}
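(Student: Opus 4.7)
My plan is to combine two facts: (i) the base case $n=1$, stated just before Proposition~\ref{AS-degrees}, asserting that $\deg(g)$ equals the $(H,H)$-entry of $g_X^*$ for any birational $g:\mbP^N\dasharrow\mbP^N$ admitting a well-defined lift $g_X$ to $X$; and (ii) Proposition~\ref{AS-def}, which, under algebraic stability, identifies $(f_X^n)^*$ with $(f_X^*)^n$ on $\Pic(X)$.

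First I would verify the base case in detail. Take a generic linear form $L$ on $\mbP^N$; its proper transform under $\pi$ represents the class $H\in\Pic(X)$. Genericity can be arranged so that $\{L=0\}$ avoids the finite set of points $p_i=f(\{K_i=0\})\in\mbP^N$. By the contrapositive of Proposition~\ref{lemma prefactor}, the polynomial $g^*L$ is then divisible by no $K_i$, so its proper pull-back equals $g^*L$ itself, a homogeneous polynomial of degree exactly $\deg(g)$. Writing
$$
g_X^*H=aH+\sum_i b_i\mathcal{E}_i
$$
and pushing forward along $\pi$, I would read off $a=\deg(g^*L)=\deg(g)$, using that $\pi_*\mathcal{E}_i=0$ (each $\mathcal{E}_i$ is contracted by $\pi$ to a subvariety of codimension $\ge 2$), so that the exceptional part contributes only to the $b_i$'s and not to the $H$-coefficient.

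Next, I would apply the base case to $g=f^n$. By functoriality of lifts to the blow-up variety (the lift of a composition of birational maps coincides, away from indeterminacy, with the composition of the lifts), the lift of $f^n$ to $X$ is exactly $f_X^n$. Consequently, $\deg(f^n)$ equals the $(H,H)$-entry of $(f_X^n)^*$. Algebraic stability of $f_X$, via Proposition~\ref{AS-def}, yields $(f_X^n)^*=(f_X^*)^n$ on $\Pic(X)$, and the claim follows at once.

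The main obstacle is concentrated in the base case: verifying that the $H$-coefficient of $g_X^*H$ equals precisely $\deg(g)$ requires a careful separation between the polynomial pull-back $g^*L$ and the divisor-class pull-back $g_X^*H$, and relies on the genericity of $L$, together with Proposition~\ref{lemma prefactor}, to exclude spurious $K_i$-factors that would otherwise migrate from the $aH$ term to the $b_i\mathcal{E}_i$ terms. Once this point is settled, the remainder of the proof is a one-line application of Proposition~\ref{AS-def}.
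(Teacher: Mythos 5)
Your proposal is correct and follows essentially the same route as the paper: the paper states the $n=1$ case (that $\deg(f)$ is the $(H,H)$-entry of $f_X^*$) just before the proposition and then derives the general case in one line from Proposition \ref{AS-def}, exactly as you do. Your more detailed verification of the base case via push-forward along $\pi$ and genericity of $L$ is a sound elaboration of what the paper leaves implicit.
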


We now give a computational interpretation of this result. For a homogeneous polynomial $P$ on $\mbC^{N+1}$, denote by $[P]$ the proper transform of the divisor $\{P=0\}\subset\mbP^N$ with respect to the modification $\pi:X\to\mbP^N$. In the above mentioned basis of $\Pic(X)$, we have an expansion for the divisor class of $[P]$:
\begin{equation}\label{div P}
[P]=\deg(P)H-\sum_{i=1}^{n}\mu_i(P)\mathcal{E}_i
\end{equation}
(this can be considered as the definition of the coefficients $\mu_i(P)$). 

\begin{pro}\label{pull-back divisor}
Suppose that an algebraically stable modification $f_X:X\dasharrow X$ of a birational map $f:\mbP^N\dasharrow\mbP^N$ is an automorphism. Then for the proper pull-back $\widetilde{P}$ of a homogeneous polynomial $P$ (see \eqref{def Ptilde}), we have:
\begin{equation}
    [\widetilde{P}]=f_X^*[P].
\end{equation}
\end{pro}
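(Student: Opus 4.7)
The plan is to reduce the claim to a local computation on a Zariski-dense open subset, and then use that both sides are prime divisors (for generic $P$) to conclude equality. By additivity in the prime decomposition of $P$, I may assume $\{P=0\}$ is irreducible, so $[P]$ is a prime divisor on $X$; for generic such $P$, the proper pull-back $\widetilde P$ is also irreducible and $[\widetilde P]$ is prime. Since $f_X$ is a biregular automorphism, the set-theoretic preimage $f_X^{-1}([P])$ is a prime divisor on $X$, which coincides with the pull-back $f_X^{*}[P]$ (with multiplicity one). Two prime divisors on $X$ that agree on any nonempty open subset must then be equal, so the problem reduces to a local calculation on a suitably chosen dense open set.

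Let $U=X\setminus\bigcup_i\mathcal{E}_i$, on which $\pi$ is an isomorphism onto the complement of the blow-up centres, and set $V=U\setminus\bigcup_i[K_i]$, where $[K_i]$ denotes the proper transform of $\{K_i=0\}$ on $X$. Then $\pi(V)\subset\mbP^N\setminus\mathcal{C}(f)$, so by Proposition~\ref{prop birational gen}(c) the map $f$ is biregular on $\pi(V)$ and maps it into $\mbP^N\setminus\mathcal{C}(f^{-1})$, which in particular excludes all points $p_i\in\mathcal{I}(f^{-1})$. Removing from $V$ in addition the $\pi$-preimages of the remaining (finitely many) blow-up centres strips off only a subvariety of codimension $\ge 2$ (here $N\ge 2$ matters), yielding a Zariski-dense open subset $V'\subset V$ on which $f_X$ is defined and satisfies $f_X(V')\subset U$.

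On $U$ the divisor $[P]$ has local equation $P$, read off in the affine coordinates of $\mbP^N$ pulled back through $\pi$. Hence on $V'$ the local equation of $f_X^{*}[P]$ is
\[
P\circ f_X \;=\; P\circ f \;=\; f^{*}P \;=\; K_1^{\nu_1}\cdots K_r^{\nu_r}\,\widetilde P.
\]
Since each $K_i$ is a nowhere vanishing regular function on $V$, the prefactor $K_1^{\nu_1}\cdots K_r^{\nu_r}$ is a local unit on $V'$, so the divisor of $P\circ f_X$ on $V'$ coincides with the divisor of $\widetilde P$ on $V'$, namely $[\widetilde P]\cap V'$. Thus $f_X^{*}[P]$ and $[\widetilde P]$ are prime divisors on $X$ sharing a nonempty intersection with the dense set $V'$, and they therefore coincide on all of $X$. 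The only real subtlety is arranging that $V'$ may be chosen so that $f_X(V')\subset U$, which is handled by the codimension-$2$ exclusion above; everything else is a routine consequence of the identity $f^{*}P = K_1^{\nu_1}\cdots K_r^{\nu_r}\widetilde P$.
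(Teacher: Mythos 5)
Your argument is correct in substance and rests on the same two pillars as the paper's proof: reduction to irreducible $P$ by linearity, and the fact that the pull-back of a prime divisor under the automorphism $f_X$ is again a prime divisor with multiplicity one. Where you differ is in the final identification. The paper argues globally: it reads \eqref{def Ptilde} as a decomposition of $[f^*P]$ into $[\widetilde P]$, the $[K_i]$ and exceptional classes, and declares $f_X^*[P]$ to be ``the unique component depending on $P$''. You instead compute the local equation of $f_X^*[P]$ on a dense open set $V'$ avoiding the exceptional divisors and the $[K_i]$, observe that the prefactor $K_1^{\nu_1}\cdots K_r^{\nu_r}$ is a unit there, and extend by primeness; this is the more explicit route and it makes precise what ``depending on $P$'' actually means. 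One caveat applies to both proofs, and you at least flag it with your phrase ``for generic such $P$'': as literally stated, the proposition can fail for special irreducible $P$. For instance, in Example 1 take $P$ to be the defining polynomial of the line $(A_2A_3)\subset\mathcal C(f^{-1})$; then $f^*P=cK_2K_3$, so $\widetilde P$ is a nonzero constant and $[\widetilde P]=0$, while $f_X^*[P]=\mathcal{E}_7\neq 0$. In exactly this degenerate case your prime divisor $f_X^{-1}([P])$ is an exceptional divisor, never meets $V'$, and the local comparison becomes vacuous --- which is why your genericity hypothesis is not cosmetic but genuinely needed (the same hidden assumption underlies the paper's ``component depending on $P$'' step). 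Since the proposition is only applied to iterated proper pull-backs of a generic $P_0$ in Corollary \ref{cor system d mu}, the restriction is harmless, but it deserves to be stated explicitly in the hypotheses.
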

\begin{proof} 
After lifting $f$ to an automorphism $f_X$, formula \eqref{def Ptilde} could be interpreted as a correspondence between divisors under the pull back of $f_X$:
$$
[f^*P]=[\widetilde P]+\sum_{i=1}^n\nu_i[K_i]+\text{linear combination of exceptional divisors}.
$$
By linearity of $f_X^*$, it suffices to consider the case when $P$ is an irreducible polynomial. 
In this case the proper transform $[P]$ is an irreducible divisor in $X$, whose pull back under $f_X^*$ must be an irreducible divisor as well, since $f_X$ is an automorphism. Thus, $f_X^*[P]$ must be an irreducible component of the right-hand side depending on $P$. The only such component is $[\widetilde P]$.
\end{proof}

\begin{cor}\label{cor system d mu}
For the sequence of iterated proper pull-backs of a homogeneous polynomial $P_0$, we have:
$$
[P_{n+1}]=f_X^*[P_n].
$$
As a consequence, the matrix of the linear system which expresses the coefficients $\deg(P_{n+1})$, $\mu_i(P_{n+1})$ through the coefficients $\deg(P_n)$, $\mu_i(P_n)$ is nothing but the matrix of the map $f_X^*:\Pic(X)\to\Pic(X)$ relative to the above mentioned basis, conjugated by ${\rm diag}(1,-1,\ldots,-1)$.
\end{cor}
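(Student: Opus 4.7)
The plan is to derive both assertions directly from Proposition \ref{pull-back divisor}. The first identity $[P_{n+1}]=f_X^*[P_n]$ follows by straightforward induction on $n$: the base case is vacuous, and for the inductive step the recursive definition \eqref{def Pn} tells us that $P_{n+1}$ is precisely the proper pull-back of $P_n$ in the sense of Definition \ref{factorization under pullback}, so the proposition applied to $P=P_n$ yields $[P_{n+1}]=f_X^*[P_n]$ at once.

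For the matrix statement I would, using the expansion \eqref{div P}, write the coordinate column of $[P_n]$ in the basis $H,\mathcal{E}_1,\ldots,\mathcal{E}_n$ as $(\deg(P_n),-\mu_1(P_n),\ldots,-\mu_n(P_n))^{T}$. Let $M$ denote the matrix of $f_X^*$ relative to this same basis. By the first part together with the linearity of $f_X^*$, the coordinate column of $[P_{n+1}]$ equals $M$ times that of $[P_n]$; reading off the entries gives the linear system relating $(\deg(P_{n+1}),-\mu_i(P_{n+1}))$ to $(\deg(P_n),-\mu_i(P_n))$.

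To turn this into the recurrence for the unsigned quantities $(\deg(P_n),\mu_1(P_n),\ldots,\mu_n(P_n))$ as stated, set $D=\mathrm{diag}(1,-1,\ldots,-1)$; since $D^{-1}=D$, multiplying the previous system on both sides by $D$ replaces $M$ by $DMD$, which is the claimed conjugate of $M$. The argument is almost purely sign bookkeeping, so I do not foresee a real obstacle; the only care required is to keep the minus signs on the exceptional coordinates consistent between \eqref{div P} and the coordinate column, so that the conjugation by $D$ emerges on the correct side and with the right sign convention.
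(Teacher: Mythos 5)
Your proposal is correct and follows exactly the route the paper intends: the identity $[P_{n+1}]=f_X^*[P_n]$ is an immediate application of Proposition \ref{pull-back divisor} with $P=P_n$ (no genuine induction is even needed, since Definition \ref{inductive factorization} makes $P_{n+1}$ the proper pull-back of $P_n$ at each step), and the matrix statement is then the sign bookkeeping via conjugation by $\mathrm{diag}(1,-1,\ldots,-1)$ that you carry out. The only implicit point worth keeping in mind is that the corollary inherits the hypothesis of Proposition \ref{pull-back divisor} that $f_X$ is an automorphism (or pseudo-automorphism).
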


\begin{re}\label{remark automorphism}
In dimension $N=2$, the Picard group carries an additional structure of a non-degenerate scalar product of the signature $(1,n)$, the so called intersection number, defined by
$$
H\cdot H=1,\quad H\cdot \mathcal E_i=0, \quad \mathcal E_i\cdot \mathcal E_j=-\delta_{ij}.
$$ 
In terms of the intersection product of divisors, the definition of the coefficients in \eqref{div P} can be read as follows:
$$
\deg(P)=[P]\cdot H, \quad \mu_i(P)=[P]\cdot \mathcal{E}_i.
$$
If $f_X$ is an automorphism (this is equivalent to the absence of contracted curves), then $f_X^*$ preserves the intersection product, i.e., is Minkowski-orthogonal, and the matrix of $f_X^*$, conjugated by ${\rm diag}(1,-1,\ldots,-1)$, is nothing but the transposed matrix of $(f_X)_*$ relative to the basis $H,\mathcal{E}_1,...,\mathcal{E}_n$.
\end{re}

\begin{re}
The statement (and the proof) of Proposition \ref{pull-back divisor} remain valid, if $f$ can be lifted to a pseudo-automorphism (recall that a birational map $f_X:X\dasharrow X$ is called a pseudo-automorphism if neither $f_X$ nor $f_X^{-1}$ contract varieties of codimension 1).
\end{re}


\section{Comparison of  two methods for computing $\deg(f^n)$}
\label{sect two methods}

Of course, the two methods for computing $\deg(f^n)$ presented in the previous two sections are intimately related, though not always quite directly.
To clarify the relation, we give an interpretation of the coefficients $\mu_i(P)$ as indices related to elementary blow-ups. It will be  convenient to change the enumeration of the exceptional divisors $\mathcal E_i$ in the following way. 

For a component $\{K=0\}\subset\mathcal C(f)$ contracted to a point $p$, the resolution $\phi$ of this singularity can be obtained as a superposition of elementary blow-ups $\pi{(k)}: X^{(k)}\to X^{(k-1)}$, $k=1,\ldots,m$, with $X^{(0)}=\mbP^N$ (in the case $N=2$ they all are sigma-processes, i.e., point blow-ups):
$$
\phi=\pi^{(1)}\circ\ldots\circ \pi^{(m)}.
$$
 Set
$$
\mathcal E^{(k)}=\text{total transform of the center of the elementary blow-up}\;\; \pi^{(k)}, \quad k=1,\ldots,m. 
$$
The union of $\{\mathcal E^{(k)}\}_{k=1,\ldots,m}$ over all components of the critical set contracted to points, coincides with the set $\{\mathcal E_i\}_{i=1,\ldots,n}$.

Let the blow-ups $\pi^{(k)}$ be given by affine charts
\begin{align*}
[1:x_1:x_2:\ldots:x_N] & =\pi^{(1)}(u_1^{(1)},\ldots,u_N^{(1)}) \;\text{(provided}\;\; x_0\neq 0 \;\;\text{at}\;\; p),\\
(u_1^{(k-1)},\ldots, u_N^{(k-1)}) & =\pi^{(k)}(u_1^{(k)},\ldots,u_N^{(k)}), \quad k=2,\ldots,m,
\end{align*}
with the (local) exceptional divisor defined by $\{u_1^{(k)}=0\}$. Define the indices $\mu_k(P)$ by
\begin{align}
P(\pi^{(1)}(u_1^{(1)},\ldots,u_N^{(1)})) & =(u_1^{(1)})^{\mu_1(P)} P^{(1)}(u_1^{(1)},\ldots,u_N^{(1)}), \label{local blow up index 1} \\
P^{(k-1)}(\pi^{(k)}(u_1^{(k)},\ldots,u_N^{(k)})) & =(u_1^{(k)})^{\mu_k(P)} P^{(k)}(u_1^{(k)},\ldots,u_N^{(k)}), \quad k=2,\ldots,m,  \label{local blow up index k}
\end{align}
where we always assume that $P^{(k)}(0,u_2^{(k)},\ldots,u_N^{(k)})\not\equiv 0$.
\begin{pro}
The indices $\mu_1(P), \ldots,\mu_m(P)$ defined above coincide with the coefficients by $\mathcal E^{(1)},\ldots ,\mathcal E^{(m)}$ in the expansion \eqref{div P}.
\end{pro}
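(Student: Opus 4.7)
The plan is to establish, by induction on $k=1,\ldots,m$, the divisor identity
$$
f_k^{\,*}\,[\{P=0\}] \;=\; [P^{(k)}] \;+\; \sum_{j=1}^{k} \mu_j(P)\, \mathcal{E}^{(j)}_{X^{(k)}} \quad \text{in } \Pic(X^{(k)}),
$$
where $f_k := \pi^{(1)}\circ\cdots\circ\pi^{(k)}:X^{(k)}\to\mbP^N$, the divisor $[P^{(k)}]$ is the proper transform of $\{P=0\}$ in $X^{(k)}$ (locally cut out by $P^{(k)}$), and $\mathcal{E}^{(j)}_{X^{(k)}}$ denotes the total transform in $X^{(k)}$ of the centre $c^{(j)}$ of $\pi^{(j)}$. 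Specialising to $k=m$ gives $X^{(m)}=X$, $\mathcal{E}^{(j)}_{X^{(m)}}=\mathcal{E}^{(j)}$, $[P^{(m)}]=[P]$; moreover $f_m^{\,*}[\{P=0\}]=\deg(P)\cdot H$, since a generic hyperplane avoids all blow-up centres and hence its total and proper transforms both coincide with $H$. Rearranging produces exactly the expansion \eqref{div P}, identifying the coefficient of $\mathcal{E}^{(k)}$ with $\mu_k(P)$.

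The base case $k=1$ is simply \eqref{local blow up index 1} read divisorially: the local equation $P(\pi^{(1)}(u^{(1)})) = (u_1^{(1)})^{\mu_1(P)} P^{(1)}$ on $X^{(1)}$ says that $(\pi^{(1)})^*[\{P=0\}]$ consists of the proper transform $[P^{(1)}]$ plus $\mu_1(P)$ copies of the exceptional divisor $\{u_1^{(1)}=0\} = \mathcal{E}^{(1)}_{X^{(1)}}$. For the inductive step, I invoke the standard blow-up formula for a smooth centre $c^{(k)}\subset X^{(k-1)}$ and any effective divisor $D$ on $X^{(k-1)}$:
$$
(\pi^{(k)})^* D \;=\; \widetilde D \;+\; \bigl(\mathrm{mult}_{c^{(k)}}D\bigr)\, E^{(k)},
$$
where $\widetilde D$ is the proper transform of $D$ in $X^{(k)}$ and $E^{(k)}$ is the exceptional divisor of $\pi^{(k)}$. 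Since $(\pi^{(k)})^*\mathcal{E}^{(j)}_{X^{(k-1)}} = \mathcal{E}^{(j)}_{X^{(k)}}$ for $j<k$ by the very definition of the total transform, applying the formula to $D = [P^{(k-1)}]$ and noting $\mathcal{E}^{(k)}_{X^{(k)}} = E^{(k)}$ carries the inductive hypothesis from $X^{(k-1)}$ to $X^{(k)}$.

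The one technical point worth emphasising, and really the main thing to verify, is that the local index $\mu_k(P)$, defined in \eqref{local blow up index k} as the order of vanishing of $P^{(k-1)}\circ\pi^{(k)}$ along $\{u_1^{(k)}=0\}$, coincides with $\mathrm{mult}_{c^{(k)}}[P^{(k-1)}]$. For a sigma-process at a point this is exactly the content of Remark~\ref{remark multiplicity}. For blow-ups along smooth higher-dimensional centres the same identity holds in any standard affine chart of the type used in \cite{Shafarevich2013}: if $(y_1,\ldots,y_r,z_1,\ldots,z_{N-r})$ are local coordinates on $X^{(k-1)}$ with $c^{(k)}=\{y_1=\cdots=y_r=0\}$ and the blow-up chart is $y_1=u_1^{(k)}$, $y_i=u_1^{(k)}v_i$ for $i\geq 2$, then the pullback of a polynomial $F(y,z)$ vanishes along $\{u_1^{(k)}=0\}$ to the order equal to the smallest total degree in $(y_1,\ldots,y_r)$ occurring in $F$, which is precisely the multiplicity of $\{F=0\}$ along $c^{(k)}$. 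Granting this identification, the induction closes and the proposition follows.
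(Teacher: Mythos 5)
Your proof is correct and follows essentially the same route as the paper: an induction over the tower of elementary blow-ups, reading each local equation \eqref{local blow up index 1}, \eqref{local blow up index k} as the divisor identity ``total transform $=$ proper transform $+\,\mu_k(P)\cdot$ exceptional divisor'' and pushing the decomposition down to $\Pic(X)$. The only difference is your detour through $\mathrm{mult}_{c^{(k)}}[P^{(k-1)}]$, which is harmless but unnecessary, since \eqref{local blow up index k} already defines $\mu_k(P)$ as the order of vanishing of the pulled-back local equation along $\{u_1^{(k)}=0\}$, i.e.\ directly as the coefficient of $E^{(k)}$ in $(\pi^{(k)})^*[P^{(k-1)}]$.
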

\begin{proof} We interpret the formulas \eqref{local blow up index 1}, \eqref{local blow up index k} as expressing the total transform (with respect to $\pi^{(k)}$) of the divisor $\{P^{(k-1)}=0\}\subset X^{(k-1)}$ through the proper transform described as $\{P^{(k)}=0\}\subset X^{(k)}$ and the exceptional divisor associated to the blow-up $\pi^{(k)}$. For the equivalence classes of these divisors we find:
\begin{small}
\begin{eqnarray*}
\lefteqn{{\big[\text{proper transform of} \;\; \{P=0\}\subset \mbP^N} \;\;\text{w.r.t.}\;\; \pi^{(1)} \big]} \\
& = & \big[\text{total transform of}\;\; \{P=0\}\subset \mbP^N \;\;\text{w.r.t.}\;\; \pi^{(1)}\big] - \mu_1(P)\cE_1, \\ \\
\lefteqn{\big[\text{proper transform of}\;\; \{P^{(k-1)}=0\}\subset X^{(k-1)} \;\;\text{w.r.t.} \;\;\pi^{(k)}\big]} \\
& = & \big[\text{total transform of} \;\;\{P^{(k-1)}=0\}\subset X^{(k-1)} \;\;\text{w.r.t.} \;\;\pi^{(k)}\big] - \mu_k(P)\cE_k\\
& = & \ldots \; \text{(induction)} \\
& = & \big[\text{total transform of}\;\; \{P=0\}\subset \mbP^N\;\; \text{w.r.t.} \;\;\pi^{(1)}\circ\ldots\circ\pi^{(k)}\big] - \mu_k(P)\cE_k-\ldots-\mu_1(P)\cE_1\\
& = & \deg(P)H- \mu_k(P)\cE_k-\ldots-\mu_1(P)\cE_1.
\end{eqnarray*}
\end{small}Here, for example $\cE_1$ in the first equation stands for the exceptional divisor of the blow $\pi^{(1)}$, and in the further equations, by an abuse of notation, for the total transforms of this subvariety with respect to all further $\pi^{(k)}$.
\end{proof}
For a simple singularity (with $m=1$) the index $\mu_1(P)$ coincides with the previously defined $\nu_\phi(P)$ (and with the multiplicity of the point $p$ on the divisor $\{P=0\}$). In more complicated situations, the index $\nu_\phi(P)$ is an integer linear combination of $\mu_1(P),\ldots,\mu_m(P)$ whose coefficients depend on the details of the blow-up structure. See, for instance, Proposition \ref{prop Ex2 nu from mu} for the Example 2 below.

This relation between indices $\mu_i(P)$ and $\nu_\phi(P)$ gives an insight into the relation between the two methods for computing $\deg(f^n)$. If the algebraically stable modification $f_X$ is an automorphism, then the first method gives, as a rule, a smaller system of recurrent equations for $d(n)$ and $\nu_{\phi}(P)$, since it only works with the ``top level'' indices $\nu_{\phi}$, while the second method works with all indices $\mu_i$ for the elementary blow-ups. One can consider the smaller system as a consequence of the bigger one, naturally producing the same sequences $d(n)$. However, if $f_X$ is not an automorphism, the relation is more delicate. Our method requires to blow up all critical components, also non-degree-lowering ones. Under certain circumstances, this could mean even infinitely many blow-ups, compare Example 3 in Section \ref{sect linearizable}. It can be expected that the resulting system of recurrent relations, being finite due to algebraic stability, can be reduced to the system from the second method, but we will not show this in general.


\section{Example 1: an integrable quadratic family}
\label{sect example 1}

The goal of the example of this section is to illustrate the simplest situation of a birational map of $\mbP^2$ which can be modified to a surface automorphism, where the modification only requires to resolve simple singularities, so that all involved blow-ups are just  sigma-processes at distinct (not infinitesimally near) points.

\subsection{Definition of the map}
\label{section Ex1 def}

A fundamental birational planar map is the \emph{quadratic involution} $\sigma: \mbP^2\dasharrow\mbP^2$ given in homogeneous coordinates by
\begin{equation}
\sigma: [y_1:y_2:y_3]\mapsto [1/y_1:1/y_2:1/y_3]=[y_2y_3:y_1y_3:y_1y_2].
\end{equation}
We have:
\begin{equation}
\mathcal I(\sigma)=\{q_1, q_2, q_3\},\quad \mathcal C(\sigma)=\ell_1\cup\ell_2\cup\ell_3,
\end{equation}
where
\begin{equation}
q_1=[1:0:0], \quad q_2=[0:1:0], \quad q_3=[0:0:1],
\end{equation}
and 
\begin{equation}
\ell_1=\{y_1=0\}=(q_2q_3), \quad \ell_2=\{y_2=0\}=(q_1q_3), \quad \ell_3=\{y_3=0\}=(q_1q_2).
\end{equation}

Moreover, $\sigma$ blows up the points $q_i$ to the lines $\ell_i$, and blows down the lines $\ell_i$ to the points $q_i$. If one performs the standard sigma-processes at the points $q_i$ with exceptional divisors $E_i$, then the lift of $\sigma$ to the resulting blow-up surface $X$ is an automorphism exchanging the proper images of $\ell_i$ with $E_i$.

While the involution $\sigma$ is dynamically trivial, the following family contains highly non-trivial dynamical systems:
\begin{equation}\label{quadratic Cremona} 
f=L_1\circ\sigma\circ L_2^{-1},
\end{equation}
where $L_1$, $L_2$ are linear projective automorphisms of $\mbP^2$. According to \cite{Cerveau_Deserti_2013}, formula \eqref{quadratic Cremona} represents a generic planar birational map of degree 2. See also \cite{Bedford_Kim_2004,Diller2011,Blanc_Heden_2015} for general studies of maps of this class and their higher dimensional generalizations.

Clearly, for such a map
\begin{equation}
 \mathcal I(f)=\{L_2q_1, L_2q_2, L_2q_3\}, \quad  \mathcal C(f)=L_2\ell_1\cup L_2\ell_2\cup L_2\ell_3,
\end{equation}  
while
\begin{equation}
\mathcal I(f^{-1})=\{L_1q_1, L_1q_2, L_1q_3\},  \quad  \mathcal C(f^{-1})=L_1\ell_1\cup L_1\ell_2\cup L_1\ell_3.
\end{equation}
Moreover, $f$ blows up the points $L_2q_i$ to the lines $L_1\ell_i$, and blows down the lines $L_2\ell_i$ to the points $L_1q_i$. If we denote $A_i:=L_1q_i$ and $C_i:=L_2q_i$, then we have: $L_1\ell_i=(A_jA_k)$ and $L_2\ell_i=(C_jC_k)$, where $\{i,j,k\}=\{1,2,3\}$.

We consider here the dynamics of maps of the class \eqref{quadratic Cremona}  characterized by the following condition: \emph{all three critical lines $L_2\ell_i=(C_jC_k)$ are degree lowering curves whose images hit the corresponding $C_i=L_2q_i\in \mathcal I(f)$ after three iterations each:}
\begin{equation}\label{2d-Ex1-SingConf}
f: \; (C_jC_k) \,\rightarrow\, A_i \, \rightarrow \, B_i\, \rightarrow \, C_i \,\rightarrow \, (A_jA_k), \quad i=1,2,3, \quad \{i,j,k\}=\{1,2,3\}.
\end{equation}
In the terminology of \cite{Diller2011}, the orbit type of such a map is given by the triple of orbit lengths $(n_1,n_2,n_3)=(3,3,3)$ and the permutation $(\sigma_1,\sigma_2,\sigma_3)=(1,2,3)$. So, the condition is
$$
(L_1\sigma L_2^{-1})(L_1\sigma L_2^{-1})L_1q_i=L_2q_i, \quad i=1,2,3.
$$
This condition is equivalent to
$$
(L_2^{-1}L_1) \sigma (L_2^{-1}L_1) \sigma (L_2^{-1}L_1)={\rm id}.
$$

A geometric construction of a family of maps satisfying this condition, based on Manin involutions for pencils of cubic curves, was proposed in \cite{PSWZ2021}. In this construction, the six points $A_i$ and $C_i$ lie on a conic, while the three points $B_i$ are defined by
$$
B_1=(A_2C_3)\cap (A_3C_2), \quad B_2=(A_3C_1)\cap (A_1C_3), \quad B_3=(A_1C_2)\cap (A_2C_1),
$$
and are collinear by the Pascal theorem. Presumably, all maps \eqref{quadratic Cremona} with the singularity confinement patterns as in \eqref{2d-Ex1-SingConf} are covered by this geometric construction. An early example of such maps was found in \cite{PenroseSmith}:
$$
f:\;\left[\begin{array}{l} x_0 \\ x_1 \\ x_2 \end{array}\right] \; \mapsto \; \left[\begin{array}{l} x_0(x_0+ax_1+a^{-1}x_2) \\ x_1(x_1+ax_2+a^{-1}x_0) \\ x_2(x_2+ax_0+a^{-1}x_1) \end{array}\right].
$$
One can easily check that  this map can be represented in the form \eqref{quadratic Cremona} with
$$
L_1=\begin{pmatrix} 0 & -a & 1 \\ 1 & 0 & -a \\ -a & 1 & 0 \end{pmatrix}, \quad 
L_2=\begin{pmatrix} 0 & -1 & a \\ a & 0 & -1 \\ -1 & a & 0 \end{pmatrix}.
$$

\subsection{Degree computation based on local indices}

Let us blow up $\mbP^2$ at the nine points $A_i$, $B_i$, $C_i$, $i=1,2,3$, via standard sigma-processes and denote these sigma-processes by $\phi_i$, $\phi_{i+3}$ and $\phi_{i+6}$, and the corresponding exceptional divisors by $\cE_i$, $\cE_{i+3}$ and $\cE_{i+6}$, respectively. We obtain a surface $X$, such that the map $f$ is lifted to an automorphism $f_X$ on $X$. Define, according to \eqref{def Pn},
\begin{equation}\label{2d-Ex1_Pn}
    f^*P_n=K_1^{\nu_1(n)}K_2^{\nu_2(n)}K_3^{\nu_3(n)}P_{n+1},
\end{equation}
where $K_i$ is the (linear) defining polynomial of the line $L_2\ell_i=(C_jC_k)$. Here, $\nu_i(n)$ are the indices of the polynomial $P_n$ with respect to the blow-up $\phi_i$, $i=1,2,3$. We adopt the same definition for $\nu_i(n)$ with $i=4,\ldots,9$, and denote also
$$
d(n)=\deg(P_n).
$$
\begin{theo} \label{theorem Ex1 degrees}
The degrees $d(n)$ and indices $\nu_i(n)$ satisfy the following recurrent relations: 
\begin{eqnarray} 
d(n+1) & = & 2d(n)-\nu_1(n)-\nu_2(n)-\nu_3(n),   \label{2d-Ex1-deg} \\
\nu_1(n+1) & = & \nu_4(n),  \label{2d-Ex1-deg-1 1} \\
\nu_2(n+1) & = & \nu_5(n),  \label{2d-Ex1-deg-1 2} \\
\nu_3(n+1) & = & \nu_6(n),  \label{2d-Ex1-deg-1 3} \\
\nu_4(n+1) & = & \nu_7(n),  \label{2d-Ex1-deg-1 4} \\
\nu_5(n+1) & = & \nu_8(n),  \label{2d-Ex1-deg-1 5} \\
\nu_6(n+1) & = & \nu_9(n),  \label{2d-Ex1-deg-1 6} \\
\nu_7(n+1) & = & d(n)-\nu_2(n)-\nu_3(n), \label{2d-Ex1-deg-2 1} \\
\nu_8(n+1) & = & d(n)-\nu_1(n)-\nu_3(n), \label{2d-Ex1-deg-2 2} \\
\nu_9(n+1) & = & d(n)-\nu_1(n)-\nu_2(n). \label{2d-Ex1-deg-2 3} 
\end{eqnarray}
If $P_0$ is a generic linear polynomial, then 
\begin{equation}\label{2d-Ex1 dn}
d(n)=\deg(f^n)=\frac{3}{4}n^2+\frac{9+(-1)^{n+1}}{8}.
\end{equation}
\end{theo}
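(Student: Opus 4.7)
The plan is to derive each of the ten recurrences from the propagation theory of Section \ref{sect propagate}, and then to solve the resulting finite system by exploiting its $\mbZ/3$-symmetry.

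I would start by identifying the type of each orbit point along $A_i \to B_i \to C_i$. For a generic configuration (in particular, for the Penrose--Smith realization), the points $A_i$ and $B_i$ lie outside $\mathcal C(f) = L_2\ell_1 \cup L_2\ell_2 \cup L_2\ell_3$, so the propagations $A_i \to B_i$ and $B_i \to C_i$ are governed by Case 1 of Section \ref{sect propagate} (formula \eqref{propagation regular}); this yields directly \eqref{2d-Ex1-deg-1 1}--\eqref{2d-Ex1-deg-1 6}. The terminal point $C_i$ lies in $\mathcal I(f)$, and the exceptional divisor over $C_i$ is mapped by $f$ birationally onto the component $L_1\ell_i = (A_jA_k)$ of $\mathcal C(f^{-1})$; this is the confined-singularity scenario of Case 3(a), where \eqref{propagation inverse critical} applies. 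Since each $C_i$ is a simple indeterminacy point resolved by a single sigma-process, Remark \ref{remark multiplicity} gives $\nu_{\phi_{i+6}}(K_j) = 1 - \delta_{ij}$ (the point $C_i$ lies on $\{K_j = 0\}$ precisely when $j \ne i$), and a short local computation with $f = L_1 \circ \sigma \circ L_2^{-1}$ reduces, since $L_1$ and $L_2$ are linear, to the standard model of $\sigma$ at one of its base points $q_i$; it shows that the minimal lift composed with the sigma-process drops exactly one power of the exceptional parameter, i.e.\ $s = 1$. Substitution into \eqref{propagation inverse critical} produces \eqref{2d-Ex1-deg-2 1}--\eqref{2d-Ex1-deg-2 3}, while \eqref{2d-Ex1-deg} is the generic degree-drop formula with $d_+ = 2$ and $\deg(K_i) = 1$.

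For the closed form, I would exploit the invariance of the whole construction under the cyclic permutation $1 \mapsto 2 \mapsto 3$ of labels: for a generic linear $P_0$ the nine initial indices all vanish, and induction on $n$ using the ten recurrences shows that $\nu_1(n) = \nu_2(n) = \nu_3(n) =: \alpha(n)$, $\nu_4(n) = \nu_5(n) = \nu_6(n) =: \beta(n)$, $\nu_7(n) = \nu_8(n) = \nu_9(n) =: \gamma(n)$. The system collapses to
\begin{equation*}
d(n+1) = 2d(n) - 3\alpha(n), \quad \alpha(n+1) = \beta(n), \quad \beta(n+1) = \gamma(n), \quad \gamma(n+1) = d(n) - 2\alpha(n).
\end{equation*}
Eliminating $\beta$ and $\gamma$ gives $\alpha(n+3) + 2\alpha(n) = d(n)$; combining this with the first relation yields a single scalar fourth-order recurrence $d(n+4) - 2d(n+3) + 2d(n+1) - d(n) = 0$, whose characteristic polynomial factors as $(x-1)^3(x+1)$. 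Hence $d(n) = An^2 + Bn + C + D(-1)^n$, and matching the initial values $d(0), d(1), d(2), d(3) = 1, 2, 4, 8$ (computed directly from the recurrences) gives $A = 3/4$, $B = 0$, $C = 9/8$, $D = -1/8$, which rearranges into \eqref{2d-Ex1 dn}. The only genuine calculational step in the entire argument is the verification that $s = 1$ at $C_i$; this is the one point where the explicit form of $f$ enters, and is not just formal manipulation of the propagation rules.
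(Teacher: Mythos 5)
Your derivation matches the paper's proof in all essentials: the shift relations \eqref{2d-Ex1-deg-1 1}--\eqref{2d-Ex1-deg-1 6} come from biregularity of $f$ at $A_i$, $B_i$, the relations \eqref{2d-Ex1-deg-2 1}--\eqref{2d-Ex1-deg-2 3} come from the local computation $\tilde f\circ\phi_{i+6}=L_1\,[\,u^2v:uv:u\,]$ giving $s=1$ in \eqref{propagation inverse critical} together with the incidences $C_i\in\{K_j=0\}\Leftrightarrow j\neq i$, and \eqref{2d-Ex1-deg} is immediate from the factorization. Your closed-form argument (the $\mbZ/3$ symmetry reduction, elimination to $d(n+4)-2d(n+3)+2d(n+1)-d(n)=0$ with characteristic polynomial $(x-1)^3(x+1)$, and matching of $d(0),\ldots,d(3)=1,2,4,8$) is correct and makes explicit the induction that the paper only asserts; the scalar recurrence you obtain is exactly the one recorded in the remark following the theorem.
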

\begin{proof} Equation \eqref{2d-Ex1-deg} follows directly from \eqref{2d-Ex1_Pn}, and \eqref{2d-Ex1-deg-1 1}--\eqref{2d-Ex1-deg-1 6} are clear, because $f$ acts biregularly at the points $A_i$ and $B_i$. It remains to prove \eqref{2d-Ex1-deg-2 1}--\eqref{2d-Ex1-deg-2 3}. Since all three relations are proved in the same way, we restrict ourselves to the first one of them. We have to determine the behavior of the polynomial
\begin{equation}\label{2d-Ex1 P n+1}
P_{n+1}(x,y,z)=K_1^{-\nu_1(n)}K_2^{-\nu_2(n)}K_3^{-\nu_3(n)}P_n\big(f(x,y,z)\big)
\end{equation}
upon the blow-up $\phi_7$ at $C_1=L_2q_1$, as $u_7\to 0$. 
We can take 
$$
\phi_7(u_7,v_7)=L_2\begin{bmatrix} 1\\ u_7\\ u_7v_7 \end{bmatrix} \quad \Rightarrow \quad 
\tilde f\circ \phi_7=L_1\begin{bmatrix} u_7^2v_7 \\ u_7v_7 \\ u_7\end{bmatrix}
$$
Thus, three components of $\tilde f\circ \phi_7$ are simultaneously  divisible by $u_7$ (and by no higher degree of $u_7$). Therefore, 
$$
P_n(\tilde f\circ \phi_7)\sim u_7^{\deg(P_n)}.
$$ 
Here and below, the symbol $\sim$ relates two quantities differing by a non-vanishing factor (in this case, as $u_7\to 0$). 
The line $(C_2C_3)=\{K_1=0\}$ does not pass through $C_1$, while the lines $(C_1C_3)=\{K_2=0\}$ and $(C_1C_2)=\{K_3=0\}$ do pass through $C_1$, therefore
$$
K_1\circ \phi_7\sim 1, \quad K_2\circ \phi_7\sim u_7, \quad K_3\circ \phi_7\sim u_7.
$$
Collecting all the results, we arrive at 
$$
P_{n+1}\circ \phi_7 \sim u_7^{\deg(P_n)-\nu_2(n)-\nu_3(n)}.
$$
This proves \eqref{2d-Ex1-deg-2 1}. Taking $P_0$ to be a defining polynomial of a generic line, i.e., $d(0)=1$ and $\nu_i(0)=0$ for $i=1,\ldots,9$, we obtain \eqref{2d-Ex1 dn}, which can be shown by induction. 
\end{proof}

\begin{re} In system \eqref{2d-Ex1-deg}--\eqref{2d-Ex1-deg-2 3}, one can eliminate all local indices, and obtain the following recurrence relation for the degrees of $P_n$:
$$
    d(n+1)-2d(n)+2d(n-2)-d(n-3)=0.
$$
\end{re}

\subsection{Degree computation based on Picard group}

The map $f$ becomes algebraically stable, and, moreover, an automorphism, under the lift to the surface $X$. Denoting by $H$ the divisor class of (the proper transform of) a generic line, 
the singularity confinement patterns could be rewritten in terms of divisor classes as:
\begin{equation}\label{2d-Ex1-SingConf-div}
    \begin{split}
H-\cE_8-\cE_9 \; \rightarrow \;  \cE_1 \; \rightarrow  \; \cE_4 \; \rightarrow  \; \cE_7 \; \rightarrow  \; H-\cE_2-\cE_3,\\
H-\cE_7-\cE_9 \; \rightarrow  \; \cE_2  \; \rightarrow  \; \cE_5 \; \rightarrow  \; \cE_8  \; \rightarrow  \; H-\cE_1-\cE_3,\\
H-\cE_7-\cE_8 \; \rightarrow  \; \cE_3 \; \rightarrow  \; \cE_6 \; \rightarrow \;  \cE_9 \; \rightarrow \;  H-\cE_1-\cE_2.\\
    \end{split}
\end{equation}
This follows from \eqref{2d-Ex1-SingConf}, if one takes into account that the divisor class of the proper transform of the line $(C_jC_k)$ to $X$ is 
$H-\cE_{j+6}-\cE_{k+6}$, while the divisor class of the proper transform of the line $(A_jA_k)$ to $X$ is  $H-\cE_{j}-\cE_{k}$. From \eqref{2d-Ex1-SingConf-div}, one can compute the action $(f_X)_*$ of the lifted map $f_X$ on the Picard group $\Pic(X)$:
\begin{equation}\label{Ex1 Picard}
(f_X)_*: \;\left( \begin{array}{cccccccccc}
H  \\
\cE_1 \\  \cE_2 \\  \cE_3\\
\cE_4 \\  \cE_5 \\  \cE_6 \\ 
\cE_7 \\  \cE_8 \\  \cE_9
\end{array} \right)
\rightarrow
\left( \begin{array}{cccccccccc}
2H-\cE_1-\cE_2-\cE_3\\
\cE_4 \\ \cE_5 \\ \cE_6 \\
\cE_7 \\ \cE_8 \\ \cE_9 \\
H-\cE_2-\cE_3 \\ H-\cE_1-\cE_3 \\ H-\cE_1-\cE_2\\
\end{array} \right).
\end{equation}
According to Remark \ref{remark automorphism}, the matrix of the linear system \eqref{2d-Ex1-deg}--\eqref{2d-Ex1-deg-2 3} is transposed to the matrix of $(f_X)_*$ (recall that for simple singularities we have $\nu_i(P)=\mu_i(P)$). In other words, the system \eqref{2d-Ex1-deg}--\eqref{2d-Ex1-deg-2 3} can be directly read off  the formulas \eqref{Ex1 Picard} for $(f_X)_*$.

\subsection{Finding an integral of motion}
We can find the integral of motion of the map $f$ by considering the eigensystem of the matrix $(f_X)_*$ acting on the Picard group $\Pic(X)$. To find (a candidate for) an integral of motion, we look for a divisor class $K$ which is an eigenvector corresponding to the eigenvalue 1, and the linear system $|K|$ has dimension 1. The eigenspace corresponding to the eigenvalue 1 is three-dimensional and is spanned by $H-\cE_1-\cE_4-\cE_7$, $H-\cE_2-\cE_5-\cE_8$ and $H-\cE_3-\cE_6-\cE_9$. It remains to find a linear combination of these divisor classes with a non-trivial linear system. One can see that this is satisfied for the divisor class $3H-\cE_1-\cE_2-\cE_3-\cE_4-\cE_5-\cE_6-\cE_7-\cE_8-\cE_9$ (the anticanonical divisor class $-K_X$ of the surface $X$). Its linear system corresponds to the pencil of cubic curves in $\mbP^2$ with nine base points $A_i$, $B_i$, $C_i$. The quotient of any two cubic polynomials of the pencil is easily checked to be an integral of $f$.


\section{Example 2: Invariant plane at infinity for dPI}
\label{sect example 2}

The goal of the example of this section is to illustrate a more complicated situation of a birational map of $\mbP^2$ which can be modified to a surface automorphism, where the modification requires to resolve a number of infinitely near singularities.

\subsection{Definition of the map}

The example comes from \cite{Ercolani2021}, in which the discrete Painlevé I (dPI) equation is transformed into  a three-dimensional autonomous discrete map that has an invariant plane. The restriction of this map to the invariant plane is the following planar birational map:
$$
F:(x,y)\mapsto\Big(\frac{y}{y-1},\frac{x}{(y-1)^2}\Big).
$$

We rewrite this map in homogeneous coordinates:
\begin{equation}\label{dP1 2D f}
f[x:y:z]= [y(y-z):xz:(y-z)^2].
\end{equation}
The inverse map is given by
$$
f^{-1}[x:y:z]=[yz:x(x-z):(x-z)^2].
$$
The singularities of the forward map $f$ and the inverse map are
$$
{\mathcal I}(f)=\{p_3=[0:1:1],\; p_4=[1:0:0]\}
$$ 
and 
$$
{\mathcal I}(f^{-1})=\{p_1=[0:1:0], \; p_2=[1:0:1]\}.
$$
The respective critical sets are:
$$
{\mathcal C}(f)=\{z=0\}\cup\{y-z=0\}
$$ 
and 
$$
{\mathcal C}(f^{-1})=\{z=0\}\cup\{x-z=0\}.
$$
Both components of ${\mathcal C}(f)$ are degree lowering curves:
\begin{equation}\label{ex2-singconf-1}
    f: \quad \{z=0\} \;\rightarrow \; p_2 \;\rightarrow \; p_3  \; \in \; {\mathcal I}(f).
\end{equation}
and
\begin{equation}\label{ex2-singconf-2}
    f: \quad \{y-z=0\} \;\rightarrow \; p_1 \;\rightarrow \; p_2 \;\rightarrow \; p_3 \; \in \; {\mathcal I}(f).
\end{equation}

\subsection{Lifting $f$ to a surface automorphism}

Similarly as in the previous example, we can resolve the singularities of $f$ by successive blowing ups, and lift $f$ to a surface automorphism. However, unlike the previous example, where all the singularities are simple, some singularities of $f$ are deeper (sometimes called \emph{``infinitely near singularities''} in the literature), and more complicated blow-ups are needed to resolve them. So we will work out the details for the blow-ups.  

Our goal is to lift $f$ to an automorphism on some surface $X$ obtained from $\mbP^2$ by a sequence of blow-ups. To achieve this, we need to consider the two orbits \eqref{ex2-singconf-1} and \eqref{ex2-singconf-2}, and to blow up a point whenever a curve is contracted to this point. It turns out we can achieve this by ten blow-ups: 

{\bf Action of $f$ on the critical line $\{y-z=0\}$.} This line is contracted by $f$ to the point $p_1=[0:1:0]$. In order to resolve this, we  blow up this point. We use the following change to coordinates $(u_1,v_1)$ in the affine chart $y=1$:
\begin{equation}\label{dP1 2D E1}
\pi_1:\;\left\{\begin{array}{l} x=u_1, \\ y=1, \\ z=u_1v_1.\end{array}\right.
\end{equation}

The exceptional divisor of $\pi_1$ is given by $E_1=\{u_1=0\}$. In coordinates \eqref{dP1 2D E1} for the image $[\widetilde x:\widetilde y:\widetilde z]$, we compute:
$$
\widetilde u_1=\frac{\widetilde{x}}{\widetilde{y}}=\frac{y(y-z)}{xz}, \quad \widetilde v_1=\frac{\widetilde z}{\widetilde x}=\frac{y-z}{y}.
$$
It follows that the image of (the proper transform of) the line $\{y-z=0\}$ in the exceptional divisor $E_1$ is the point $(0,\widetilde v_1)=(0,0)$, and a further blow-up is necessary. For this second blow-up we use the following change to coordinates $(u_2,v_2)$:
\begin{equation}\label{dP1 2D E2}
\pi_2:\;\left\{\begin{array}{l} u_1=u_2, \\ v_1=u_2v_2\end{array}\right. \quad \Rightarrow\quad 
\phi_2=\pi_1\circ\pi_2:\;\left\{\begin{array}{l} x=u_2, \\ y=1, \\ z=u_2^2v_2. \end{array}\right.
\end{equation}
The exceptional divisor of $\pi_2$ is given by $E_2=\{u_2=0\}$. In coordinates \eqref{dP1 2D E2} for the image $[\widetilde x:\widetilde y:\widetilde z]$, we compute:
$$
\widetilde u_2=\frac{\widetilde{x}}{\widetilde{y}}=\frac{y(y-z)}{xz}, \quad \widetilde v_2=\frac{\widetilde z\widetilde y}{\widetilde x^2}=\frac{xz}{y^2}.
$$
As soon as $y-z=0$, we have $\widetilde u_2=0$ and $\widetilde v_2=x/y$. Thus, (the proper transform of) the line $\{y-z=0\}$ is biregularly mapped to the exceptional divisor $E_2$:
$$
f|_{\{y-z=0\}}:[x:y:y] \mapsto (0,\widetilde v_2)=(0,x/y)\in E_2.
$$

{\bf Action of $f$ on the critical line $\{z=0\}$.}  This line is contracted by $f$ to the point $p_2=[1:0:1]$. In order to resolve this, we  blow up this point. We use the following change to coordinates $(u_3,v_3)$ in the affine chart $x=1$:
\begin{equation}\label{dP1 2D E3}
\phi_3=\pi_3:\;\left\{\begin{array}{l} x=1, \\ y=u_3v_3, \\ z=1-u_3.\end{array}\right.
\end{equation}
The exceptional divisor of $\pi_3$ is given by $E_3=\{u_3=0\}$. In coordinates \eqref{dP1 2D E3} for the image $[\widetilde x:\widetilde y:\widetilde z]$, we compute:
$$
\widetilde u_3=1-\frac{\widetilde{z}}{\widetilde{x}}=1-\frac{y-z}{y}=\frac{z}{y}, \quad \widetilde v_3=\frac{\widetilde y}{\widetilde{x}-\widetilde{z}}=\frac{xz}{z(y-z)}=\frac{x}{y-z}.
$$ 
As soon as $z=0$, we have $\widetilde u_3=0$ and $\widetilde v_3=x/y$. Thus, (the proper transform of) the line $\{z=0\}$ is biregularly mapped to the exceptional divisor $E_3$:
$$
f|_{\{z=0\}}:[x:y:0]\mapsto (0,\widetilde v_3)=(0,x/y)\in E_3.
$$
\smallskip

{\bf Action of $f$ on $E_2$.} Recall that we parametrize a neighborhood of $E_2$ by \eqref{dP1 2D E2}. We compute:
\begin{eqnarray*}
f\left[\begin{array}{c} u_2 \\ 1 \\ u_2^2v_2\end{array}\right] & = & \left[\begin{array}{c} 1-u_2^2v_2\\ u_2^3v_2 \\ (1-u_2^2v_2)^2 \end{array}\right]=\left[\begin{array}{c}  1 \\ \dfrac{u_2^3v_2}{1-u_2^2v_2} \\  1-u_2^2v_2  \end{array}\right]=:\left[\begin{array}{c}  \widetilde{x} \\  \widetilde{y} \\ \widetilde{z} \end{array}\right].
\end{eqnarray*}
Thus, $f$ contracts $E_2=\{u_2=0\}$ to the point $p_2=[1:0:1]$. In order to resolve this, we blow-up this point as in \eqref{dP1 2D E3}. We compute:
$$
\widetilde{u}_3\widetilde{v}_3=\dfrac{u_2^3v_2}{1-u_2^2v_2}, \quad 1-\widetilde{u}_3=1-u_2^2v_2,
$$
so that 
$$
\widetilde{u}_3=u_2^2v_2, \quad \widetilde{v}_3=\dfrac{u_2}{1-u_2^2v_2}.
$$
Thus, $f$ contracts the exceptional divisor $E_2=\{u_2=0\}$ to a single point $(u_3,v_3)=(0,0)\in E_3$. In order to resolve this, we  blow up this point using the following change to coordinates $(u_4,v_4)$:
\begin{equation}\label{dP1 2D E4}
\pi_4:\;\left\{\begin{array}{l} u_3=u_4v_4, \\ v_3=u_4, \end{array}\right. \quad \Rightarrow \quad 
\pi_3\circ\pi_4:\;\left\{\begin{array}{l} x=1, \\ y=u_4^2v_4, \\ z=1-u_4v_4. \end{array}\right.
\end{equation}
The exceptional divisor of $\pi_4$ is given by $E_4=\{u_4=0\}$. In coordinates \eqref{dP1 2D E4} for the expressions on the right-hand side, we compute:
$$
\widetilde{u}_4^2\widetilde{v}_4=\dfrac{u_2^3v_2}{1-u_2^2v_2}, \quad 1-\widetilde{u}_4\widetilde{v}_4=1-u_2^2v_2,
$$
so that
$$
\widetilde{u}_4=\frac{u_2}{1-u_2^2v_2}, \quad \widetilde{v}_4=u_2v_2(1-u_2^2v_2).
$$
Thus, $f$ contracts the exceptional divisor $E_2=\{u_2=0\}$ to a single point $(u_4,v_4)=(0,0)\in E_4$. In order to resolve this, a further  blow-up is necessary, this time using the following change to coordinates $(u_5,v_5)$:
\begin{equation}\label{dP1 2D E5}
\pi_5:\;\left\{\begin{array}{l} u_4=u_5, \\ v_4=u_5v_5,\end{array}\right. \quad \Rightarrow \quad 
\phi_5=\pi_3\circ\pi_4\circ\pi_5:\;
\left\{\begin{array}{l} x=1,\\ y=u_5^3v_5, \\ z=1-u_5^2v_5.\end{array}\right.
\end{equation}
The exceptional divisor of $\pi_5$ is given by $E_5=\{u_5=0\}$. In coordinates \eqref{dP1 2D E5}, we compute: 
$$
\widetilde{u}_5^3\widetilde{v}_5=\dfrac{u_2^3v_2}{1-u_2^2v_2}, \quad 1-\widetilde{u}_5^2\widetilde{v}_5=1-u_2^2v_2,
$$
so that
$$
\widetilde{u}_5=\frac{u_2}{1-u_2^2v_2}, \quad \widetilde{v}_5=v_2(1-u_2^2v_2)^2.
$$
This gives a regular map of a neighborhood of the exceptional divisor $E_2=\{u_2=0\}$ to a neighborhood of the exceptional divisor $E_5=\{u_5=0\}$, 
and in the limit $u_2\to 0$ a biregular map of $E_2$ to $E_5$ given by  
$$
f|_{E_2}: (0,v_2)\mapsto (0,\widetilde{v}_5)=(0,v_2)\in E_5.
$$

{\bf Action of $f$ on $E_3$.}  Recall that we parametrize a neighborhood of $E_3$ according to \eqref{dP1 2D E3}. 
In these coordinates,
\begin{eqnarray*}
f\left[\begin{array}{c} 1 \\ u_3v_3 \\ 1-u_3 \end{array}\right] & = & \left[\begin{array}{c} -u_3v_3(1-u_3-u_3v_3)\\ 1-u_3 \\ (1-u_3-u_3v_3)^2 \end{array}\right]=\left[\begin{array}{c}  -\dfrac{u_3v_3}{1-u_3-u_3v_3} \\  \dfrac{1-u_3}{(1-u_3-u_3v_3)^2} \\1 \end{array}\right]=:\left[\begin{array}{c}  \widetilde{x} \\  \widetilde{y} \\ \widetilde{z} \end{array}\right].
\end{eqnarray*}
We see that $f$ contracts $E_3$ to the point $p_3=[0:1:1]$. In order to resolve this, we  blow up this point. We use the following change to coordinates $(u_6,v_6)$ in the affine chart $z=1$:
\begin{equation}\label{dP1 2D E6}
\phi_6=\pi_6:\;\left\{\begin{array}{l} x=u_6v_6, \\ y=1+u_6, \\ z=1. \end{array}\right.
\end{equation}
The exceptional divisor of $\pi_6$ is given by $E_6=\{u_6=0\}$. We have:
$$
1+\widetilde{u}_6=\dfrac{1-u_3}{(1-u_3-u_3v_3)^2}, \quad \widetilde{u}_6\widetilde{v}_6= -\dfrac{u_3v_3}{1-u_3-u_3v_3}.
$$
This gives a regular map of a neighborhood of $E_3=\{u_3=0\}$ to a neighborhood of $E_6=\{u_6=0\}$, and, in the limit $u_3\to 0$, a regular map of $E_3$ to $E_6$, given by
$$
f|_{E_3}:(0,v_3)\mapsto (0,\widetilde{v}_6)=\big(0,-v_3/(1+2v_3)\big)\in E_6. 
$$

{\bf Action of $f$ on $E_5$.} Recall that we parametrize a neighborhood of $E_5$ by \eqref{dP1 2D E5}. We compute:
\begin{eqnarray*}
f\left[\begin{array}{c} 1 \\ u_5^3v_5 \\ 1-u_5^2v_5 \end{array}\right] & = & \left[\begin{array}{c} u_5^3v_5(1-u_5^2v_5-u_5^3v_5)\\ 1-u_5^2v_5\\ (1-u_5^2v_5-u_5^3v_5)^2 \end{array}\right]=
\left[\begin{array}{c} \dfrac{u_5^3v_5}{1-u_5^2v_5-u_5^3v_5}\\ \dfrac{1-u_5^2v_5}{(1-u_5^2v_5-u_5^3v_5)^2} \\ 1 \end{array}\right].
\end{eqnarray*}
Thus, $f$ contracts $E_5=\{u_5=0\}$ to a single point $p_3=[0:1:1]$. To resolve this, we blow up this point as in \eqref{dP1 2D E6}. Then we find:
$$
\widetilde{u}_6\widetilde{v}_6=\dfrac{u_5^3v_5}{1-u_5^2v_5-u_5^3v_5}, \quad 1+\widetilde{u}_6=\dfrac{1-u_5^2v_5}{(1-u_5^2v_5-u_5^3v_5)^2},
$$
and, upon a simple computation:
$$
\widetilde{u}_6=u_5^2v_5+O(u_5^3), \quad \widetilde{v}_6=u_5+O(u_5^2).
$$
Therefore, $f$ contracts $E_5=\{u_5=0\}$ to a single point $(u_6,v_6)=(0,0)\in E_6$. To resolve this, we blow up this point via the following change to coordinates $(u_7,v_7)$:
\begin{equation}\label{dP1 2D E7}
\pi_7:\;\left\{\begin{array}{l} u_6=u_7v_7, \\ v_6=u_7, \end{array}\right. \quad \Rightarrow \quad 
\pi_6\circ\pi_7:\;\left\{\begin{array}{l} x=u_7^2v_7, \\ y=1+u_7v_7, \\z=1. \end{array}\right.
\end{equation}
The exceptional divisor of $\pi_7$ is given by $E_7=\{u_7=0\}$. We find:
$$
\widetilde{u}_7^2\widetilde{v}_7=\dfrac{u_5^3v_5}{1-u_5^2v_5-u_5^3v_5}, \quad 1+\widetilde{u}_7\widetilde{v}_7=\dfrac{1-u_5^2v_5}{(1-u_5^2v_5-u_5^3v_5)^2}.
$$
A simple computation gives:
$$
\widetilde{u}_7=u_5+O(u_5^2), \quad \widetilde{v}_7=u_5v_5+O(u_5^2).
$$
Therefore, $f$ contracts $E_5=\{u_5=0\}$ to a single point $(u_7,v_7)=(0,0)\in E_7$, and we have to iterate the blow-up procedure, via
\begin{equation}\label{dP1 2D E8}
\pi_8:\;\left\{\begin{array}{l} u_7=u_8, \\ v_7=u_8v_8, \end{array}\right. \quad \Rightarrow \quad 
\phi_8=\pi_6\circ\pi_7\circ\pi_8:\;\left\{\begin{array}{l} x=u_8^3v_8, \\ y=1+u_8^2v_8, \\z=1. \end{array}\right.
\end{equation}
The exceptional divisor of $\pi_8$ is given by $E_8=\{u_8=0\}$. We find:
$$
\widetilde{u}_8^3\widetilde{v}_8=\dfrac{u_5^3v_5}{1-u_5^2v_5-u_5^3v_5}, \quad 1+\widetilde{u}_8^2\widetilde{v}_8=\dfrac{1-u_5^2v_5}{(1-u_5^2v_5-u_5^3v_5)^2}.
$$
And, upon a simple computation:
$$
\widetilde{u}_8=u_5+O(u_5^2), \quad \widetilde{v}_8=v_5+O(u_5).
$$
Thus, $f|_{E_5}$ is biregular:
$$
f|_{E_5}: (0,v_5)\mapsto (0,\widetilde{v}_8)=(0,v_5)\in E_8.
$$

{\bf Action of $f$ on $E_6$.} Recall that we parametrize a neighborhood of $E_8$ by \eqref{dP1 2D E6}. We compute:
\begin{eqnarray*}
f\left[\begin{array}{c} u_6v_6 \\ 1+u_6 \\ 1\end{array}\right] & = & \left[\begin{array}{c} u_6(1+u_6) \\ u_6v_6 \\ u_6^2 \end{array}\right]=\left[\begin{array}{c}  1+u_6 \\ v_6 \\ u_6 \end{array}\right].
\end{eqnarray*}
This gives a regular map of a neighborhood of $E_6=\{u_6=0\}$ to a neighborhood of the line $\{z=0\}$, and, in the limit $u_6\to 0$, a regular map of $E_6$ to this line, given by
$$
f|_{E_6}:(0,v_6)\mapsto[1: v_6:0]\in \{z=0\}.
$$

{\bf Action of $f$ on $E_8$.} Recall that we parametrize a neighborhood of $E_8$ by \eqref{dP1 2D E8}. We compute:
\begin{eqnarray*}
f\left[\begin{array}{c} u_8^3v_8 \\ 1+u_8^2v_8 \\ 1 \end{array}\right] & = & \left[\begin{array}{c} u_8^2v_8(1+u_8^2v_8) \\ u_8^3v_8 \\ u_8^4v_8^2 \end{array}\right]=
\left[\begin{array}{c} 1 \\ u_8/(1+u_8^2v_8) \\ u_8^2v_8/(1+u_8^2v_8) \end{array}\right].
\end{eqnarray*}
Thus, $f$ contracts $E_8=\{u_8=0\}$ to a single point $p_4=[1:0:0]$. To regularize this, we blow-up this point. We use the following change to coordinates $(u_9,v_9)$ in the affine chart $x=1$:
\begin{equation}\label{dP1 2D E9}
\pi_9:\;\left\{\begin{array}{l} x=1, \\ y=u_9, \\ z=u_9v_9. \end{array}\right.
\end{equation}
The exceptional divisor of $\pi_9$ is given by $E_9=\{u_9=0\}$. Then we find:
$$
\widetilde{u}_9=\frac{u_8}{1+u_8^2v_8}, \quad \widetilde{v}_9=\frac{u_8v_8}{1+u_8^2v_8}.
$$
Thus, $f$ contracts $E_8=\{u_8=0\}$ to a single point $(u_9,v_9)=(0,0)\in E_9$, so that we have to iterate the blow-up procedure, via
\begin{equation}\label{dP1 2D E10}
\pi_{10}:\;\left\{\begin{array}{l} u_9=u_{10}, \\ v_9=u_{10}v_{10} \end{array}\right. \quad \Rightarrow\quad 
\phi_{10}=\pi_9\circ\pi_{10}:\;\left\{\begin{array}{l} x=1,\\ y=u_{10}, \\z=u_{10}^2v_{10}.\end{array}\right.
\end{equation}
The exceptional divisor of $\pi_{10}$ is given by $E_{10}=\{u_{10}=0\}$. In coordinates \eqref{dP1 2D E10}, $f$ acts as follows:
$$
\widetilde{u}_{10}=\frac{u_8}{1+u_8^2v_8}, \quad \widetilde{v}_{10}=v_8(1+u_8^2v_8).
$$
Thus, $f$ maps $E_8=\{u_8=0\}$ biregularly to $E_{10}=\{u_{10}=0\}$, 
$$
f|_{E_8}:(0,v_8)\mapsto (0,\widetilde{v}_{10})=(0,v_8).
$$

{\bf Action of $f$ on $E_{10}$.} We parametrize a neighborhood of $E_{10}$ by \eqref{dP1 2D E10}, and compute:
$$
f\left[\begin{array}{c} 1 \\ u_{10} \\ u_{10}^2v_{10}\end{array} \right]  =  
\left[\begin{array}{c} u_{10}^2(1-u_{10}v_{10})\\ u_{10}^2v_{10} \\ u_{10}^2(1-u_{10}v_{10})^2 \end{array}\right]=
\left[\begin{array}{c}  1-u_{10}v_{10} \\ v_{10}\\ (1-u_{10}v_{10})^2 \end{array}\right].
$$
Thus, $f$ maps the exceptional divisor $E_{10}=\{u_{10}=0\}$ biregularly to the line $\{x-z=0\}$, with
$$
f|_{E_{10}}:(0,v_{10})\mapsto[1:v_{10}:1]\in \{x-z=0\}.
$$

The remaining four simple arguments are not relevant for the algorithm for the computation of $\deg(f^n)$, as they treat the action of $f$ on exceptional divisors that are ``lover than top level'', but they are essential for the determination of the induced map on $\Pic(X)$. 

{\bf Action of $f$ on $E_1$.} Recall that we parametrize a neighborhood of $E_1$ by \eqref{dP1 2D E1}. We compute:
\begin{eqnarray*}
f\left[\begin{array}{c} u_1\\ 1 \\ u_1v_1 \end{array}\right] & = & \left[\begin{array}{c} 1-u_1v_1 \\ u_1^2v_1 \\ (1-u_1v_1)^2 \end{array}\right]=\left[\begin{array}{c}  1 \\ u_1^2v_1/(1-u_1v_1) \\  1-u_1v_1  \end{array}\right]=:\left[\begin{array}{c}  \widetilde{x} \\  \widetilde{y} \\ \widetilde{z} \end{array}\right].
\end{eqnarray*}
We identify:
$$
\widetilde{u}_4^2\widetilde{v}_4=\dfrac{u_1^2v_1}{1-u_1v_1}, \quad 1-\widetilde{u}_4\widetilde{v}_4=1-u_1v_1,
$$
so that 
$$
\widetilde{u}_4=\dfrac{u_1}{1-u_1v_1}, \quad \widetilde{v}_4=v_1(1-u_1v_1).
$$
This gives a regular map of a neighborhood of the exceptional divisor $E_1=\{u_1=0\}$ to a neighborhood of the exceptional divisor $E_4=\{u_4=0\}$, 
and in the limit $u_1\to 0$ a regular map of $E_1$ to $E_4$ given by  
$$
f|_{E_1}: (0,v_1)\mapsto (0,\widetilde{v}_4)=(0,v_1)\in E_4.
$$

{\bf Action of $f$ on $E_4$.} Recall that we parametrize a neighborhood of $E_4$ by \eqref{dP1 2D E4}. We compute:
\begin{eqnarray*}
f\left[\begin{array}{c} 1\\ u_4^2v_4 \\ 1- u_4v_4\end{array}\right] & = & \left[\begin{array}{c} u_4^2v_4(1-u_4v_4-u_4^2v_4) \\ 1-u_4v_4\\ (1-u_4v_4-u_4^2v_4)^2 \end{array}\right]= \left[\begin{array}{c} \dfrac{u_4^2v_4}{1-u_4v_4-u_4^2v_4} \\ \dfrac{1-u_4v_4}{(1-u_4v_4-u_4^2v_4)^2}\\ 1 \end{array}\right].
\end{eqnarray*}
We find:
$$
\widetilde{u}_7^2\widetilde{v}_7=\dfrac{u_4^2v_4}{1-u_4v_4-u_4^2v_4}, \quad 1+\widetilde{u}_7\widetilde v_7=\dfrac{1-u_4v_4}{(1-u_4v_4-u_4^2v_4)^2}.
$$
Thus,
$$
\widetilde{u}_7=u_4+O(u_4^2), \quad \widetilde{v}_7=v_4+O(u_4).
$$
Thus, $f$ maps $E_4=\{u_4=0\}$ regularly to $E_7=\{u_7=0\}$, 
$$
f|_{E_4}:(0,v_4)\mapsto (0,\widetilde{v}_7)=(0,v_4)\in E_7.
$$

{\bf Action of $f$ on $E_7$.} Recall that we parametrize a neighborhood of $E_7$ by \eqref{dP1 2D E7}. We compute:
\begin{eqnarray*}
f\left[\begin{array}{c} u_7^2v_7 \\ 1+u_7v_7 \\ 1 \end{array}\right] & = & \left[\begin{array}{c} u_7v_7(1+u_7v_7) \\ u_7^2v_7 \\ u_7^2v_7^2 \end{array}\right]=
\left[\begin{array}{c} 1 \\ u_7/(1+u_7v_7) \\ u_7v_7/(1+u_7v_7) \end{array}\right].
\end{eqnarray*}
We compute:
$$
\widetilde{u}_9=\frac{u_7}{1+u_7v_7}, \quad \widetilde{v}_9=v_7.
$$
Thus, $f$ maps $E_7=\{u_7=0\}$ regularly to $E_9=\{u_9=0\}$, 
$$
f|_{E_7}:(0,v_7)\mapsto (0,\widetilde{v}_9)=(0,v_7)\in E_9.
$$

{\bf Action of $f$ on $E_9$.} We parametrize a neighborhood of $E_9$ by \eqref{dP1 2D E9}, and compute:
\begin{eqnarray*}
f\left[\begin{array}{c} 1 \\ u_9 \\ u_9v_9 \end{array}\right] & = & \left[\begin{array}{c} u_9^2(1-v_9) \\ u_9v_9 \\ u_9^2(1-v_9)^2 \end{array}\right]=
\left[\begin{array}{c} u_9(1-v_9)/v_9 \\ 1 \\ u_9(1-v_9)^2/v_9 \end{array}\right].
\end{eqnarray*}
This is a regular map, if on the right-hand side we use the coordinates \eqref{dP1 2D E1}. In these coordinates, $f$ acts as follows:
$$
\widetilde{u}_1=u_9(1-v_9)/v_9, \quad \widetilde{v}_1=1-v_9.
$$
Thus, $f$ maps $E_9\setminus\{v_9=0\}$ regularly to $E_1\setminus\{v_1=1\}$, 
$$
f|_{E_9}:(0,v_9)\mapsto (0,\widetilde{v}_1)=(0,1-v_9).
$$

Exceptional divisors on the blow-up surface $X$ are shown in Fig. \ref{fig:Divisors}. 

\begin{figure}[h]
\centering
\includegraphics[width=\textwidth]{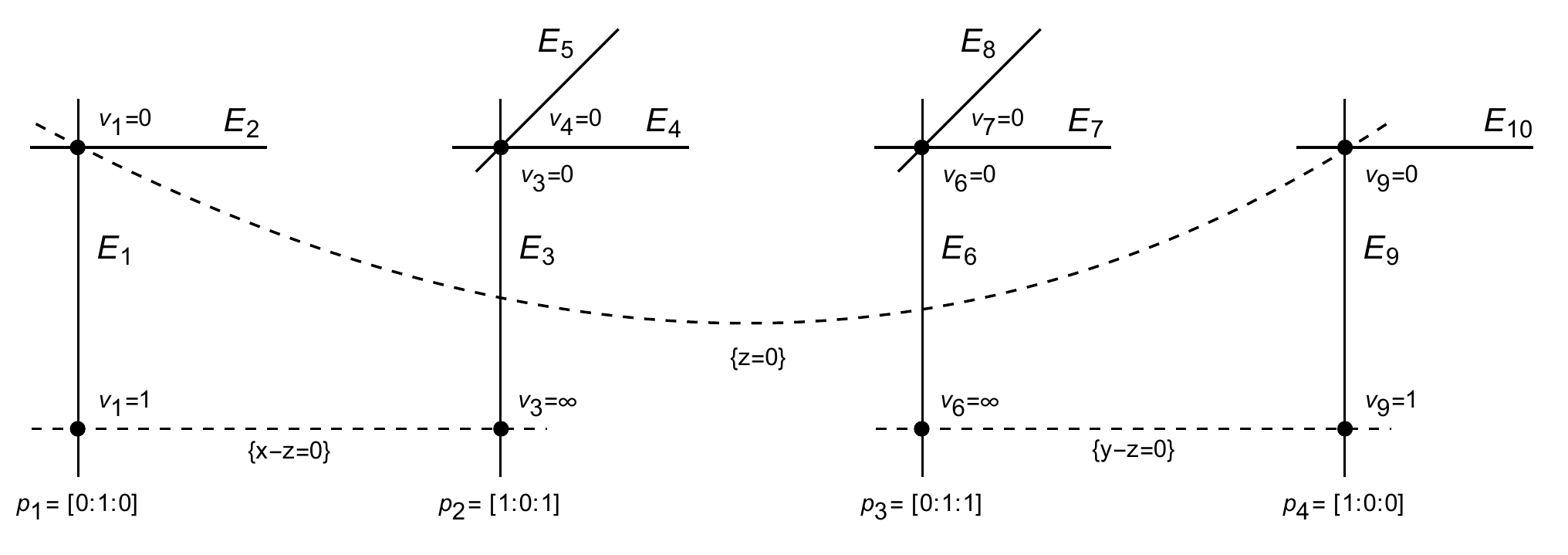}
\caption[Blowing ups]{Divisors appearing in the blowing ups at the four points $p_1,p_2,p_3,p_4$ (from left to right). The total transforms of the blown-up points are: $\mathcal E_1=E_1+E_2$, $\mathcal E_2=E_2$, $\mathcal E_3=E_3+E_4+E_5$, $\mathcal E_4=E_4+E_5$, $\mathcal E_5=E_5$, $\mathcal E_6=E_6+E_7+E_8$, $\mathcal E_7=E_7+E_8$, $\mathcal E_8=E_8$, $\mathcal E_9=E_9+E_{10}$, $\mathcal E_{10}=E_{10}$.}
\label{fig:Divisors}
\end{figure}

\subsection{Degree computation with local indices}

According to our general algorithm, we start with defining local indices with respect to the blow-ups $\phi_2$ and $\phi_3$, given in \eqref{dP1 2D E2} and \eqref{dP1 2D E3}, which regularize the map $f$ contracting the critical lines $\{y-z=0\}$ and $\{z=0\}$ to the points $p_1$ and $p_2$, respectively. The concrete realization of Definition \ref{def index} reads as follows.

\begin{de} \label{definition indices nu}
Let $P(x,y,z)$ be a homogeneous polynomial. The indices of $P$ with respect to the blow-up maps $\phi_2$ and $\phi_3$, denoted by $\nu_2(P)$ and $\nu_3(P)$, are defined by the relations
\begin{equation}\label{def nu2}
P\circ\phi_2=P(u_2,1,u_2^2v_2)=u_2^{\nu_2(P)}\widetilde P(u_2,v_2), \quad \widetilde P(0,v_2)\not\equiv 0,
\end{equation}
respectively 
\begin{equation}\label{def nu3}
P\circ\phi_3=P(1,u_3v_3,1-u_3)=u_3^{\nu_3(P)}\widetilde P(u_3,v_3), \quad \widetilde P(0,v_3)\not\equiv 0.
\end{equation}
The blow-up maps $\phi_2$ and $\phi_3$ are given in \eqref{dP1 2D E2}, resp. \eqref{dP1 2D E3}.
\end{de}
Similarly to $\nu_2$, $\nu_3$, we define the indices $\nu_5$, $\nu_6$, $\nu_8$, $\nu_{10}$, based on formulas \eqref{dP1 2D E5}, \eqref{dP1 2D E6}, \eqref{dP1 2D E8}, \eqref{dP1 2D E10} for the blow-ups maps $\phi_5$, $\phi_6$, $\phi_8$, $\phi_{10}$.  

A concrete realization of Theorem \ref{theorem split off} in the present case reads as follows.

\begin{pro}\label{lemma prefactor z y-z}
For any homogeneous polynomial $P$, the pull-back $f^*P=P\circ\tilde f$ is divisible by $(y-z)^{\nu_2(P)}$ (and by no higher degree of $y-z$) and by $z^{\nu_3(P)}$ (and by no higher degree of $z$).
\end{pro}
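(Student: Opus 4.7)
The plan is to apply Theorem \ref{theorem split off} directly, once for each irreducible component of $\mathcal C(f)$. Recall that $\mathcal C(f)$ consists of exactly two components, $\{y-z=0\}$ and $\{z=0\}$, both of which are contracted by $f$: the first to the point $p_1=[0:1:0]$, the second to $p_2=[1:0:1]$. So the statement will follow as soon as we exhibit, for each of these components, a tower of blow-ups over the corresponding image point whose top-level exceptional divisor is a birational image of the contracted component under the lift of $f$.

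First I would treat $\{z=0\}$, which is the easier case. The singularity at $p_2$ is simple: the single sigma-process $\phi_3$ of \eqref{dP1 2D E3} suffices, and the verification performed in the analysis of the action of $f$ on $\{z=0\}$ (leading to $f|_{\{z=0\}}:[x:y:0]\mapsto(0,x/y)\in E_3$) shows that the lift of $f$ maps the proper transform of $\{z=0\}$ biregularly to $E_3=\{u_3=0\}$. Theorem \ref{theorem split off} applied with $K=z$ and $\phi=\phi_3$ then yields that $f^*P$ is divisible by $z^{\nu_3(P)}$ and by no higher power.

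Next I would handle $\{y-z=0\}$, which is a deeper singularity requiring the two successive blow-ups that compose into $\phi_2=\pi_1\circ\pi_2$ of \eqref{dP1 2D E2}. The point $p_1$ must first be blown up via $\pi_1$, and the lift of $f$ still contracts the proper transform of $\{y-z=0\}$ to the point $(0,0)\in E_1$; the second blow-up $\pi_2$ is then required, and the explicit computation in the analysis of the action of $f$ on $\{y-z=0\}$ (giving $f|_{\{y-z=0\}}:[x:y:y]\mapsto(0,x/y)\in E_2$) shows that the lift of $f$ now maps the proper transform of $\{y-z=0\}$ biregularly to $E_2=\{u_2=0\}$. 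A second application of Theorem \ref{theorem split off}, this time with $K=y-z$ and $\phi=\phi_2$, yields that $f^*P$ is divisible by $(y-z)^{\nu_2(P)}$ and by no higher power.

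There is no real obstacle here: all the geometric work has already been carried out in the explicit resolution of $p_1$ and $p_2$, and the proposition is simply the translation of Theorem \ref{theorem split off} into the notation of Definition \ref{definition indices nu}. The only thing worth pointing out explicitly in the write-up is that the two divisibility statements are independent, since the polynomials $y-z$ and $z$ are coprime; hence the two factors $(y-z)^{\nu_2(P)}$ and $z^{\nu_3(P)}$ can be split off from $f^*P$ simultaneously, each with the maximal exponent guaranteed by the theorem.
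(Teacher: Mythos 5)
Your proof is correct, but it takes a different route from the one written in the paper. You deduce the proposition as a corollary of Theorem \ref{theorem split off}: you check its hypotheses for each of the two contracted lines (the simple singularity at $p_2=[1:0:1]$ resolved by the single sigma-process $\phi_3$ of \eqref{dP1 2D E3}, and the deeper singularity at $p_1=[0:1:0]$ resolved by the composite $\phi_2=\pi_1\circ\pi_2$ of \eqref{dP1 2D E2}), using the fact that the regularization subsection has already verified that the lift of $f$ maps the proper transforms of $\{z=0\}$ and $\{y-z=0\}$ birationally onto $E_3$ and $E_2$ respectively. The paper instead gives a self-contained computation: it expands $P$ in each blow-up chart according to Definition \ref{definition indices nu}, rewrites the result in homogeneous coordinates, substitutes the components of $\tilde f$, and reads off the exact power of $y-z$ (resp.\ $z$) dividing $f^*P$ --- in effect re-running the proof of Theorem \ref{theorem split off} explicitly in coordinates. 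This is deliberate: the paper announces after Theorem \ref{theorem split off} that this proposition is meant to \emph{illustrate} the theorem by concrete changes of variables, so the explicit computation is the point, not an oversight. Your version buys brevity and makes the logical dependence on the general theorem transparent; the paper's version buys an explicit worked verification (and an independent check of the chart computations). Your closing remark that the two divisibility statements combine because $y-z$ and $z$ are coprime is correct and worth keeping; the only thing I would add is a one-line verification that $f^*u_2=y(y-z)/(xz)$ and $f^*u_3=z/y$ have the form $K\cdot\widetilde u_1$ with $\widetilde u_1$ not vanishing identically on $\{K=0\}$, since that is the normalization underlying the proof of Theorem \ref{theorem split off}.
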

\begin{proof}
\begin{enumerate}[1)]
\item In the chart \eqref{dP1 2D E2}, we compute:
$$
P(x,1,z)=u_2^{\nu_2}\widetilde P(u_2,v_2)=x^{\nu_2}\widetilde P\Big(x,\frac{z}{x^2}\Big).
$$
In the homogeneous coordinates,
$$
P(x,y,z)=y^d P\Big(\frac{x}{y},1,\frac{z}{y}\Big)=x^{\nu_2}y^{d-\nu_2}\widetilde P\Big(\frac{x}{y},\frac{yz}{x^2}\Big).
$$
Setting $\tilde f(x,y,z)=(X,Y,Z)$, we compute, according to \eqref{dP1 2D f}:
$$
f^*P(x,y,z)=X^{\nu_2}Y^{d-\nu_2}\widetilde P\Big(\frac{X}{Y},\frac{YZ}{X^2}\Big)=\big(y(y-z)\big)^{\nu_2}(xz)^{d-\nu_2}\widetilde P\Big(\frac{y(y-z)}{xz},\frac{xz}{y^2}\Big).
$$
This polynomial is manifestly divisible by $(y-z)^{\nu_2}$ (and by no higher degree of $y-z$). 
\item Likewise, we compute in the chart \eqref{dP1 2D E3}:
$$
P(1,y,z)=u_3^{\nu_3}\widetilde P(u_3,v_3)=(1-z)^{\nu_3}\widetilde P\Big(1-z,\frac{y}{1-z}\Big).
$$
In the homogeneous coordinates,
$$
P(x,y,z)=x^d P\Big(1,\frac{y}{x},\frac{z}{x}\Big)=(x-z)^{\nu_3}x^{d-\nu_3}\widetilde P\Big(\frac{x-z}{x},\frac{y}{x-z}\Big).
$$
Therefore, upon a straightforward computation according to \eqref{dP1 2D f}:
$$
\begin{aligned}
f^*P(x,y,z)&=(X-Z)^{\nu_3}X^{d-\nu_3}\widetilde P\Big(\frac{X-Z}{X},\frac{Y}{X-Z}\Big)\\&=\big(z(y-z)\big)^{\nu_3}\big(y(y-z)\big)^{d-\nu_3}\widetilde P\Big(\frac{z}{y},\frac{x}{y-z}\Big).
\end{aligned}
$$
This polynomial is manifestly divisible by $z^{\nu_3}$ (and by no higher degree of $z$). 
\end{enumerate}
\end{proof}

Next, we will study the behavior of a given polynomial $P=P_0$ under iterates of the pull-back $f^*$. According to Proposition \ref{lemma prefactor z y-z}, we define recursively:
\begin{equation}\label{dP1 2D iterate P}
P_{n+1}=(y-z)^{-\nu_2(n)}z^{-\nu_3(n)}f^*P_n,
\end{equation}
where $\nu_2(n)$, $\nu_3(n)$ stand for $\nu_2(P_n)$, $\nu_3(P_n)$ (and similarly for other indices). More explicitly,
\begin{equation}\label{dP1 2D L n+1}
P_{n+1}(x,y,z)=(y-z)^{-\nu_2(n)}z^{-\nu_3(n)}P_n\big(y(y-z),xz,(y-z)^2\big).
\end{equation}
As usual, we also set $d(n)=\deg(P_n)$. 
\begin{theo}\label{theorem Ex2 recurrent nu} 
The degrees $d(n)$ and the indices $\nu_i(n)$ satisfy the following recurrent relations:
\begin{eqnarray}
d(n+1) & = & 2d(n)-\nu_2(n)-\nu_3(n), \label{dP1 2D d}\\
\nu_2(n+1) & = & \nu_5(n)-2\nu_3(n),  \label{dP1 2D nu2}\\
\nu_3(n+1) & = & \nu_6(n),  \label{dP1 2D nu3}\\
\nu_5(n+1) & = & \nu_8(n),  \label{dP1 2D nu5} \\    
\nu_6(n+1) & = & d(n)-\nu_2(n), \label{dP1 2D nu6} \\
\nu_8(n+1) & = & 2d(n)-2\nu_2(n)+\nu_{10}(n),  \label{dP1 2D nu8}   \\           
\nu_{10}(n+1) & = &  2d(n)-\nu_2(n)-2\nu_3(n).   \label{dP1 2D nu10}
\end{eqnarray} 
\end{theo}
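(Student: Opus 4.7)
The plan is to apply the general propagation rules of Section \ref{sect propagate} to the two degree-lowering orbits of $f$, reading off the required local data from the blow-up charts just constructed.

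Equation \eqref{dP1 2D d} is immediate from the definition \eqref{dP1 2D iterate P}: taking degrees of both sides gives $d(n+1) = 2 d(n) - \nu_2(n) - \nu_3(n)$, since $f^{*}P_n$ is a polynomial of degree $2 d(n)$ and we divide by $(y-z)^{\nu_2(n)} z^{\nu_3(n)}$. The remaining six equations all have the same structure: each expresses a top-level local index $\nu_i(n+1)=\nu_{\phi_i}(P_{n+1})$ along one of the two orbits in terms of data at the next orbit point. The first orbit is $\{y-z=0\}\to p_1\to p_2\to p_3\to p_4$ with associated top-level blow-ups $\phi_2,\phi_5,\phi_8,\phi_{10}$; the second is $\{z=0\}\to p_2\to p_3$ with top-level blow-ups $\phi_3,\phi_6$.

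For each of the six indices the first step is to decide which of Cases 1, 2, 3a, 3b of Section \ref{sect propagate} applies, by classifying the point $f^{m-1}(p)$ at which the propagation takes place. Since $p_2=[1{:}0{:}1]\notin\mathcal C(f)$, Case 1 applies to the propagations $\phi_3\rightsquigarrow\phi_6$ and $\phi_5\rightsquigarrow\phi_8$, which directly yields \eqref{dP1 2D nu3} and \eqref{dP1 2D nu5} with no correction terms. The point $p_1=[0{:}1{:}0]$ lies on $\{z=0\}\subset\mathcal C(f)\setminus\mathcal I(f)$, so the propagation $\phi_2\rightsquigarrow\phi_5$ falls under Case 2, producing the correction involving $\nu_3(n)$ in \eqref{dP1 2D nu2}. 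The points $p_3,p_4\in\mathcal I(f)$ belong to Case 3, with the subcase determined by the image of the corresponding top-level exceptional divisor: the analyses in the previous subsection give $f(E_6)=\{z=0\}\subset\mathcal C(f^{-1})$ and $f(E_{10})=\{x-z=0\}\subset\mathcal C(f^{-1})$, both Case 3a, yielding \eqref{dP1 2D nu6} and \eqref{dP1 2D nu10}; whereas $f(E_8)=E_{10}$, i.e.\ the exceptional divisor is carried into another divisor of the blow-up tower over $p_4$, which is Case 3b, yielding \eqref{dP1 2D nu8} with its characteristic extra term $\nu_{10}(n)$.

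It then remains, for each case, to substitute the numerical data: the local indices $\nu_{\phi_i}(y-z)$ and $\nu_{\phi_i}(z)$ of the two minimal critical polynomials, together with, in Cases 3a and 3b, the integer $s$ equal to the maximal power of the first blow-up coordinate common to all components of $\tilde f\circ\phi_i$. All of these are read directly off the formulas \eqref{dP1 2D E2}–\eqref{dP1 2D E10} and the images $f\circ\phi_i$ already computed. For instance, at $\phi_2$ one has $y-z=1-u_2^2v_2$ and $z=u_2^2v_2$, hence $\nu_{\phi_2}(y-z)=0$ and $\nu_{\phi_2}(z)=2$, which explains the coefficient $-2$ of $\nu_3(n)$ in \eqref{dP1 2D nu2}; at $\phi_8$ the components of $\tilde f\circ\phi_8$ share exactly the factor $u_8^2$, while $y-z=u_8^2v_8$ and $z=1$, so $s=2$, $\nu_{\phi_8}(y-z)=2$, $\nu_{\phi_8}(z)=0$, producing \eqref{dP1 2D nu8}. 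The remaining equations are verified analogously.

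The main obstacle is therefore conceptual rather than computational: correctly recognising that $p_1$ lies on the critical component $\{z=0\}$ (which forces Case 2 instead of Case 1 for $\phi_2\rightsquigarrow\phi_5$), and distinguishing the two Case 3 subcases, i.e.\ which top-level exceptional divisors land on a critical component of $f^{-1}$ (as for $E_6$ and $E_{10}$) and which are carried inside a blow-up tower (as for $E_8$). Once the correct case is assigned at each step, all six recurrences follow by mechanical substitution of the numerical data listed above.
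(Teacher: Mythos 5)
Your proposal is correct and follows essentially the same route as the paper: the paper's proof verifies each of the six index recurrences by direct computation in the charts \eqref{dP1 2D E2}--\eqref{dP1 2D E10}, which is precisely the concrete instantiation of the Case 1/2/3a/3b propagation formulas you invoke (the paper itself notes this correspondence immediately after its proof). Your case assignments ($p_2$ regular, $p_1$ on $\{z=0\}$ giving Case 2, $E_6$ and $E_{10}$ landing on $\mathcal C(f^{-1})$ giving Case 3a, $E_8\to E_{10}$ giving Case 3b) and all the numerical data ($s$, $\nu_{\phi_i}(y-z)$, $\nu_{\phi_i}(z)$) match the paper's computations.
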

\begin{proof} Formula \eqref{dP1 2D d} follows directly from \eqref{dP1 2D iterate P}. For all other formulas, we have to study the behavior of the polynomial \eqref{dP1 2D L n+1}. 

$\bullet$ Eqs. \eqref{dP1 2D nu3}, \eqref{dP1 2D nu5} hold true, since $f_X$ acts biregularly in the neighborhood of the exceptional divisors $E_3$ and $E_5$ over the point $p_2=[1:0:1]$, and the factors $z$ and $y-z$ do not vanish at this point. 

$\bullet$ Eq. \eqref{dP1 2D nu2}: consider behavior of $P_{n+1}$ under the blow-up \eqref{dP1 2D E2} at the point $[0:1:0]$. This point lies on the critical line $\{z=0\}$ but not on the critical line $\{y-z=0\}$. In the blow-up coordinate patch \eqref{dP1 2D E2} we have $z\sim u_2^2$ and $y-z\sim 1$, therefore
\begin{eqnarray*}
P_{n+1} & \sim & u_2^{-2\nu_3(n)}f^*P_n \; = \; u_2^{-2\nu_3(n)}P_n\big(1-u_2^2v_2, u_2^3v_2,(1-u_2^2v_2)^2\big)\\
& \sim & u_2^{-2\nu_3(n)}P_n(1, u_2^3v_2,1-u_2^2v_2)\; \sim \; u_2^{-2\nu_3(n)+\nu_5(n)}.
\end{eqnarray*}
The last step follows from the comparison of the argument of $P_n$ with the blow-up \eqref{dP1 2D E5}.

$\bullet$ Eq. \eqref{dP1 2D nu6}: consider behavior of $P_{n+1}$ under the blow-up \eqref{dP1 2D E6} at the point $[0:1:1]$. This point lies on the critical line $\{y-z=0\}$ but not on the critical line $\{z=0\}$. In the blow-up coordinate patch \eqref{dP1 2D E6} we have $y-z=u_6$ and $z=1$, therefore
\begin{eqnarray*}
P_{n+1}& = & u_6^{-\nu_2(n)}f^*P_n \; = \; u_6^{-\nu_4(n)}P_n(u_6(1+u_6), u_6v_6,u_6^2)\\
& = & u_6^{-\nu_2(n)+d(n)}P_n(1+u_6, v_6,u_6)\; \sim \; u_6^{d(n)-\nu_2(n)}.
\end{eqnarray*}
Indeed, $P_n(1,v_6,0)\not\equiv 0$ since, by construction, $P_n(x,y,z)$ is not divisible by $z$.

$\bullet$ Eq. \eqref{dP1 2D nu8}: consider behavior of $P_{n+1}$ under the blow-up \eqref{dP1 2D E8} at the point $[0:1:1]$. This point lies on the critical line $\{y-z=0\}$ but not on the critical line $\{z=0\}$. In the blow-up coordinate patch \eqref{dP1 2D E8} we have $y-z=u_8^2v_8$ and $z=1$, therefore
\begin{eqnarray*}
P_{n+1}& \sim & u_8^{-2\nu_2(n)}f^*P_n \; = \; u_8^{-2\nu_2(n)}P_n(u_8^2v_8(1+u_8^2v_8), u_8^3v_8,u_8^4v_8^2)\\
& \sim & u_8^{-2\nu_2(n)}u_8^{2d(n)}P_n(1, u_8,u_8^2v_8)\; \sim \; u_8^{2d(n)-2\nu_2(n)}u_8^{\nu_{10}(n)}.
\end{eqnarray*}
The last step follows from the comparison of the argument of $P_n$ with the blow-up \eqref{dP1 2D E10}.

$\bullet$ Eq. \eqref{dP1 2D nu10}: consider behavior of $P_{n+1}$ under the blow-up \eqref{dP1 2D E10} at the point $[1:0:0]$. This point lies on the both critical lines $\{z=0\}$ and $\{y-z=0\}$. In the blow-up coordinate patch \eqref{dP1 2D E10} we have $y-z\sim u_{10}$ and $z\sim u_{10}^2$. Therefore,
\begin{eqnarray*}
P_{n+1} & \sim & u_{10}^{-\nu_2(n)-2\nu_3(n)} f^*P_n \\
& = & u_{10}^{-\nu_2(n)-2\nu_3(n)}P_n\big(u_{10}^2(1-u_{10}v_{10}), u_{10}^2v_{10},u_{10}^2(1-u_{10}v_{10})^2\big)\\
& \sim & u_{10}^{-\nu_2(n)-2\nu_3(n)}u_{10}^{2d(n)}P_n(1, v_{10},1-u_{10}v_{10}) \; \sim \; u_{10}^{2d(n)-\nu_2(n)-2\nu_3(n)}.
\end{eqnarray*}
This finishes the proof.
\end{proof}

Theorem \ref{theorem Ex2 recurrent nu}  is a concrete realization of results of Section \ref{sect propagate} concerning propagation of local indices. More precisely,  formulas \eqref{dP1 2D nu3}, \eqref{dP1 2D nu5} exemplify \eqref{propagation regular}, formula \eqref{dP1 2D nu2} illustrates the case \eqref{propagation critical}, formulas \eqref{dP1 2D nu6}, \eqref{dP1 2D nu10} illustrate the case \eqref{propagation inverse critical}, while formula \eqref{dP1 2D nu6} is an instance of \eqref{propagation singular}. 

\begin{cor}
Degrees $d(n)$ satisfy the following recurrent relation:
\begin{equation}\label{dP1 2D d recur}
d(n+1)-2d(n)+d(n-2)+d(n-3)-2d(n-5)+d(n-6)=0.
\end{equation}
\end{cor}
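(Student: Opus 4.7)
The plan is to view the closed linear system \eqref{dP1 2D d}--\eqref{dP1 2D nu10} as a single vector recurrence and apply the Cayley--Hamilton theorem. Set
\[
v(n)=\bigl(d(n),\nu_2(n),\nu_3(n),\nu_5(n),\nu_6(n),\nu_8(n),\nu_{10}(n)\bigr)^{T},
\]
so that the seven recurrences of Theorem \ref{theorem Ex2 recurrent nu} read $v(n+1)=Mv(n)$ for the explicit $7\times 7$ integer matrix $M$ whose rows are obtained by reading off the right-hand sides. By Cayley--Hamilton, $p_{M}(M)=0$, where $p_{M}(\lambda)=\det(\lambda I-M)$; consequently $p_{M}(E)$ (with $E$ the shift operator $(Ea)(n)=a(n+1)$) annihilates every coordinate of $v(n)$, and in particular $p_{M}(E)\,d=0$.

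It therefore suffices to compute $p_{M}(\lambda)$ and check that it equals $\lambda^{7}-2\lambda^{6}+\lambda^{4}+\lambda^{3}-2\lambda+1$, whose translation back to difference-equation language is precisely \eqref{dP1 2D d recur}. The computation of $\det(\lambda I-M)$ is greatly simplified by sparsity: the columns of $M$ corresponding to the four ``pure shift'' variables $\nu_{5},\nu_{6},\nu_{8},\nu_{10}$ contain only two non-zero entries each (the $\lambda$ on the diagonal and a single $-1$ coming from the trivial propagations \eqref{dP1 2D nu3}--\eqref{dP1 2D nu5} and from the $\nu_{10}$-term in \eqref{dP1 2D nu8}). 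Four successive cofactor expansions along those columns reduce the determinant to a small $3\times 3$ determinant whose entries are low-degree polynomials in $\lambda$ coming from the non-trivial rows (those for $d$, $\nu_{2}$, $\nu_{6}$, $\nu_{8}$, $\nu_{10}$), after which the claimed $p_{M}(\lambda)$ emerges directly.

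The only real obstacle is bookkeeping during the cofactor expansion: signs must be tracked carefully, and the coupling of $\nu_{10}$ back to $d$ via the chain $\nu_{10}\to\nu_{8}\to\nu_{5}\to\nu_{2}$ must not be lost. As a cross-check, iterating the system from the generic initial data $d(0)=1$, $\nu_{i}(0)=0$ produces $d(0),\ldots,d(7)$ explicitly, and one can verify both \eqref{dP1 2D d recur} directly and that no polynomial of lower degree in $E$ annihilates $d$. An equivalent hands-on derivation bypasses the determinant by operator calculus: \eqref{dP1 2D nu3}, \eqref{dP1 2D nu5}, \eqref{dP1 2D nu6} give $\nu_{2}=d-E^{2}\nu_{3}$ and $\nu_{5}=Ed+(2-E^{3})\nu_{3}$, while iterating \eqref{dP1 2D nu8} and \eqref{dP1 2D nu10} expresses $\nu_{10}$ as an explicit polynomial in $E$ applied to $d$ and $\nu_{3}$; substituting into \eqref{dP1 2D d} and \eqref{dP1 2D nu10} gives two $\mathbb{Z}[E]$-linear relations between $d$ and $\nu_{3}$, whose resultant in $\nu_{3}$ (after cancelling the expected spurious factor of $E-1$ that vanishes on our generic initial data) is precisely $E^{7}-2E^{6}+E^{4}+E^{3}-2E+1$, yielding \eqref{dP1 2D d recur}.
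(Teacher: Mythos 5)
Your proposal is correct, and your primary route is genuinely different from the paper's. The paper proves the corollary by a short ad hoc elimination: it combines \eqref{dP1 2D d} with \eqref{dP1 2D nu6} to get $\nu_3(n+2)-\nu_3(n)=d(n+1)-d(n)$, combines \eqref{dP1 2D d} with \eqref{dP1 2D nu10} to get $\nu_2(n+4)=d(n+1)-\nu_3(n)$, and then eliminates the indices by hand. Your Cayley--Hamilton argument is systematic and complete: with $v=(d,\nu_2,\nu_3,\nu_5,\nu_6,\nu_8,\nu_{10})^T$ and $v(n+1)=Mv(n)$ one has $\sum_k c_k v(n+k)=M^n p_M(M)v(0)=0$, and a direct computation (which I verified) gives
\begin{equation*}
p_M(\lambda)=\lambda^7-2\lambda^6+\lambda^4+\lambda^3-2\lambda+1=(\lambda-1)^4(\lambda+1)(\lambda^2+\lambda+1),
\end{equation*}
exactly the symbol of \eqref{dP1 2D d recur}. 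Your second, ``operator calculus'' derivation is essentially the paper's proof in disguise: the two $\mathbb{Z}[E]$-linear relations are $(E^2-1)\nu_3=(E-1)d$ and $(E^4-1)d=(E^6+E^2-2)\nu_3$, and their $2\times 2$ resultant in $\nu_3$ equals the degree-7 polynomial on the nose, so no spurious factor of $E-1$ needs to be cancelled --- that parenthetical should be dropped. What the paper's elimination buys is brevity; what Cayley--Hamilton buys is that no cleverness is required and the recurrence is manifestly valid for arbitrary initial data $\big(d(0),\nu_i(0)\big)$, not only for a generic linear $P_0$.

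A few small corrections to side remarks, none affecting the proof. First, the minimality claim is false: the degree-5 operator $(E-1)^3(E^2+E+1)=E^5-2E^4+E^3-E^2+2E-1$ already annihilates $d$ (check against the table: $18-2\cdot 12+7-4+2\cdot 2-1=0$); it follows from the two relations above because $E^6+E^2-2=(E^2-1)(E^4+E^2+2)$. The corollary's operator is this one times $(E-1)(E+1)$, so the statement is of course still true, but the characteristic polynomial is not the minimal annihilator of $d$. Second, each of the four sparse columns of $\lambda I-M$ has \emph{two} nonzero entries, so four cofactor expansions produce a signed sum of several $3\times 3$ minors rather than a single one; the bookkeeping is heavier than advertised, though the outcome is as you state. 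Third, the formula $\nu_5=Ed+(2-E^3)\nu_3$ comes from \eqref{dP1 2D nu2} together with $\nu_2=d-E^2\nu_3$, not from \eqref{dP1 2D nu5}.
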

\begin{proof}
Comparing \eqref{dP1 2D d} with \eqref{dP1 2D nu6}, we find:
$$
d(n+1)=d(n)+\nu_6(n+1)-\nu_3(n)\quad \Leftrightarrow \quad \nu_3(n+2)-\nu_3(n)=d(n+1)-d(n).
$$
Analogously, comparing \eqref{dP1 2D d} with \eqref{dP1 2D nu10}, we find:
$$
\nu_{10}(n+1)=d(n+1)-\nu_3(n) \quad \Leftrightarrow \quad \nu_2(n+4)=d(n+1)-\nu_3(n).
$$
From the last two equations we derive:
$$
\nu_2(n+4)-\nu_2(n+2)=d(n+1)-\nu_3(n)-d(n-1)+\nu_3(n-2)=d(n+1)-2d(n-1)+d(n-2).
$$
Now:
\begin{align*}
& d(n+1)-d(n-1) \\
& \quad =  2\big(d(n)-d(n-2)\big)-\big(\nu_3(n)-\nu_3(n-2)\big)-\big(\nu_2(n)-\nu_2(n-2)\big)\\
& \quad = 2\big(d(n)-d(n-2)\big)-\big(d(n-1)-d(n-2)\big)-\big(d(n-3)-2d(n-5)+d(n-6)\big),
\end{align*}
which is equivalent to \eqref{dP1 2D d recur}.
\end{proof}

If $P_0$ is a generic linear polynomial, for which $d(0)=1$ and all $\nu_i(0)=0$, then recurrent relations \eqref{dP1 2D d}--\eqref{dP1 2D nu10} easily produce the following table of degrees and indices:
\smallskip

\begin{center}
\begin{tabular}[h]{c|c|cccccc}
\hline
$n$ & $d(n)$ & $\nu_2(n)$ & $\nu_3(n)$ & $\nu_6(n)$  & $\nu_5(n)$  & $\nu_8(n)$ & $\nu_{10}(n)$ \\
\hline
0 & 1 & 0 & 0 & 0  & 0 & 0 & 0\\
1 & 2 & 0 & 0 & 1 & 0 & 2 & 2 \\
2 & 4 & 0 & 1 & 2 & 2 & 6 & 4\\
3 & 7 & 0 & 2 & 4 & 6 & 12 & 6 \\
4 & 12 & 2 & 4 & 7 & 12 & 20 & 10 \\
5 & 18 & 4 & 7 & 10 & 20 &  30 & 14 \\
6 & 25 & 6 & 10 & 14 & 30 & 42 & 18 \\
7 & 34 & 10 & 14 & 19 & 42 & 56 & 24\\
8 & 44 & 14 & 19 & 24 & 56 & 72 & 30\\
9 & 55 & 18 & 24 & 30 & 72 & 90 & 36\\
10 & 68 & 24 & 30 & 37 & 90 & 110 & 44\\
11& 82 & 30 & 37 & 44 & 110 & 132 & 52\\
12 & 97 & 36 & 44 & 52 & 132 & 156 & 60\\
13 & 114 & 44 & 52 & 61 & 156 & 182 & 70
\end{tabular}
\end{center}
\smallskip

By induction one proves in this case:
\begin{equation}\label{dP1 2D d expl}
d(n)=\frac{1}{9} \left(6 n^2-2\cos\frac{2\pi  n}{3}+11\right).
\end{equation}  
This is the formula for degrees $\deg(f^n)$.

\subsection{Degree computation based on the Picard group}

On the surface $X$ obtained from $\mbP^2$ by ten successive blow-ups, one can easily check that $f$ is lifted to an automorphism $f_X$. 
The action of $(f_X)_*$ on the divisors obtained by regularizing the singular orbits \eqref{ex2-singconf-1}, \eqref{ex2-singconf-2},  can be summarized as follows:
\begin{align}
  & \overline{\{y-z=0\}} \;\; \rightarrow \;\; \cE_2 \;\; \rightarrow \;\; \cE_5 \;\; \rightarrow \;\; \cE_8 \;\; \rightarrow \;\; \cE_{10} \;\; \rightarrow \;\; \overline{\{x-z=0\}},   \label{2d-Ex2-2}\\
  & \overline{\{z=0\}} \;\; \rightarrow \;\; \cE_3-\cE_4-\cE_5 \;\; \rightarrow \;\; \cE_6-\cE_7-\cE_8 \;\; \rightarrow \;\; \overline{\{z=0\}}, \label{2d-Ex2-1} \\
  & \cE_1-\cE_2 \;\; \rightarrow \;\; \cE_4-\cE_5 \;\;\rightarrow \;\; \cE_7-\cE_8 \;\; \rightarrow \; \;\cE_9-\cE_{10} \;\; \rightarrow \;\; \cE_1-\cE_2.
   \label{2d-Ex2-3}
\end{align}

The following features of Fig.\ \ref{fig:Divisors} are essential for determining the divisor classes on the left-hand side and on the right-hand side of the first two patterns:
\begin{itemize}
\item Divisor $E_1$ intersects the (proper transform of) the line $\{z=0\}$ at $v_1=0$; this is exactly the point which is blown up to $E_2$;
\item Divisor $E_4$ intersects the (proper transform of) the divisor $E_3$ at $v_4=0$; this is exactly the point which is blown up to $E_5$;
\item Divisor $E_7$ intersects the (proper transform of) the divisor $E_6$ at $v_7=0$; this is exactly the point which is blown up to $E_8$;
\item Divisor $E_9$ intersects the (proper transform of) the line $\{z=0\}$ at $v_9=0$; this is exactly the point which is blown up to $E_{10}$.
\end{itemize}
From this, we can read off that the divisor class for $\overline{\{z=0\}}$ is $H-\cE_9-\cE_{10}-\cE_1-\cE_2$,
and the divisor classes for $\overline{\{y-z=0\}}$ and $\overline{\{x-z=0\}}$ are $H-\cE_6-\cE_9$ and $H-\cE_1-\cE_3$, respectively.

From this, we obtain the action of the lifted map $(f_X)_*$ on $\Pic(X)$:
\begin{equation}\label{Ex2 Picard action}
(f_X)_*: \;  \left( \begin{array}{c}
H  \\
\cE_1\\
\cE_2 \\ 
\cE_3 \\ 
\cE_4\\ 
\cE_5 \\
\cE_6 \\
\cE_7 \\ 
\cE_8 \\
\cE_9 \\
\cE_{10}\\
\end{array} \right)
\rightarrow
\left( \begin{array}{c}
2H-\cE_1-\cE_2-\cE_3\\
\cE_4\\
\cE_5\\
\cE_6\\
\cE_7\\
\cE_8\\
H-\cE_1-\cE_2\\
\cE_9\\
\cE_{10}\\
H-\cE_2-\cE_3\\
H-\cE_1-\cE_3\\
\end{array} \right)
\end{equation}
As explained in Section \ref{sect method Picard}, the degrees $\deg(f^n)$ are given by the $(H,H)$-entry of the $n$-th power of the matrix of this map. Alternatively, upon setting \eqref{div P}, we have:
\begin{theo}\label{theorem Ex2 recurrent mu} 
The quantities $d(n)=\deg(P_n)$, $\mu_i(n)=\mu_i(P_n)$ satisfy the following recurrent relations:
\begin{eqnarray}   
  d(n+1) & = & 2d(n)-\mu_1(n)-\mu_2(n)-\mu_3(n),\\
 \mu_1(n+1) & = & \mu_4(n),\\
 \mu_2(n+1) & = & \mu_5(n),\\
 \mu_3(n+1) & = & \mu_6(n),\\
 \mu_4(n+1) & = & \mu_7(n),\\
 \mu_5(n+1) & = & \mu_8(n),\\
 \mu_6(n+1) & = & d(n)-\mu_1(n)-\mu_2(n),\\
 \mu_7(n+1) & = & \mu_9(n),\\
 \mu_8(n+1) & = & \mu_{10}(n),\\
 \mu_9(n+1) & = & d(n)-\mu_2(n)-\mu_3(n),\\
 \mu_{10}(n+1) & = & d(n)-\mu_1(n)-\mu_3(n).
\end{eqnarray}
\end{theo}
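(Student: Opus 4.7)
The plan is to apply Corollary \ref{cor system d mu} directly: the iterated proper pull-backs $P_n$ satisfy $[P_{n+1}] = f_X^*[P_n]$ in $\operatorname{Pic}(X)$, and the corollary identifies the matrix governing the evolution of the coefficient vector $(d(n), \mu_1(n), \ldots, \mu_{10}(n))^T$ with the matrix of $f_X^*$ in the basis $H, \mathcal{E}_1, \ldots, \mathcal{E}_{10}$, conjugated by $D = \operatorname{diag}(1,-1,\ldots,-1)$. Since the lift $f_X$ is an automorphism of $X$, as established in the preceding subsection, Remark \ref{remark automorphism} further reduces this conjugated matrix to $M^T$, the transpose of the matrix $M$ of $(f_X)_*$ read off from \eqref{Ex2 Picard action}. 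The eleven recurrences in the statement are then precisely the eleven rows of $M^T$, equivalently the eleven columns of $M$.

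Concretely, the first column $(f_X)_* H = 2H - \mathcal{E}_1 - \mathcal{E}_2 - \mathcal{E}_3$ yields the degree recurrence $d(n+1) = 2 d(n) - \mu_1(n) - \mu_2(n) - \mu_3(n)$. The seven ``transport'' columns in which $(f_X)_* \mathcal{E}_j = \mathcal{E}_k$, with $(j,k) \in \{(1,4),(2,5),(3,6),(4,7),(5,8),(7,9),(8,10)\}$, yield the trivial identities $\mu_j(n+1) = \mu_k(n)$. Finally, the three nontrivial columns $(f_X)_* \mathcal{E}_6 = H - \mathcal{E}_1 - \mathcal{E}_2$, $(f_X)_* \mathcal{E}_9 = H - \mathcal{E}_2 - \mathcal{E}_3$, and $(f_X)_* \mathcal{E}_{10} = H - \mathcal{E}_1 - \mathcal{E}_3$ produce the remaining recurrences for $\mu_6(n+1)$, $\mu_9(n+1)$, and $\mu_{10}(n+1)$.

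There is no genuine obstacle beyond careful bookkeeping. The only subtle point is the interplay of signs between the convention $[P_n] = d(n) H - \sum_i \mu_i(n) \mathcal{E}_i$ in \eqref{div P} and the conjugation by $D$ in Corollary \ref{cor system d mu}; their combined effect is that a $-\mathcal{E}_i$ coefficient in the expansion of $(f_X)_* B_j$ contributes a $-\mu_i(n)$ term (rather than $+\mu_i(n)$) in the recurrence for the coefficient associated with $B_j$, which matches the sign pattern displayed in the statement. Spot-checking the entries of $M^T$ against the formulas for $d(n+1)$ and, say, $\mu_6(n+1)$ confirms the correspondence, after which the remaining recurrences follow by identical inspection.
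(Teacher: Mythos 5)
Your proof is correct and follows exactly the paper's own argument: the paper likewise derives the theorem from Corollary \ref{cor system d mu} and Remark \ref{remark automorphism} (using that $f_X$ is an automorphism) and reads the recurrences directly off the action of $(f_X)_*$ in \eqref{Ex2 Picard action}. You merely spell out the column-by-column bookkeeping that the paper leaves implicit.
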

\begin{proof}
This follows from Corollary \ref{cor system d mu} and Remark \ref{remark automorphism}, since $f_X$ in the present case is an automorphism. Recurrent equations are directly read off the action of $(f_X)_*$ in \eqref{Ex2 Picard action}.
\end{proof}

\subsection{Relation between two methods}

For any homogeneous polynomial $P(x,y,z)$ we now introduce ten local indices $\mu_i(P)$, $i=1,\ldots,10$, corresponding to ten sigma-processes used to blow up $\mbP^2$ to $X$.

At $p_1=[0:1:0]$:
 \begin{eqnarray}
    P(u_1,1,u_1v_1) & = & u_1^{\mu_1(P)}\bar P(u_1,v_1), \quad \bar P(0,v_1)\not \equiv 0,\label{local index 1} \\
    P(u_2,1,u_2^2v_2) & = & u_2^{\mu_1(P)}u_2^{\mu_2(P)}\widetilde P(u_2,v_2),  \quad \widetilde P(0,v_2)\not \equiv 0. \label{local index 2}
 \end{eqnarray}
At $p_2=[1:0:1]$:
 \begin{eqnarray}
    P(1,u_3v_3,1-u_3) & = & u_3^{\mu_3(P)}\bar P(u_3,v_3), \quad \bar P(0,v_3)\not \equiv 0,  \label{local index 3}\\
    P(1,u_4^2v_4,1-u_4v_4) & = & (u_4v_4)^{\mu_3(P)}u_4^{\mu_4(P)}\hat P(u_4,v_4), \quad \hat P(0,v_4)\not \equiv 0, \label{local index 4}\\
    P(1,u_5^3v_5,1-u_5^2v_5) & = & (u_5^2v_5)^{\mu_3(P)}u_5^{\mu_4(P)}u_5^{\mu_5(P)}\widetilde P(u_5,v_5), \quad \widetilde P(0,v_5)\not \equiv 0. 
    \qquad \label{local index 5}
 \end{eqnarray}
 At $p_3=[0:1:1]$:
 \begin{eqnarray}
    P(u_6v_6,1+u_6,1) & = & u_6^{\mu_6(P)}\bar P(u_6,v_6), \quad \bar P(0,v_6)\not \equiv 0,  \label{local index 6}\\
    P(u_7^2v_7,1+u_7v_7,1) & = & (u_7v_7)^{\mu_6(P)}u_7^{\mu_7(P)}\hat P(u_7,v_7), \quad \hat P(0,v_7)\not \equiv 0, \label{local index 7}\\
    P(u_8^3v_8,1+u_8^2v_8,1) & = & (u_8^2v_8)^{\mu_6(P)}u_8^{\mu_7(P)}u_8^{\mu_8(P)}\widetilde P(u_8,v_8), \quad \widetilde P(0,v_8)\not \equiv 0. \qquad \label{local index 8}
 \end{eqnarray}
 At $p_4=[1:0:0]$:
 \begin{eqnarray}
    P(1,u_9,u_9v_9) & = & u_9^{\mu_9(P)}\bar P(u_9,v_9), \quad \bar P(0,v_9)\not \equiv 0,\label{local index 9} \\
    P(1,u_{10},u_{10}^2v_{10}) & = & u_{10}^{\mu_9(P)}u_{10}^{\mu_{10}(P)}\widetilde P(u_{10},v_{10}),  \quad \widetilde P(0,v_{10})\not \equiv 0. \label{local index 10}
 \end{eqnarray}
 In each group of equations, the arguments of $P$ for the next one are obtained from those from the previous one by a sigma-process, and the right-hand side gains in this transition an extra factor corresponding to the index of $P$ with respect to this sigma-process. 
 
 We remark that relations from Theorem \ref{theorem Ex2 recurrent mu} can be alternatively proven by a direct analysis similar to that used for the proof of Theorem \ref{theorem Ex2 recurrent nu}. 

\begin{pro} \label{prop Ex2 nu from mu}
Given a homogeneous polynomial $P$, the indices $\nu_2(P)$, $\nu_5(P)$, $\nu_8(P)$ and $\nu_{10}(P)$ introduced in Definition \ref{definition indices nu}, are related to the indices $\mu_i(P)$ via
\begin{eqnarray}
    \nu_2(P) & = & \mu_1(P)+\mu_2(P), \label{nu2 thru mu} \\
    \nu_5(P) & = & 2\mu_3(P)+\mu_4(P)+\mu_5(P), \label{nu5 thru mu} \\
    \nu_8(P) & = & 2\mu_6(P)+\mu_7(P)+\mu_8(P), \label{nu8 thru mu} \\
    \nu_{10}(P) & = & \mu_9(P)+\mu_{10}(P). \label{nu10 thru mu}
\end{eqnarray}
With these relations, and upon setting $\nu_3(P)=\mu_3(P)$ and $\nu_6(P)=\mu_6(P)$, recurrent relations of Theorem \ref{theorem Ex2 recurrent nu} follow from those of Theorem \ref{theorem Ex2 recurrent mu}.
\end{pro}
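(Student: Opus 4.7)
The plan is to establish the four identities \eqref{nu2 thru mu}--\eqref{nu10 thru mu} by a direct inspection of the defining equations of $\nu_i$ and $\mu_j$, and then to obtain the recurrences of Theorem \ref{theorem Ex2 recurrent nu} by substituting these identities (together with $\nu_3(P)=\mu_3(P)$ and $\nu_6(P)=\mu_6(P)$) into the recurrences of Theorem \ref{theorem Ex2 recurrent mu}. No genuine analytic work is needed beyond what has already been done in the preceding material; the proposition is essentially a dictionary between two parametrisations of the same exceptional fibres.

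For the first part, I compare Definition \ref{definition indices nu} of $\nu_2(P)$ with equation \eqref{local index 2}: the left-hand sides coincide, and in \eqref{local index 2} the polynomial $\widetilde P$ satisfies $\widetilde P(0,v_2)\not\equiv 0$, so the total exponent of $u_2$ is exactly $\mu_1(P)+\mu_2(P)$, giving \eqref{nu2 thru mu}. The same reasoning applied to \eqref{local index 10} gives \eqref{nu10 thru mu}. For \eqref{nu5 thru mu}, I compare the analogue of Definition \ref{definition indices nu} at $\phi_5$ (from \eqref{dP1 2D E5}) with \eqref{local index 5}; the right-hand side of \eqref{local index 5} is $u_5^{2\mu_3(P)+\mu_4(P)+\mu_5(P)}\,v_5^{\mu_3(P)}\widetilde P(u_5,v_5)$, and since $v_5^{\mu_3(P)}\widetilde P(0,v_5)\not\equiv 0$ (because $\widetilde P(0,v_5)\not\equiv 0$), the maximal power of $u_5$ is indeed $2\mu_3(P)+\mu_4(P)+\mu_5(P)$. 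The identity \eqref{nu8 thru mu} is analogous, using \eqref{local index 8} and $\phi_8$.

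For the second part, I simply apply these relations term by term. For instance, $\nu_5(n+1)=2\mu_3(n+1)+\mu_4(n+1)+\mu_5(n+1)=2\mu_6(n)+\mu_7(n)+\mu_8(n)=\nu_8(n)$, which reproduces \eqref{dP1 2D nu5}; and
\begin{equation*}
\nu_{10}(n+1)=\mu_9(n+1)+\mu_{10}(n+1)=\bigl(d(n)-\mu_2(n)-\mu_3(n)\bigr)+\bigl(d(n)-\mu_1(n)-\mu_3(n)\bigr)=2d(n)-\nu_2(n)-2\nu_3(n),
\end{equation*}
which is \eqref{dP1 2D nu10}. The remaining equations \eqref{dP1 2D d}, \eqref{dP1 2D nu2}, \eqref{dP1 2D nu3}, \eqref{dP1 2D nu6}, \eqref{dP1 2D nu8} are obtained by the same bookkeeping, each one a one-line substitution.

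I do not expect any real obstacle: the only point requiring a moment of care is verifying, in \eqref{nu5 thru mu} and \eqref{nu8 thru mu}, that multiplying $\widetilde P(u,v)$ by a power of the second coordinate ($v_5^{\mu_3}$ or $v_8^{\mu_6}$) does not spoil the non-vanishing condition on the residue at $u=0$; this is immediate from the standing assumption $\widetilde P(0,\cdot)\not\equiv 0$. Everything else is formal.
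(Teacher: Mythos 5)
Your proposal is correct and follows the same route as the paper: the four identities are read off directly from the defining equations \eqref{local index 2}, \eqref{local index 5}, \eqref{local index 8}, \eqref{local index 10} (the paper's proof says exactly this), and the second part is the same term-by-term substitution the paper dismisses as a straightforward check. Your added remark that the factors $v_5^{\mu_3}$ and $v_8^{\mu_6}$ do not spoil the non-vanishing condition at $u=0$ is a useful explicit detail that the paper leaves implicit.
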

\begin{proof} The first statement follows directly from \eqref{local index 2}, \eqref{local index 5}, \eqref{local index 8}, and \eqref{local index 10}, respectively. The second is a result of a straightforward check.
\end{proof}

\subsection{Finding an integral of motion}

To find the integral of motion, we again compute the eigenvectors corresponding to the eigenvalue 1. There are three independent such vectors:
$$
H-\cE_1-\cE_5-\cE_8-\cE_{10}, \quad H-\cE_6-\cE_4-\cE_7-\cE_9, \quad H-\cE_3-\cE_2.
$$
With a linear combination of these three vectors, we find the divisor class with the self-intersection number 0,
$$
4H-2\cE_3-2\cE_2-\cE_6-\cE_1-\cE_4-\cE_5-\cE_7-\cE_8-\cE_9-\cE_{10},
$$
whose linear system has dimension 1, and is of the form
$$
-hx^2y^2+x^3z+3x^2yz+3xy^2z+y^3z-3x^2z^2-6xyz^2-3y^2z^2+3xz^3+3yz^3-z^4.
$$
Back to inhomogeneous coordinates, this means we have the following candidate for a rational integral of motion:
$$
h(x,y)=\frac{(x+y-1)^3}{x^2y^2},
$$
which can be now verified by direct computation.


\section{Example 3: a linearizable 2D system}
\label{sect linearizable}

The example in this section illustrates the situation when the algebraically stable modification of a birational map of $\mbP^2$ is \emph{not} an automorphism. A straightforward application of our algorithm would require an infinite number of blow-ups, but actually the algorithm can be made finite by a more careful analysis. This corresponds to the existence of an algebraically stable modification via a finite number of blow-ups. The system is actually linearizable, and the situation closely resembles that of \cite{Takenawa_et-al_2003}. 

\subsection{Definition of the map}

The following second order difference equation was considered in \cite{Ablowitz_Halburd_Herbst}: 
$$
\xi_{n+1} = \xi_n + \dfrac{\xi_n - \xi_{n-1}}{1+(\xi_n - \xi_{n-1})}.
$$ 
It can be rewritten as a 2D discrete system 
$$ 
(\xi_{n+1},\eta_{n+1}) = \left(\xi_n + \dfrac{\xi_n - \eta_n}{1+(\xi_n - \eta_n)}, \xi_n \right), 
$$ 
and finally as a birational map in homogeneous coordinates
\begin{equation}\label{A f}
f \begin{bmatrix}
x \\ y \\ z
\end{bmatrix} = 
\begin{bmatrix}
x (x-y+z)+(x-y)z \\ x(x-y+z) \\ z(x-y+z)
\end{bmatrix}, \qquad 
f^{-1}
\begin{bmatrix}
x \\ y \\ z
\end{bmatrix}= 
\begin{bmatrix}
y(y-x+z) \\ y(y-x+z)+(y-x)z  \\ z(y-x+z)
\end{bmatrix}.
\end{equation}

The singularities of the map $f$ and of the inverse map $f^{-1}$ are
$$
{\mathcal I}(f)=\{p_2=[1:1:0],\; p_3=[0:1:0]\}
$$ 
and 
$$
{\mathcal I}(f^{-1})=\{p_1=[1:0:0], \; p_2=[1:1:0]\}.
$$
The respective critical sets are:
$$
{\mathcal C}(f)=\{z=0\}\cup\{x-y+z=0\}
$$ 
and 
$$
{\mathcal C}(f^{-1})=\{z=0\}\cup\{y-x+z=0\}.
$$
Both components of ${\mathcal C}(f)$ are degree lowering curves:
\begin{equation}\label{ex3-singconf-1}
    f: \quad \{z=0\} \;\rightarrow \; p_2   \; \in \; {\mathcal I}(f).
\end{equation}
and
\begin{equation}\label{ex3-singconf-2}
    f: \quad \{x-y+z=0\} \;\rightarrow \; p_1 \;\rightarrow \; p_2  \; \in \; {\mathcal I}(f).
\end{equation}

\subsection{Regularizing the map $f$}
\label{ss:regf}

{\bf Action of $f$ on the critical line $\{x-y+z=0\}$.} This line is mapped by $f$ to the point $p_1=[1:0:0]$. In order to resolve this, we  blow up this point. We use the following change to coordinates $(u_1,v_1)$ in the affine chart $x=1$:
\begin{equation}\label{A E1}
\phi_1=\pi_1:\; \left\{\begin{array}{l} x=1, \\ y=u_1, \\ z=u_1v_1.\end{array}\right.
\end{equation}
One can interpret elements $(0,v_1)\in E_1$ as the germs $\{x=1, \; z=v_1y\}$ over $p_1$.
In coordinates \eqref{A E1} for the image $[\widetilde x:\widetilde y:\widetilde z]$, we compute:
$$
\widetilde u_1=\frac{\widetilde{y}}{\widetilde{x}}=\frac{x(x-y+z)}{x(x-y+z)+(x-y)z}, \quad \widetilde v_1=\frac{\widetilde z}{\widetilde y}=\frac{z}{x}.
$$
As soon as $x-y+z=0$, we have $\widetilde u_1=0$ and $\widetilde v_1=z/x$. Thus, (the proper transform of) the line $\{x-y+z=0\}$ is biregularly mapped to the exceptional divisor $E_1$:
$$
f|_{\{x-y+z=0\}}:[x:x+z:z]\mapsto (0,\widetilde v_1)=(0,z/x)\in E_1.
$$
\smallskip

{\bf Action of $f$ on the critical line $\{z=0\}$.}  This line is mapped by $f$ to the point $p_2=[1:1:0]$. In order to resolve this, we  blow up this point. We use the following change to coordinates $(u_2,v_2)$ in the affine chart $x=1$:
\begin{equation}\label{A E2}
\phi_2=\pi_2:\; \left\{\begin{array}{l} x=1+u_2v_2, \\ y=1, \\ z=u_2.\end{array}\right.
\end{equation}
The exceptional divisor is given by $E_2=\{u_2=0\}$.
In coordinates \eqref{A E2} for the image $[\widetilde x:\widetilde y:\widetilde z]$, we compute:
$$
\widetilde u_2=\frac{\widetilde{z}}{\widetilde{y}}=\frac{z(x-y+z)}{x (x-y+z)}=\frac{z}{x}, \quad \widetilde v_2=\frac{{\widetilde x}-{\widetilde{y}}}{\widetilde{z}}=\frac{(x-y)z}{z(x-y+z)}=\frac{x-y}{x-y+z}.
$$ 
Thus, the image by $f$ of the line $\{z=0\}$ is the single point $(\widetilde u_2,\widetilde v_2)=(0,1)$ in $E_2$. In order to resolve this, one more blow-up is necessary.  For this second blow-up, we use the following change to coordinates $(u_3,v_3)$:
\begin{equation}\label{A E3}
\pi_3:\; \left\{\begin{array}{l} u_2=u_3, \\ v_2=1+u_3v_3\end{array}\right. \quad \Rightarrow\quad 
\phi_3=\pi_2\circ\pi_3:\; \left\{\begin{array}{l} x=1+u_3+u_3^2v_3, \\ y=1, \\ z=u_3. \end{array}\right.
\end{equation}
One can interpret elements $(0,v_3)\in E_3$ as the germs $\{y=1, \; x=1+z+v_3z^2\}$ over $p_2$.
In coordinates \eqref{A E3} for the image $[\widetilde x:\widetilde y:\widetilde z]$, we compute (at $z=0$):
$$
\widetilde u_3=\frac{\widetilde{z}}{\widetilde{y}}=\frac{z}{x}, \quad \widetilde v_3=\frac{(\widetilde x-\widetilde y-\widetilde z)\widetilde y}{\widetilde z^2}=-\frac{x}{x-y+z}.
$$
Thus, (the proper transform of) the line $\{z=0\}$ is biregularly mapped to the exceptional divisor $E_3$:
$$
f|_{\{z=0\}}:[x:y:0] \mapsto (0,\widetilde v_3)=\big(0,-x/(x-y)\big)\in E_3.
$$
\smallskip

{\bf Action of $f$ on $E_2$.} It appears useful to consider next the action of $f$ on $E_2$, prior to considering the propagation of indices.  Recall that we parametrize a neighborhood of $E_2$ by \eqref{A E3}. We compute (if $v_2\neq -1$):
\begin{eqnarray*}
f\left[\begin{array}{c} 1+u_2v_2\\ 1 \\ u_2\end{array}\right] & = & \left[\begin{array}{c} (1+u_2v_2)u_2(1+v_2)+u_2^2v_2 \\ (1+u_2v_2)u_2(1+v_2) \\ u_2^2(1+v_2) \end{array}\right]=\left[\begin{array}{c}  1+\dfrac{u_2v_2}{(1+u_2v_2)(1+v_2)} \\ 1 \\  \dfrac{u_2}{1+u_2v_2}  \end{array}\right]\\
 & = & \left[\begin{array}{c} 1+ \widetilde{u}_2\widetilde{v}_2 \\  1 \\ \widetilde{u}_2 \end{array}\right],
\end{eqnarray*}
where
$$
\widetilde{u}_2=\dfrac{u_2}{1+u_2v_2},   \quad  \widetilde{v}_2=\dfrac{v_2}{1+v_2}.
$$
We see that $f$ maps $E_2\setminus \{(0,-1)\}$ to itself, with $\widetilde{v}_2=v_2/(1+v_2)$. The point  $(0,-1)\in E_2$ is a singular point of $f$; the exceptional divisor of the further blow-up at this point is biregularly mapped to the line $\{z=0\}$.

{\bf Action of $f$ on $E_1$.} Recall that we parametrize a neighborhood of $E_1$ by \eqref{A E1}. We compute:
\begin{eqnarray*}
f\left[\begin{array}{c} 1\\ u_1 \\ u_1v_1\end{array}\right] & = & \left[\begin{array}{c} 1-u_1+u_1v_1+u_1v_1(1-u_1)\\ 1-u_1+u_1v_1 \\ u_1v_1(1-u_1+u_1v_1) \end{array}\right]=\left[\begin{array}{c}  1+\dfrac{u_1v_1(1-u_1)}{1-u_1+u_1v_1} \\ 1 \\  u_1v_1  \end{array}\right]=:\left[\begin{array}{c}  \widetilde{x} \\  \widetilde{y} \\ \widetilde{z} \end{array}\right].
\end{eqnarray*}
In coordinates \eqref{A E2}, we have: 
$$
\widetilde{u}_2=u_1v_1,   \quad  \widetilde{v}_2=\dfrac{1-u_1}{1-u_1+u_1v_1}=1-\frac{u_1v_1}{1-u_1+u_1v_1}=1-\widetilde{u}_2-\widetilde{u}_2^2\frac{1-v_1}{v_1}+O(\widetilde{u}_2^3).
$$
We see that $f$ maps $E_1$ to a single point $(\widetilde{u}_2,\widetilde{v}_2)=(0,1)\in E_2$.  According to the result above on the action of $f$ on $E_2$, we can conclude that the further iterates $f^m(E_1)$ are definite points $(0,v_2^{(m)})\in E_2$ with $v_2^{(m+1)}=v_2^{(m)}/(1+v_2^{(m)})$, so that
$v_2^{(m)}=1/m$. In other words, on the surface obtained from $\mbP^2$ by blow-ups at $p_1$ and at $p_2$, the line $\{x-y+z=0\}$ is not a degree lowering curve.  The same is true for the line $\{z=0\}$. The lift of $f$ to this surface is algebraically stable.

For the algorithm for computing $\deg(f^n)$ via local indices of polynomials, we would need to propagate by $f$ the blow-ups $\phi_1$ at $p_1$ and $\phi_3$ at $p_2$. To resolve the contraction of $E_1$ to the point $(0,1)\in E_2$, further blow-ups at this point would be necessary: first, according to \eqref{A E3}, where we find
$$
\widetilde{u}_3=u_1v_1,   \quad  \widetilde{v}_3=-1+O(\widetilde u_3).
$$
so that $f$ maps $E_1$ a single point $(\widetilde{u}_3,\widetilde{v}_3)=(0,-1)\in E_3$. A further blow-up at this point:
\begin{equation}\label{A E4}
\pi_4:\;\left\{\begin{array}{l} u_3=u_4, \\ v_3=-1+u_4v_4\end{array}\right. \quad \Rightarrow\quad 
\phi_4=\pi_2\circ \pi_3\circ\pi_4:\; \left\{\begin{array}{l} x=1+u_4-u_4^2+u_4^3v_4, \\ y=1, \\ z=u_4, \end{array}\right.
\end{equation}
will give a regular map 
$$
f|_{E_1}:{E_1}\ni (0,v_1)\mapsto (0,\widetilde v_4)=\big(0,-(1-v_1)/v_1\big)\in E_4.
$$
The further propagation steps would lead to blow-up at the points $(0,v_2^{(2)})=(0,1/2)\in E_2$, $(0,v_2^{(3)})=(0,1/3)\in E_2$, etc. Thus, the full regularization of the orbit of the line $\{x-y+z=0\}$ would require an infinite sequence of blow-ups. The same is true for the orbit of the line $\{z=0\}$. Compare this situation with \cite{Takenawa_et-al_2003}. However, as we will see below, one does not need indices other than $\nu_1=\nu_{\phi_1}$, $\nu_2=\nu_{\phi_2}$ and $\nu_3=\nu_{\phi_3}$ to obtain a finite system of recurrent relations.

\subsection{Degree computation based on local indices}

\begin{de} Let $P(x,y,z)$ be a generic homogeneous polynomial. 
\begin{itemize}
	\item We define the index $\nu_1(P)=\nu_{\phi_1}(P)$ by 
$$
P\circ\phi_1=P(1,u_1,u_1v_1) = u_1^{\nu_1(P)} \widetilde{P}(u_1,v_1),
$$ 
where $\widetilde P(0,v_1) \not\equiv 0$.
\item We define the index $\nu_2(P)=\nu_{\phi_2}(P)$ by 
$$
P\circ\phi_2=P(1+u_2v_2, 1, u_2) = u_2^{\nu_2(P)} \widetilde{P}(u_2,v_2),
$$
where $\widetilde{P}(0,v_2) \not\equiv 0$.
	\item We define the index $\nu_3(P)=\nu_{\phi_3}(P)$ by 
$$
P\circ \phi_3=P(1+u_3+u_3^2v_3, 1, u_3) = u_3^{\nu_3(P)} \widetilde{P}(u_3,v_3),
$$
where $\widetilde{P}(0,v_3) \not\equiv 0$.
\end{itemize}
\end{de}

We have the following result (a concrete realization of Theorem \ref{theorem split off} in the present case).

\begin{pro}\label{lemma Example 3 pullback}
For a homogeneous polynomial $P=P(x,y,z)$, its pullback $f^*P = P \circ \tilde f$ is divisible by $(x-y+z)^{\nu_1(P)} z^{\nu_3(P)}$ (and by no higher degrees of $x-y+z$, resp. of $z$).
\end{pro}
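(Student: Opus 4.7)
The proof proceeds by applying Theorem \ref{theorem split off} separately to each of the two degree-lowering components of $\mathcal{C}(f)$, namely $\{x-y+z=0\}$ and $\{z=0\}$, using the blow-ups $\phi_1$ and $\phi_3$ already constructed in Section \ref{ss:regf}. The structural prerequisites have been verified there: $\phi_1$ is the single sigma-process at $p_1$, and the lift of $f$ maps the proper transform of $\{x-y+z=0\}$ birationally onto $E_1=\{u_1=0\}$; while $\phi_3=\pi_2\circ\pi_3$ is a tower of two sigma-processes over $p_2$, with the lift of $f$ mapping the proper transform of $\{z=0\}$ birationally onto the top-level exceptional divisor $E_3=\{u_3=0\}$.

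To invoke Theorem \ref{theorem split off} for the factor $(x-y+z)^{\nu_1(P)}$, I will compute the pullbacks of the chart coordinates $u_1=y/x$ and $v_1=z/y$ of $\phi_1$. Using the formula \eqref{A f}, these are $f^*u_1=Y/X=x(x-y+z)/X$ and $f^*v_1=Z/Y=z/x$, so that $f^*u_1=(x-y+z)\,\widetilde u_1$ with $\widetilde u_1=x/X$. Since $X|_{x-y+z=0}=-z^2$, the restriction $\widetilde u_1|_{x-y+z=0}=-x/z^2$ is not identically zero, and $f^*v_1=z/x$ is likewise not identically zero on $\{x-y+z=0\}$. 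Theorem \ref{theorem split off} then yields exactly the divisibility of $f^*P$ by $(x-y+z)^{\nu_1(P)}$, and by no higher power.

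For the factor $z^{\nu_3(P)}$, I carry out the analogous computation with $\phi_3$. Here $u_3=z/y$ and $v_3=y(x-y-z)/z^2$, so a direct computation gives $f^*u_3=Z/Y=z/x$ and, using $X-Y-Z=-z^2$, also $f^*v_3=Y(X-Y-Z)/Z^2=-x/(x-y+z)$. Thus $f^*u_3=z\cdot\widetilde u_3$ with $\widetilde u_3=1/x$, not identically zero on $\{z=0\}$, while $f^*v_3$ is likewise not identically zero there. Theorem \ref{theorem split off} yields divisibility by $z^{\nu_3(P)}$ with no higher power. Since $x-y+z$ and $z$ are coprime irreducible polynomials, the two divisibilities combine multiplicatively to give the claimed $(x-y+z)^{\nu_1(P)}z^{\nu_3(P)} \mid f^*P$, with no higher combined power.

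The one subtlety, rather than a genuine obstacle, is that in applying Theorem \ref{theorem split off} to $\phi_3$ one must use the top-level coordinate $u_3$—not the intermediate $u_2$ of the first sigma-process $\pi_2$—since it is $E_3$, not $E_2$, that is the birational image of $\{z=0\}$ under the lift of $f$. This is precisely the content of the analysis in Section \ref{ss:regf} showing that $f$ first contracts $\{z=0\}$ to the single point $(0,1)\in E_2$, so that the second blow-up $\pi_3$ is genuinely needed before birationality is attained.
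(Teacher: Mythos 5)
Your proposal is correct and follows essentially the same route as the paper: both arguments come down to computing the pull-backs of the blow-up chart coordinates ($Y/X=(x-y+z)\cdot x/X$, $Z/Y=z/x$, and $Y(X-Y-Z)/Z^2=-x/(x-y+z)$) and extracting the factor of the critical polynomial from the first coordinate. The only cosmetic difference is that you cite Theorem \ref{theorem split off} after verifying its hypotheses, whereas the paper inlines the same substitution into the explicit expansion of $P$ in each chart; your checks that $\widetilde u_1=x/X$ and $\widetilde u_3=1/x$ do not vanish on the respective critical lines, together with the birationality established in Section \ref{ss:regf}, are exactly what is needed.
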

\begin{proof}
\begin{enumerate}[1)]
\item In the chart \eqref{A E1}, we compute:
$$
P(1,y,z)=u_1^{\nu_1}\widetilde P(u_1,v_1)=y^{\nu_1}\widetilde P\Big(y,\frac{z}{y}\Big).
$$
In homogeneous coordinates,
$$
P(x,y,z)=x^d P\Big(1,\frac{y}{x},\frac{z}{x}\Big)=y^{\nu_1}x^{d-\nu_1}\widetilde P\Big(\frac{y}{x},\frac{z}{y}\Big).
$$
Setting $\tilde f(x,y,z)=(X,Y,Z)$, we compute, according to \eqref{A f}:
$$
f^*P(x,y,z)=Y^{\nu_1}X^{d-\nu_1}\widetilde P\Big(\frac{Y}{X},\frac{Z}{Y}\Big)=\big(x(x-y+z)\big)^{\nu_1}X^{d-\nu_1}\widetilde P\Big(\frac{x(x-y+z)}{X},\frac{z}{x}\Big).
$$
This polynomial is manifestly divisible by $(x-y+z)^{\nu_1}$ (and by no higher degree of $x-y+z$). 
\item Likewise, we compute in the chart \eqref{A E3}:
$$
P(x,1,z)=u_3^{\nu_3}\widetilde P(u_3,v_3)=z^{\nu_3}\widetilde P\Big(z,\frac{x-1-z}{z^2}\Big).
$$
In homogeneous coordinates,
$$
P(x,y,z)=y^d P\Big(\frac{x}{y},1,\frac{z}{y}\Big)=z^{\nu_3}y^{d-\nu_3}\widetilde P\Big(\frac{z}{y},\frac{(x-y-z)y}{z^2}\Big).
$$
Therefore, upon a straightforward computation according to \eqref{A f}:
$$
\begin{aligned}
f^*P(x,y,z)&=Z^{\nu_3}Y^{d-\nu_3}\widetilde P\Big(\frac{Z}{Y},\frac{(X-Y-Z)Y}{Z^2}\Big)\\&=z^{\nu_3}x^{d-\nu_3}(x-y+z)^d\widetilde P\Big(\frac{z}{x},-\frac{x}{x-y+z}\Big).
\end{aligned}
$$
This polynomial is manifestly divisible by $z^{\nu_3}$ (and by no higher degree of $z$). 
\end{enumerate}
\end{proof}

\begin{lemma} \label{Ex3 lemma pullback}
The indices $\nu_1, \nu_2, \nu_3$ satisfy the following relations:
\begin{eqnarray} 
\nu_1 (f^*P) & = &  \nu_3(P), \label{A pullback nu1}\\
\nu_2(f^*P) & =  & \deg(P) + \nu_2(P), \label{A pullback nu2}\\
\nu_3(f^*P) & =  & \deg(P) + \nu_2(P). \label{A pullback nu3}
\end{eqnarray}
\end{lemma}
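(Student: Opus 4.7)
The plan is to substitute each blow-up chart $\phi_i$ directly into the lift $\tilde f$ from~\eqref{A f}, extract any common factor of $u_i$ from the three components, and then read what remains through one of the subsequent blow-up charts ($\phi_2$ or $\phi_3$) at the target point. This mirrors the three propagation mechanisms of Section~\ref{sect propagate}: case~(a) is an instance of the ``critical but non-indeterminate'' situation~\eqref{propagation critical} (the centre $p_1=[1:0:0]$ of $\phi_1$ lies on $\{z=0\}\subset\mathcal C(f)$ but not in $\mathcal I(f)$), while~(b) and~(c) correspond to~\eqref{propagation inverse critical} ($\phi_2$ and $\phi_3$ are both centred at $p_2\in\mathcal I(f)$).

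For~(a), direct substitution gives
\[
\tilde f\circ \phi_1 = \bigl(1-u_1+2u_1v_1-u_1^2v_1,\ 1-u_1+u_1v_1,\ u_1v_1(1-u_1+u_1v_1)\bigr),
\]
which admits no common factor of $u_1$ and whose limit at $u_1=0$ is $p_2$. Matching against the $\phi_3$ chart via $\tilde u_3 = Z/Y$, $\tilde v_3 = (X-Y-Z)Y/Z^2$, one finds $\tilde u_3 = u_1v_1$ and $\tilde v_3 = -1/(1-u_1+u_1v_1)$, which tends to $-1$ as $u_1\to 0$. Writing
\[
P\circ \tilde f\circ \phi_1 = Y^{\deg P}\,\tilde u_3^{\nu_3(P)}\,\widetilde P(\tilde u_3,\tilde v_3),
\]
and using $Y|_{u_1=0}=1$ together with the non-vanishing of $\widetilde P(0,-1)\cdot v_1^{\nu_3(P)}$ as a polynomial in $v_1$, gives $\nu_1(f^*P)=\nu_3(P)$.

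For~(b) and~(c), all three components of $\tilde f\circ \phi_i$ share an honest factor of $u_i$ (a consequence of $p_2\in\mathcal I(f)$), contributing $u_i^{\deg P}$ when pulled out of $P(X,Y,Z) = u_i^{\deg P}P(X/u_i,Y/u_i,Z/u_i)$. The residual map still lands at $p_2$ when $u_i=0$, and reading it through the $\phi_2$ chart via $\tilde u_2 = Z/Y$, $\tilde v_2 = (X-Y)/Z$ yields
\[
\tilde u_2 = \frac{u_2}{1+u_2v_2},\quad \tilde v_2 = \frac{v_2}{1+v_2} \qquad \text{in case (b)},
\]
and
\[
\tilde u_2 = \frac{u_3}{1+u_3+u_3^2v_3},\quad \tilde v_2 = \frac{1+u_3v_3}{2+u_3v_3} \qquad \text{in case (c)}.
\]
In both cases $\tilde u_2$ is $u_i$ times a unit near $u_i=0$, so expressing the residual $P$ in the $\phi_2$ chart extracts an additional factor $u_i^{\nu_2(P)}$, giving $\nu_i(f^*P) = \deg(P)+\nu_2(P)$.

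The only subtlety I expect is checking that these split-off powers are exact and not hiding an extra factor of $u_i$ inside the residual $\widetilde P$. This reduces to the non-vanishing of $\widetilde P(0,\tilde v_*)$ at the relevant limit value of the second chart coordinate; for~(b) this is automatic because the limit $\tilde v_2 = v_2/(1+v_2)$ is non-constant in $v_2$, while for~(a) and~(c) it is the genericity condition $\widetilde P(0,-1)\neq 0$, respectively $\widetilde P(0,1/2)\neq 0$, which is preserved along the iterated pull-backs $P=P_n$ of a generic linear $P_0$ in the context where the lemma is applied.
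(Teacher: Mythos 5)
Your proof is correct and follows essentially the same route as the paper's, which simply cites the chart computations already carried out in Section \ref{ss:regf} (the images of $E_1$, $E_2$ and $E_3$ under $f$, landing in $E_3$ at $\tilde v_3=-1$, in $E_2$ at $\tilde v_2=v_2/(1+v_2)$, and in $E_2$ at $\tilde v_2=1/2$, respectively); your explicit treatment of the exactness of the split-off powers, via the conditions $\widetilde P(0,-1)\neq 0$ and $\widetilde P(0,1/2)\neq 0$, makes precise a genericity assumption the paper leaves implicit. (One cosmetic remark: your cases (b) and (c) are instances of the mixed formula \eqref{propagation singular} rather than of \eqref{propagation inverse critical}, since the resulting index retains the term $\nu_2(P)$ coming from $E_2$ being mapped into itself.)
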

\begin{proof}
Eq. \eqref{A pullback nu1} follows from the fact that the image by $f$ of the blow-up \eqref{A E1} is given by \eqref{A E4} which manifestly belongs to \eqref{A E3}. For \eqref{A pullback nu2}, we refer to the end of Section 1: the image by $f$ of $E_2$ is $E_2$, and all three components of $\tilde{f}\circ \phi_2$ are divisible by $u_2$. Finally, \eqref{A pullback nu3} follows in the same way from the fact that the image by $f$ of $E_3$ is still in $E_2$ (with $\widetilde{v}_2=1/2$).
\end{proof}

\begin{re}
We see that for any homogeneous polynomial $P$, the indices $\nu_2$ and $\nu_3$ for $f^*P$ are equal. Of course, these indices must not coincide for an arbitrary polynomial. A simple counterexample:  $\nu_2(x-y-z)=1$ but $\nu_3(x-y-z)=2$.
\end{re}

For any polynomial $P=P_0$, we define iteratively
$$ 
P_{n+1}=(x-y+z)^{-\nu_1(P)} z^{-\nu_3(P)}(f^*P_n).
$$ 
For the degrees, there follows the relation
\begin{equation}\label{Ex3 d n+1}
\deg(P_{n+1}) = 2\deg(P_n) - \nu_1 (P_n) - \nu_3 (P_n).
\end{equation}
For the indices we have:
\begin{lemma} \label{Ex3 lemma recur}
The indices $\nu_1,\nu_2,\nu_3$ satisfy the following relations:
\begin{eqnarray}
 \nu_1 (P_{n+1}) & = & 0,  \label{Ex3 nu1  n+1}\\
\nu_2 (P_{n+1}) & =  & \deg(P_n)-\nu_1(P_n)+\nu_2(P_n)-\nu_3(P_n), \label{Ex3 nu2  n+1} \\
\nu_3(P_{n+1}) & = & \deg(P_n)-\nu_1(P_n)+\nu_2(P_n)-\nu_3(P_n). \label{Ex3 nu3  n+1}
\end{eqnarray}
\end{lemma}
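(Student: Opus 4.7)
The strategy is to reduce everything to two ingredients: the multiplicativity $\nu_i(PQ) = \nu_i(P)+\nu_i(Q)$ of each local index (which follows immediately from Definition \ref{def index}, since composition with $\phi_i$ is a ring homomorphism and the vanishing order of $u_i$ is a valuation), together with the pull-back formulas from Lemma \ref{Ex3 lemma pullback}. Once these are in place, all three identities reduce to a direct computation of the local indices of the two contracted polynomials $(x-y+z)$ and $z$ at each of the blow-ups $\phi_1,\phi_2,\phi_3$.

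First I would tabulate the six elementary indices by plugging the blow-up charts \eqref{A E1}, \eqref{A E2}, \eqref{A E3} into the polynomials $x-y+z$ and $z$. Explicitly:
\begin{align*}
(x-y+z)\circ\phi_1 &= 1-u_1+u_1v_1=1+u_1(v_1-1), & z\circ\phi_1 &= u_1v_1,\\
(x-y+z)\circ\phi_2 &= u_2(v_2+1), & z\circ\phi_2 &= u_2,\\
(x-y+z)\circ\phi_3 &= u_3(2+u_3v_3), & z\circ\phi_3 &= u_3,
\end{align*}
which yields $\nu_1(x-y+z)=0$, $\nu_1(z)=1$, $\nu_2(x-y+z)=\nu_2(z)=\nu_3(x-y+z)=\nu_3(z)=1$. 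These are the only points where one needs to look at the geometry: the line $\{x-y+z=0\}$ does not pass through $p_1=[1:0:0]$, whereas $\{z=0\}$ does; both lines pass through $p_2=[1:1:0]$ with multiplicity one on each branch.

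Next I would apply multiplicativity of $\nu_i$ to the definition
$$
P_{n+1}=(x-y+z)^{-\nu_1(P_n)}\,z^{-\nu_3(P_n)}\,(f^*P_n),
$$
giving, for $i\in\{1,2,3\}$,
$$
\nu_i(P_{n+1})=-\nu_1(P_n)\,\nu_i(x-y+z)-\nu_3(P_n)\,\nu_i(z)+\nu_i(f^*P_n).
$$
Substituting the tabulated values and the formulas from Lemma \ref{Ex3 lemma pullback}, namely $\nu_1(f^*P_n)=\nu_3(P_n)$ and $\nu_2(f^*P_n)=\nu_3(f^*P_n)=\deg(P_n)+\nu_2(P_n)$, the three assertions \eqref{Ex3 nu1  n+1}--\eqref{Ex3 nu3  n+1} follow immediately by arithmetic.

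There is essentially no hard step here; the only subtlety worth flagging is the genericity in the verification that $\nu_2(x-y+z)=1$ (and not higher): the factor $v_2+1$ in $(x-y+z)\circ\phi_2$ is not identically zero on $\{u_2=0\}$, so Definition \ref{def index} gives exactly $1$, even though it vanishes at the particular point $(0,-1)\in E_2$ which is precisely the further indeterminacy of $f$ on $E_2$ identified in Section \ref{ss:regf}. This harmless cancellation at an isolated point plays no role for a generic $P_n$, and the recurrence is valid as stated.
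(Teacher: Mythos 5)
Your proposal is correct and follows essentially the same route as the paper: additivity of the local indices applied to the defining relation for $P_{n+1}$, the elementary index values $\nu_1(x-y+z)=0$, $\nu_1(z)=\nu_2(x-y+z)=\nu_2(z)=\nu_3(x-y+z)=\nu_3(z)=1$, and the pull-back formulas of Lemma \ref{Ex3 lemma pullback}. Your explicit tabulation of the charts and the remark about the isolated zero of $v_2+1$ at $(0,-1)\in E_2$ are harmless extra detail; nothing is missing.
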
 
\begin{proof}
We have:
$$
\nu_1(P_{n+1})=\nu_1(f^*P_n)-\nu_1(P_n)\nu_1(x-y+z)-\nu_3(P_n)\nu_1(z)=\nu_3(P_n)-\nu_3(P_n)=0,
$$
since $\nu_1(x-y+z)=0$ and $\nu_1(z)=1$. 

Analogously,
\begin{eqnarray*}
\nu_2(P_{n+1}) & = & \nu_2(f^*P_n)-\nu_1(P_n)\nu_2(x-y+z)-\nu_3(P_n)\nu_2(z)\\
 & = & \deg(P_n)+\nu_2(P_n)-\nu_1(P_n)-\nu_3(P_n).
\end{eqnarray*}
since $\nu_2(x-y+z)=1$ and $\nu_2(z)=1$.

Finally,
\begin{eqnarray*}
\nu_3(P_{n+1}) & = & \nu_3(f^*P_n)-\nu_1(P_n)\nu_3(x-y+z)-\nu_3(P_n)\nu_3(z)\\
 & = & \deg(P_n)+\nu_2(P_n)-\nu_1(P_n)-\nu_3(P_n).
\end{eqnarray*}
since $\nu_3(x-y+z)=1$ and $\nu_3(z)=1$.
\end{proof}

\begin{theo}
For any polynomial $P$, the degrees of the polynomials $P_n$ are given by
\begin{equation}\label{Ex3 d expl}
\deg(P_n)=n\deg(P_1)-(n-1)\deg(P_0).
\end{equation}
For a generic linear polynomial $P_0$ (with all $\nu_i(P_0)=0$), we have $\deg(P_0)=1$, $\deg(P_1)=2$, and $\deg(P_n)=n+1$.
\end{theo}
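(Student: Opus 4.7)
The plan is to combine the recurrences in Lemmas~\ref{Ex3 lemma pullback} and~\ref{Ex3 lemma recur} together with~\eqref{Ex3 d n+1} into a closed second-order linear recurrence for $\deg(P_n)$ alone, to solve it (since the characteristic root $\lambda=1$ has multiplicity~$2$, all solutions are arithmetic progressions), and then to match initial conditions. First I would observe that, for every $n \geq 1$, the identities \eqref{Ex3 nu1  n+1}, \eqref{Ex3 nu2  n+1}, \eqref{Ex3 nu3  n+1} force $\nu_1(P_n) = 0$ and $\nu_2(P_n) = \nu_3(P_n)$, since the right-hand sides of the latter two coincide.

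Feeding these vanishings back into the same lemma, for $n \geq 1$ the relation~\eqref{Ex3 nu3  n+1} collapses to $\nu_3(P_{n+1}) = \deg(P_n)$, while~\eqref{Ex3 d n+1} simplifies to $\deg(P_{n+1}) = 2\deg(P_n) - \nu_3(P_n)$. Eliminating $\nu_3$ between these two identities gives, for all $n \geq 2$,
\begin{equation*}
\deg(P_{n+1}) - 2\deg(P_n) + \deg(P_{n-1}) = 0,
\end{equation*}
whose general solution is an arithmetic progression. Consequently $\deg(P_n)$ is arithmetic for $n \geq 1$ with common difference $\Delta := \deg(P_2) - \deg(P_1)$, so $\deg(P_n) = \deg(P_1) + (n-1)\Delta$ for all $n \geq 1$.

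To identify this common difference with $\deg(P_1) - \deg(P_0)$, I would apply~\eqref{Ex3 d n+1} and~\eqref{Ex3 nu3  n+1} once more at $n = 0$, obtaining
\begin{equation*}
\Delta = \deg(P_1) - \nu_3(P_1) = \deg(P_1) - \deg(P_0) + \nu_1(P_0) - \nu_2(P_0) + \nu_3(P_0).
\end{equation*}
Hence the formula of the theorem reduces to the identity $\nu_2(P_0) = \nu_1(P_0) + \nu_3(P_0)$, which is automatic in the generic case singled out in the statement, where all $\nu_i(P_0)$ vanish. The final assertion then follows immediately: with $\deg(P_0) = 1$ and $\deg(P_1) = 2 \cdot 1 - 0 - 0 = 2$, one obtains $\deg(P_n) = 2n - (n-1) = n+1$, which equals $\deg(f^n)$ since $P_0$ is a generic linear form.

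The main obstacle is bookkeeping rather than substantive difficulty: one must keep track of the range of validity of each recursive identity. In particular, $\nu_3(P_n) = \deg(P_{n-1})$ holds only from $n \geq 2$ onwards, so the second-order recurrence and the resulting arithmetic progression are initially pinned down only for $n \geq 1$; extending the formula back to $n = 0$ requires the short initial-data computation sketched above, which is why the clean identity $\Delta = \deg(P_1) - \deg(P_0)$ presupposes the mild condition on the initial local indices of $P_0$ that is automatic for a generic linear form.
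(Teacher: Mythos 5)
Your argument is correct and follows essentially the same route as the paper's proof: both derive $\nu_1(P_n)=0$ and $\nu_2(P_n)=\nu_3(P_n)$ for $n\ge 1$ from Lemma \ref{Ex3 lemma recur}, collapse \eqref{Ex3 d n+1} to the second-order recurrence $\deg(P_{n+1})=2\deg(P_n)-\deg(P_{n-1})$, and solve it as an arithmetic progression. Where you go beyond the paper is in the bookkeeping of initial conditions: the paper states the recurrence only for $n\ge 2$ and then asserts that the closed formula ``follows by induction'', whereas you correctly observe that anchoring the progression at $n=0$ requires $\deg(P_2)=2\deg(P_1)-\deg(P_0)$, which by one more application of the lemmas is equivalent to $\nu_2(P_0)=\nu_1(P_0)+\nu_3(P_0)$. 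Since one always has $\nu_3(P)\ge\nu_2(P)$ (the chart $\phi_3$ refines $\phi_2$), this condition genuinely fails for certain non-generic $P_0$, for instance a curve through $p_2$ whose tangent direction there is the one blown up by $\pi_3$; so the clause ``for any polynomial $P$'' in the statement should be read with the genericity assumption that is in any case imposed in the second half of the theorem. Your proof makes this hypothesis explicit, which is a genuine, if minor, improvement in rigour over the paper's one-line induction.
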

\begin{proof}
From Lemma \ref{Ex3 lemma recur} we see that, starting with $n=1$, we will have $\nu_1(P_n)=0$, $\nu_2(P_n)=\nu_3(P_n)$, therefore, according to \eqref {Ex3 nu2  n+1}, \eqref{Ex3 nu3  n+1}: $\nu_2(P_{n+1})=\nu_3(P_{n+1})=\deg(P_n)$, and from \eqref{Ex3 d n+1}:
$$ 
\deg(P_{n+1}) = 2 \deg(P_n) - \deg(P_{n-1}), \quad n\ge 2.
$$
Now, \eqref{Ex3 d expl} follows by induction.
\end{proof}

\subsection{Degree computation based on the Picard group}

The algorithm from \cite{Birkett2022} for finding an algebraically stable modification of a given birational map, described in Section \ref{section first algorithm}, terminates in the present case after just one step. The only minimal destabilizing orbit of $f$ is $\{p_2\}$, and the lift of $f$ to the resulting blow-up surface $X=Bl_{p_2}(\mbP^2)$ is algebraically stable (although not an automorphism, because the proper transforms of the original critical lines are still contracted by $f_X$ to points). We have: $\Pic(X)=\mbZ H\oplus\mbZ \cE_2$, and one computes the following action of $f_X^*$ on $\Pic(X)$:
\begin{equation}\label{Ex3 Picard}
f_X^*: \begin{pmatrix} H \\ \cE_2 \end{pmatrix} \to \begin{pmatrix} 2H-\cE_2 \\ H \end{pmatrix}.
\end{equation}
Indeed, the pull-back of a generic line is a conic passing through $p_2$, while the pull-back of $\cE_2$ is a reducible divisor consisting of the proper transform of the line $\{z=0\}$, whose class is $H-\cE_2$, and of $\cE_2$ itself. Observe that $f_X^*$ does not preserve the intersection product.

Formula \eqref{Ex3 Picard} agrees with the recurrent formulas
\begin{align*}
\deg(P_{n+1}) & =2\deg(P_n)-\mu_2(P_n), \\
\mu_2(P_{n+1}) & =\deg(P_n),
\end{align*}
which hold for $n\ge 1$ (recall, in this case $\mu_2(P)=\nu_2(P)$). The powers of the matrix of $f_X^*$ are given by
$$
\begin{pmatrix} 2 & 1 \\ -1 & 0 \end{pmatrix}^n=\begin{pmatrix} n+1 & n \\ -n & 0 \end{pmatrix},
$$
which confirms that $\deg(f^n)=n+1$.


\section{Conclusion}

We have addressed the problem of computing the degrees of iterates of a birational map $f:\mbP^N\dasharrow\mbP^N$. To this end, we have developed a method  based on two main ingredients: 
\begin{itemize}
\item a detailed study of factorization of a polynomial under pull-back of $f$, based on local indices of a polynomial associated to blow-ups used to resolve the contraction of hypersurfaces by $f$, and 
\item a detailed study of propagation of these indices along orbits of $f$.
\end{itemize}
For maps admitting algebraically stable modifications $f_X:X\dasharrow X$, where $X$ is a variety obtained from $\mbP^N$ by a finite number of blow-ups, this method leads to an algorithm producing a finite system of recurrent equations relating the degrees and indices of iterated pull-backs of linear polynomials. This method is related, but not equivalent, to another method described in \cite{Viallet_2015,Viallet_2019}, based on the computation of the action of the pull-back of $f_X$ on the Picard group $\Pic(X)$. 

We have moreover illustrated the method by three representative two-dimensional examples. Since our method is applicable in any dimension, we will provide a number of three-dimensional examples an a separate companion paper \cite{Alonso_Suris_Wei_2023}. 
\medskip

\textbf{Acknowledgement.}
This research is supported by the DFG Collaborative Research Center TRR 109 ``Discretization in
Geometry and Dynamics''.

\bibliographystyle{siam}
\bibliography{Refs-2022}
\end{document}